\documentclass[reqno,twoside]{amsart}
\usepackage{amsmath,amssymb,colordvi}
\usepackage{multicol}
\usepackage{color,bbm}
\usepackage{epic}
\usepackage{pstricks}
\usepackage{graphicx}
\usepackage{xcolor}


\usepackage{a4wide,amsmath,amssymb,latexsym,amsthm}
\setlength{\textwidth}{16cm}
\setlength{\textheight}{20 cm}

\setcounter{secnumdepth}{2} \setcounter{section}{0}

\setcounter{MaxMatrixCols}{10}

\usepackage{graphicx}

\newtheorem{theorem}{Theorem}[section]
\newtheorem{proposition}[theorem]{Proposition}
\newtheorem{remark}[theorem]{Remark}
\newtheorem{lemma}[theorem]{Lemma}
 
\newtheorem{definition}[theorem]{Definition}

\numberwithin{equation}{section}

\graphicspath{{./Imgs/}}

\newcommand{\R}{\mathbb R}

\newcommand{\be}{\begin{equation}}
\newcommand{\ee}{\end{equation}}
\newcommand{\ba}{\begin{eqnarray}}
\newcommand{\ea}{\end{eqnarray}}
\newcommand{\beq}{\begin{equation}}
\newcommand{\eeq}{\end{equation}}

\usepackage{color}
\definecolor{red}{rgb}{0,0,0}


\usepackage[textsize=small]{todonotes}

\numberwithin{equation}{section}

\def\Omc{\R^N\setminus \Omega}

\def\Omb{\overline{\Omega}}

\def\RR{{\mathbb{R}}}

\def\Om{\Omega}

\def\bOm{\overline{\Omega}}

\usepackage{color}
\usepackage{stmaryrd}

\newenvironment{dedication}


\keywords{Fractional heat equation, Dirichlet and Robin external optimal control problems, admissible control operator, turnpike property, exponential turnpike property}
\subjclass[2010]{35R11, 35S15, 49J20, 49K20.}

\begin{document}

\title[Turnpike property]{Exponential Turnpike property for fractional  parabolic equations with non-zero exterior data}

\author{Mahamadi Warma}
\address{M. Warma, Department of Mathematical Sciences and the Center for Mathematics and Artificial Intelligence (CMAI), George Mason University, VA 22030, USA.}
\email{mwarma@gmu.edu}

\author{Sebasti\'an Zamorano}
\address{S. Zamorano, Universidad de Santiago de Chile, Facultad de Ciencia, Departamento de
Matem\'atica y Ciencia de la Computaci\'on, Casilla 307-Correo 2,
Santiago, Chile.}
 \email{sebastian.zamorano@usach.cl}

\thanks{The work of the first author is partially supported by the Air Force Office of Scientific Research (AFOSR) under Award NO [FA9550-18-1-0242] and the Army Research Office (ARO) under Award NO:  W911NF-20-1-0115. The second author is supported by the Conicyt PAI Convocatoria Nacional Subvenci\'on a la Instalaci\'on en la  Academia Convocatoria 2019 PAI77190106.}

\begin{abstract}
We consider averages convergence as the time-horizon goes to infinity of optimal solutions of time-dependent optimal control problems to optimal solutions of the corresponding stationary optimal control problems.
Control problems play a key role in engineering, economics and sciences. To be more precise, in climate sciences, often times, relevant problems are formulated in long time scales, so that, the problem of possible asymptotic behaviors when the time-horizon goes to infinity becomes natural. Assuming that the controlled dynamics under consideration are stabilizable towards a stationary solution, the following natural question arises: Do time averages of optimal controls and trajectories converge to the stationary optimal controls and states as the time-horizon goes to infinity?
This question is very closely related to the so-called turnpike property that shows that, often times, the optimal trajectory joining two points that are far apart, consists in, departing from the point of origin, rapidly getting close to the steady-state (the turnpike) to stay there most of the time, to quit it only very close to the final destination and time.
In the present paper we deal with heat equations with non-zero exterior conditions (Dirichlet and nonlocal Robin) associated with the fractional Laplace operator $(-\Delta)^s$ ($0<s<1$). We prove the turnpike property for the nonlocal Robin optimal control problem and the exponential turnpike property for both Dirichlet and nonlocal Robin optimal control problems.
\end{abstract}

\maketitle

\begin{dedication}
\begin{center}
{\em Dedicated to Professor Enrique Zuazua on the occasion of his $60^{th}$ birthday.}\\
\end{center}

Besides the controllablity properties of evolution equations, the turnpike property of optimal control problems and its applications to life science and industry are also an important part of the exceptional contributions of Enrique Zuazua in Partial Differential Equations (PDE) and  applied mathematics. He has dedicated several years of his research activities to apply these concepts to many problems of interest and has motivated young and future generations of mathematicians to develop new tools that have concrete applications in life science, industry and engineering. 

I (Mahamadi) personally met Enrique for the first time in 2008 at the University of Puerto Rico, Rio Piedras Campus (UPRRP), where he was the keynote speaker of the scientific activities of the College of Natural Sciences at UPRRP. Since then, Enrique visited the UPRRP each year in December for two weeks. In 2012, he initiated me to control theory of PDE and we interchanged on fractional PDE. Since then, we have maintained a sincere and fruitful collaboration in mathematics. We wrote several outstanding papers and now, it is even more, we are good friends. 

I visited the Basque Center of Applied Mathematics (BCAM) in Bilbao (Enrique is the founding Director) for one month in 2015, where I met for the first time Sebasti\'an who was doing a doctoral internship with Enrique (studying the turnpike property for the two-dimensional Navier-Stokes equations). This is the story between me, Enrique and Sebasti\'an. We would like to take the opportunity to put on record our sincere gratitude, appreciation and to dedicate this article in his honor. Happy birthday Enrique! \textexclamdown Feliz cumplea\~nos Enrique!  \textexclamdown Zorionak Enrique!
\end{dedication}

\section{Introduction}

In the present paper, we address the question of the limiting behavior of optimal control problems as the time-horizon goes to infinity (turnpike property) for fractional heat equations with inhomogeneous Dirichlet and nonlocal Robin exterior data.

The motivation to consider this kind of problem is clear in many contexts but in particular in climate sciences where problems are naturally formulated in long time intervals. This is for instance the case of paleoclimatology (study of past climates) where the problem of the inversion of past climates is addressed.

The concept of turnpike property of an optimal control problem for large enough time-horizon, roughly speaking, describes that the optimal nonstationary solution is made of three arcs: The first and the last being transient short time arcs, and the middle piece being a long--time arc staying exponentially close to the optimal steady-state of the associated static optimal control problem. This property was introduced a long time ago in the field of econometry for discrete time optimal control problems in finite dimension (see e.g. \cite{dorfman1958linear, mckenzie1963turnpike}). Again in econometry, the turnpike phenomena also appears in model predictive control problems (\cite{faulwasser2015design, grune2013economic}). We refer to the monographs \cite{zaslavski2006turnpike, zaslavski2019turnpike} and the references therein for a complete overview of turnpike properties for a variety of systems.

One kind of turnpike phenomena is the so--called exponential turnpike property. In this case, not only the optimal state and control, but also the corresponding adjoint vectors remain exponentially close to the stationary optimal control, state and adjoint vectors for a large enough time-horizon. In the case of finite--dimensional systems we refer to \cite{porretta2013long}, where the exponential turnpike property has been proven under the Kalman rank condition. For the nonlinear finite dimensional setting  we refer to \cite{trelat2015turnpike}. In \cite{porretta2013long}, a rigorous analysis of the extremal equations has been done for linear infinite dimensional systems under  suitable assumptions of observability, and these results have been extended to the case of semilinear heat equations in \cite{porretta2016remarks}. Both works \cite{porretta2013long, porretta2016remarks} have shown the exponential turnpike property by using the fact that the extremal equations can be decoupled (see \cite[Chapter III]{lions1971optimal}), and by employing the algebraic Riccati equation associated to this decoupling. In \cite{grune2019sensitivity} the authors proved the exponential turnpike property without using the Riccati theory, but under assumptions on stabilizability and detectability. All the above mentioned works have considered optimal control problems without terminal constrains with the exception of \cite{breiten2020onthe}, where terminal constraints habe been studied. In \cite{trelat2018steady}, the authors have investigated the turnpike property in Hilbert spaces by using general semigroups method with bounded controls and observation operators, and for boundary control parabolic equations. Besides, other contributions taking into account the turnpike analysis are contained in \cite{hernandez2019greedy, trelat2018integral, zamorano2018turnpike}.  Finally, we refer to \cite{grune2020exponential} for a turnpike analysis of general evolution equations.

Enrique Zuazua has made an exceptional contribution in the topic of turnpike properties of evolution equations and its applications to life science and industry. He has thoroughly studied the turnpike property for finite-dimensional linear and nonlinear optimal control problems,   linear and semilinear heat equations, wave equations, optimal shape design,  steady-state and  periodic exponential turnpike property of optimal control problems in Hilbert spaces.  
We refer for instance to his works \cite{esteve2020turnpike,gugat2016optimal,hernandez2019greedy,gontran2020shape,LANCE2019496,porretta2013long,porretta2016remarks,sakamoto2019turnpike,trelat2015turnpike,trelat2018steady,ZUAZUA2017199}
for more details.

Let us notice that all the mentioned works deal with the traditional approaches where the control is localized in the interior of the domain or on a part of the boundary, as long as, the control operator is bounded (\cite{grune2019sensitivity, porretta2013long, trelat2018steady}), or it is unbounded but with an admissible control (\cite{grune2020exponential}). However, in many real life applications the control can be located outside the domain where the PDE is satisfied, which is not surprising, since in many cases we do not have a direct access to the boundary of the domain. This can occur in the following situations (but not limited to):  Acoustic testing, when the loudspeakers are placed far from the aerospace structures \cite{larkin1999direct};  Magnetotellurics (MT), which is a technique to infer earth's subsurface electrical conductivity from surface measurements \cite{unsworth2005new, CWeiss_BvBWaanders_HAntil_2018a};  Magnetic Drug Targeting (MDT), where drugs with ferromagnetic particles in suspension are injected into the body and the external magnetic field is then used to steer the drug to relevant areas, for example, solid tumors \cite{HAntil_RHNochetto_PVenegas_2018a, HAntil_RHNochetto_PVenegas_2018b,lubbe1996clinical}; and Electroencephalography (EEG) is used to record electrical activities in brain \cite{niedermeyer2005electroencephalography,williams1974electroencephalography}, 
in case one accounts for the neurons disjoint from the brain, one will obtain an external control problem. We also refer to \cite{antil2019external, antil2020external}  for other relevant applications.

Recently, in \cite{antil2019external, antil2020external} the authors introduced the notion of external optimal control problems with elliptic and parabolic space--fractional PDE as constraints. They considered a nonlocal diffusion operator such as the  fractional Laplacian $(-\Delta)^s$, with $0<s<1$, which allows to replace the classical boundary condition by an exterior datum. 


In this work we are interested to the  turnpike property for fractional parabolic equations with Dirichlet and nonlocal Robin type external controls. We will give a complete analysis of the relationship between the optimal solution of the following time--dependent exterior optimal control problem:
\begin{align}\label{eqmain1}
\min_{g\in U} J^T(g):=\frac{1}{2}\int_0^T\|u-u^d\|_{L^2(\Om)}^2dt + \frac{1}{2}\int_0^T\|g(\cdot,t)\|_{L^2(\Omc,\mu)}^2 dt,
\end{align}
subject to the constraints that $u$ solves the fractional heat equation
\begin{align}\label{eqmain1-1}
\begin{cases}
u_t+(-\Delta)^s u=0 & \text{ in }\Omega\times(0,T),\\
\mathcal{B}u=\beta g & \text{ in } (\Omc)\times(0,T),\\
u(\cdot, 0) =0 & \text{ in }\Omega,
\end{cases}
\end{align}
and the corresponding stationary problem, that is, 
\begin{align}\label{eqmain2}
\min_{g\in \mathcal U} J(g) :=\frac{1}{2}\|u-u^d\|_{L^2(\Omega)}^2 + \frac{1}{2}\|g\|_{L^2(\Omc,\mu)}^2,
\end{align}
subject to the constraints that  $u$ solves the fractional elliptic equation
\begin{align}\label{eqmain2-1}
\begin{cases}
(-\Delta)^s u=0 & \text{ in }\Omega,\\
\mathcal{B} u =  \beta g & \text{ in }\Omc.
\end{cases}
\end{align}
Here, $\Omega\subset\RR^N$ is a bounded open set with a Lipschitz continuous boundary $\partial\Omega$, $T>0$ is a real number,  and $u^d\in L^2(\Omega)$ is a fixed target. 
 In addition, $0<s<1$ and $(-\Delta)^s$ denotes the fractional Laplace operator given formally for a smooth function $u$ by the following singular integral:
\begin{align*}
(-\Delta)^s u(x):=C_{N,s} \text{P.V.}\int_{\R^N} \frac{u(x)-u(y)}{|x-y|^{N+2s}}dy, \quad x\in \R^N,
\end{align*}
where $C_{N,s}$ is a normalization constant. We  refer to Section \ref{sec-2} for the precise definition. 

\begin{itemize}
\item In the case of the Dirichlet problem, the operator $\mathcal{B}$ is given by $\mathcal{B}u=u$, $\beta=1$ and the function $g\in L^2((0,T);L^2(\Omc))$. For the time-dependent problem \eqref{eqmain1} the Banach space $U:=L^2((0,T);L^2(\Omc))$ and $\mathcal U:=L^2(\Omc)$ in \eqref{eqmain2}. 

\item  For the nonlocal Robin problem, $\mathcal{B}u=\mathcal{N}_su +\beta u$ , where $\mathcal{N}_s$ is the nonlocal normal derivative of $u$ (see Section \ref{sec-2})  and  $\beta\in L^1(\Omc)$ is a given non--negative function. In that case,  $U:=L^2((0,T);L^2(\Omc,\mu))$ and $\mathcal U:=L^2(\Omc,\mu)$, where the measure $\mu$ on $\Omc$ is defined by $d\mu:=\beta dx$ with $dx$ being the usual $N$-dimensional Lebesgue measure.
\end{itemize}

It has been recently shown in \cite{antil2020external} that the problem \eqref{eqmain1}-\eqref{eqmain1-1}  is well-posed in the sense that,  there exist one optimal pair $(g^T,u^T)\in L^2((0,T);L^2(\Omc,\mu))\times\Big( L^2((0,T); H_{\Omega,\beta}^s(\Omega))\cap H^1((0,T); (H_{\Omega,\beta}^s)^{*})\Big)$, for the Robin conditon, and $(g^T,u^T)\in L^2((0,T);L^2(\Omc))\times L^2((0,T);L^2(\Omega))$ for the Dirichlet condition, which are the optimal solutions of the minimization problems. Similarly, by \cite{antil2019external} the minimization problems \eqref{eqmain2}-\eqref{eqmain2-1} have one solution $(\overline{g},\overline u)\in L^2(\Omc,\mu)\times H_{\Omega,\beta}^s(\Omega)$ for the Robin problem and one solution $(\overline{g},\overline u)\in L^2(\Omc)\times L^2(\Omega)$ for the Dirichlet problem. We refer to Section \ref{sec-2} for the definition of the involved spaces and to Sections \ref{sec-4} and \ref{sec-5} for more details.

The main concern of the present article is to investigate if there exists any connection between the optimal pairs $(g^T,u^T)$ and $(\overline{g},\overline{u})$, when the time-horizon $T$ is sufficiently large.

To the best of our knowledge, it is the first time that the turnpike property is studied for the fractional Laplace operator with non-zero Dirichlet and/or nonlocal Robin exterior data.

The key difficulties and novelties of the present paper can be summarized as follows:
\begin{enumerate}
\item[1.]  From the definition of the fractional Laplacian $(-\Delta)^s$, we easily see that it is a nonlocal operator. That is, in order to evaluation $(-\Delta)^su(x)$ at a point $x$, it is necessary to have  information on $u$ over the whole space $\R^N$. Besides,  contrary to the local case of the Laplace operator, $(-\Delta)^s u$ may be nonsmooth even if the function $u$ is smooth (see e.g. \cite{RS-DP}).

\item[2.] Since we are considering exterior data $g\in L^2$, the associated Dirichlet problems (stationary and time-dependent) only admit solutions by transposition (very weak solutions) which are not smooth enough. In addition, for both parabolic problems (Dirichlet and Robin exterior data), one cannot use directly semigroups method to show the existence of solutions (since the involved operator is in general not a generator of a semigroup).


\item[3.] To obtain the turnpike property for the Dirichlet problem, it is necessary to use the notion of admissible control and observation operators (see e.g. \cite{tucsnak2009observation,tucsnak2014well} for the local case). For the state problem with Dirichlet exterior conditions, since we are considered exterior data only in $L^2$, with this concept, we will exploit semigroups theory to prove the existence and uniqueness of solutions to the state equation. In addition, it will help to obtain an extra regularity in time for the optimal state but losing the space--regularity, that is, $u^T\in C([0,T]; H^{-s}(\Omega))\cap L^2(\Om\times(0,T))$. This continuity property of solutions is necessary to obtain the exponential turnpike property.  As far as we know, this is the first article dealing with these concepts in the case of fractional evolution equations.

\item[4.] For the Robin problem, we have shown the convergence of solutions of the finite horizon control problems in $(0,T)$ to their corresponding steady state versions as the time-horizon $T$ tends to infinity (turnpike property). We have also obtained the exponential turnpike property.

\item[5.] In the case of the Dirichlet exterior control problem, using the concept of admissible control and observation operators  we have established the exponential turnpike property for the corresponding systems (state, control and adjoint vectors). 
\end{enumerate}

The rest of the paper is organized as follows. In Section \ref{sec-2} we introduce the function spaces needed to study our problems and give some intermediate known results that are needed in the proof of our main results. Section \ref{sec-4} contains some recent known results on the Robin optimal control problems.
 Section \ref{subsec4} is devoted to the proof of the main results for the Robin problem, that is, the turnpike and exponential turnpike properties. These results are contained in Theorems \ref{mainresult1} and \ref{mainresult2}, respectively. 
 Section \ref{sec-5} contains some known results for the Dirichlet control problem. In Section \ref{sec42} we rewrite this problem as an abstract Cauchy problem by using the notion of admissible control operators. 
 Finally, in Section \ref{subsec-6} we prove the exponential turnpike property of the Dirichlet optimal control problem, namely, Theorem \ref{mainresult3}.

\section{Preliminary results}\label{sec-2}
In this section we give some notations and recall some known results as they are needed  throughout the paper.
We start with fractional order Sobolev spaces. 

For $0<s<1$ a real number and $\Omega\subset\RR$ an arbitrary open set,  we let
\begin{align*}
H^{s}(\Omega):=\left\{u\in L^2(\Omega):\;\int_\Omega\int_\Omega\frac{|u(x)-u(y)|^2}{|x-y|^{N+2s}}\;dxdy<\infty\right\},
\end{align*}
and we endow it with the norm defined by
\begin{align*}
\|u\|_{H^{s}(\Omega)}:=\left(\int_\Omega|u(x)|^2\;dx+\int_\Omega\int_\Omega\frac{|u(x)-u(y)|^2}{|x-y|^{N+2s}}\;dxdy\right)^{\frac 12}.
\end{align*}
We set
\begin{align*}
H_0^{s}(\Omega):=\Big\{u\in H^{s}(\R^N):\;u=0\;\mbox{ a.e. in }\;\R^N\setminus \Omega\Big\}.
\end{align*}
We notice that if $\Omega$ is bounded and $0<s\ne 1/2<1$, then $H_0^s(\Omega)=\overline{\mathcal D(\Omega)}^{H^1(\Omega)}$, where $\mathcal D(\Omega)$ denotes the space of all continuously infinitely differentiable functions with compact support in $\Omega$.

We shall denote by $ H^{-s}(\Omega):=( H_0^{s}(\Omega))^\star$ the dual space of $H_0^{s}(\Omega)$ with respect to the pivot space $L^2(\Om)$,  so that we have the following continuous embeddings: $$H_0^{s}(\Omega) \hookrightarrow L^2(\Omega)\hookrightarrow H^{-s}(\Omega).$$

For more information on fractional order Sobolev spaces, we refer to \cite{NPV,Gris,War} and their references.

Next, we give a rigorous definition of the fractional Laplace operator. To do this, we need the following function space:
\begin{align*}
\mathcal L_s^{1}(\R^N):=\left\{u:\R^N\to\R\;\mbox{ measurable and }\; \int_{\R^N}\frac{|u(x)|}{(1+|x|)^{N+2s}}\;dx<\infty\right\}.
\end{align*}
For $u\in \mathcal L_s^{1}(\R^N)$ and $\varepsilon>0$ we set
\begin{align*}
(-\Delta)_\varepsilon^s u(x):= C_{N,s}\int_{\{y\in\R^N:\;|x-y|>\varepsilon\}}\frac{u(x)-u(y)}{|x-y|^{N+2s}}\;dy,\;\;x\in\R^N.
\end{align*}
Here, $C_{N,s}$ is a normalization constant given by
\begin{align}\label{CNs}
C_{N,s}:=\frac{s2^{2s}\Gamma\left(\frac{2s+N}{2}\right)}{\pi^{\frac{N}{2}}\Gamma(1-s)},
\end{align}
where $\Gamma$ is the usual Gamma function.

The {\em fractional Laplacian}  $(-\Delta)^su$ is defined for $u\in \mathcal L_s^{1}(\R^N)$ by the following singular integral:
\begin{align}\label{fl_def}
(-\Delta)^su(x):=C_{s}\,\mbox{P.V.}\int_{\R^N}\frac{u(x)-u(y)}{|x-y|^{N+2s}}\;dy=\lim_{\varepsilon\downarrow 0}(-\Delta)_\varepsilon^s u(x),\;\;x\in\R^N,
\end{align}
provided that the limit exists for a.e. $x\in\R^N$. 
For more details on the fractional Laplace operator we refer to \cite{Caf3,NPV,GW-CPDE,War} and their references.

Assume that $\Omega\subset\R^N$ is a bounded open set with a Lipschitz continuous boundary $\partial\Omega$.

We consider the realization of $(-\Delta)^s$ in $L^2(\Omega)$ with a Dirichlet exterior condition $u=0$ in $\Omc$. More precisely, we consider 
the selfadjoint operator $(-\Delta)_D^s$ on $L^2(\Omega)$ given by
\begin{equation}\label{DLO}
D((-\Delta)_D^s):=\Big\{u\in H_0^{s}(\Om):\; (-\Delta)^su\in L^2(\Omega)\Big\},\;\;\;
(-\Delta)_D^su:=((-\Delta)^su)|_{\Omega}.
\end{equation}
It is well-know (see e.g. \cite{C-Wa}) that the operator $-(-\Delta)_D^s$ generates a strongly continuous submarkovian semigroup (positivity-preserving and $L^\infty$-contractive) $(e^{-t(-\Delta)_D^s})_{t\ge 0}$ on $L^2(\Omega)$.

For $\beta\in L^1(\Omc)$ a given non-negative function, we denote by $H_{\Omega,\beta}^s$ the following space:
\begin{align*}
H_{\Omega,\beta}^s:=\Big\{u:\R^N\to \R \text{ measurable and }  \|u\|_{H_{\Omega,\beta}^s}<\infty\Big\},
\end{align*}
where 
\begin{align}\label{normspace}
\|u\|_{H_{\Omega,\beta}^s}:=\left(\|u\|_{L^2(\Omega)}^2+\|\beta^{1/2}u\|_{L^2(\Omc)}^2+\int\int_{\R^{2N}\setminus(\R^N\setminus\Omega)^2}\frac{|u(x)-u(y)|^2}{|x-y|^{N+2s}}dxdy\right)^{\frac{1}{2}},
\end{align}
 and 
\begin{align*}
\R^{2N}\setminus(\R^{2N}\setminus\Omega)^2=(\Omega\times\Omega)\cup((\Omc)\times\Omega)\cup(\Omega\times(\Omc)).
\end{align*}

Let $\mu$ be the measure on $\Omc$ given by $d\mu=\beta dx$. Then, the norm \eqref{normspace} can be rewritten as
\begin{align}\label{normspace2}
\|u\|_{H_{\Omega,\beta}^s}=\left(\|u\|_{L^2(\Omega)}^2+\|u\|_{L^2(\Omc,\mu )}^2+\int\int_{\R^{2N}\setminus(\R^N\setminus\Omega)^2}\frac{|u(x)-u(y)|^2}{|x-y|^{N+2s}}dxdy\right)^{\frac{1}{2}}.
\end{align}

When $\beta=0$, we shall let $H_{\Omega}^s:=H_{\Omega,0}^s$.  

It has been shown in \cite[Proposition 3.1]{DRV} that for every $\beta\in L^1(\Omc)$, $H_{\Omega,\beta}^s$ is a Hilbert space. We shall denote by $(H_{\Omega,\beta}^s)^{*}$ the dual space of $H_{\Omega,\beta}^s$.

Next, for $u\in H_{\Omega}^s$ we introduce the {\em nonlocal normal derivative $\mathcal N_su$} of $u$ defined by 
\begin{align}\label{NLND}
\mathcal N_su(x):=C_{N,s}\int_{\Omega}\frac{u(x)-u(y)}{|x-y|^{N+2s}}\;dy,\;\;\;x\in\R^N \setminus\bOm,
\end{align}
where $C_{N,s}$ is the constant given in \eqref{CNs}.
We notice that since equality is to be understood a.e., we have that \eqref{NLND} is the same as  a.e.  in $\Omc$.
By \cite[Lemma 3.2]{GSU},  for every $u\in H_{\Omega}^s$, we have that $\mathcal N_su\in H_{\rm loc}^s(\Omc)$. In addition, if $(-\Delta)^su\in L^2(\Omega)$, then $\mathcal N_su\in L^2(\Omc)$.
The operator $\mathcal N_s$ has been called "interaction operator" in  \cite{antil2019external,QDu_MGunzburger_RBLehoucq_KZhou_2013a}. Several properties of $\mathcal N_s$ have been studied in \cite{C-Wa,DRV}. 

%


We have the following integration by parts formula.

\begin{lemma}
Let $u\in H_{\Omega}^s$ be such that $(-\Delta)^s u\in L^2(\Omega)$ and $\mathcal N_su\in L^2(\Omc)$. Then for every $v\in H^s(\RR^N)$, the identity
\begin{align}\label{Int-Part}
\frac{C_{N,s}}{2}\int\int_{\R^{2N}\setminus(\R^N\setminus\Omega)^2}&\frac{(u(x)-u(y))(v(x)-v(y))}{|x-y|^{N+2s}}\;dxdy\notag\\
=&\int_{\Omega}v(x)(-\Delta)^su(x)\;dx+\int_{\Omc}v(x)\mathcal N_su(x)\;dx,
\end{align}
holds.
\end{lemma}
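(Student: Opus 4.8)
The plan is to evaluate the symmetric bilinear form on the left-hand side by splitting its domain according to the decomposition $\RR^{2N}\setminus(\Omc)^2=(\Om\times\Om)\cup((\Omc)\times\Om)\cup(\Om\times(\Omc))$ recorded in the text, and then to match each block against the defining formulas \eqref{fl_def} of $(-\Delta)^su$ and \eqref{NLND} of $\mathcal N_su$. Before doing so, I would note that all the objects involved are well defined in the appropriate sense: since $v\in H^s(\RR^N)$ and $u\in H_\Om^s$, the Cauchy--Schwarz inequality applied to the pair $|u(x)-u(y)|\,|x-y|^{-(N+2s)/2}$ and $|v(x)-v(y)|\,|x-y|^{-(N+2s)/2}$ shows that the left-hand side is an absolutely convergent double integral over $\RR^{2N}\setminus(\Omc)^2$, while the hypotheses $(-\Delta)^su\in L^2(\Om)$ and $\mathcal N_su\in L^2(\Omc)$, together with $v\in L^2$, make both terms on the right finite. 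This is what legitimizes the use of Fubini's theorem on truncated regions and the eventual passage to the limit.

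Writing $K(x,y)=\frac{(u(x)-u(y))(v(x)-v(y))}{|x-y|^{N+2s}}$, the symmetry of $K$ in $x\leftrightarrow y$ makes the two cross blocks contribute equally, so the left-hand side equals $\frac{C_{N,s}}{2}\iint_{\Om\times\Om}K+C_{N,s}\iint_{\Om\times\Omc}K$. For the diagonal block I would symmetrize: expanding $(u(x)-u(y))(v(x)-v(y))$ and using symmetry collapses the four resulting terms to $2v(x)(u(x)-u(y))$, which gives $\frac{C_{N,s}}{2}\iint_{\Om\times\Om}K=C_{N,s}\int_\Om v(x)\,\mathrm{P.V.}\!\int_\Om\frac{u(x)-u(y)}{|x-y|^{N+2s}}\,dy\,dx$; here the principal value is treated through the $\varepsilon$-truncations $(-\Delta)^s_\varepsilon$, and it is precisely the standing assumptions $u\in\mathcal L^1_s(\RR^N)$ and $(-\Delta)^su\in L^2(\Om)$ that guarantee the limit exists. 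For the cross block, with $x\in\Om$ and $y\in\Omc$, I would split $(u(x)-u(y))(v(x)-v(y))=v(x)(u(x)-u(y))-v(y)(u(x)-u(y))$. The first summand yields $C_{N,s}\int_\Om v(x)\int_{\Omc}\frac{u(x)-u(y)}{|x-y|^{N+2s}}\,dy\,dx$, which is exactly the exterior part of $\int_\Om v(-\Delta)^su$. In the second summand, rewriting $u(x)-u(y)=-(u(y)-u(x))$ and using $|x-y|=|y-x|$ together with \eqref{NLND}, I recognize $C_{N,s}\int_\Om\frac{u(y)-u(x)}{|x-y|^{N+2s}}\,dx=\mathcal N_su(y)$ for $y\in\Omc$, so this summand produces $\int_{\Omc}v(y)\,\mathcal N_su(y)\,dy$.

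It then remains to assemble the pieces. The interior principal-value integral over $\Om$ coming from the diagonal block and the exterior integral over $\Omc$ coming from the first cross summand recombine, by the definition \eqref{fl_def}, into the full principal value $C_{N,s}\,\mathrm{P.V.}\!\int_{\RR^N}\frac{u(x)-u(y)}{|x-y|^{N+2s}}\,dy=(-\Delta)^su(x)$ for $x\in\Om$; integrated against $v$ over $\Om$ this gives $\int_\Om v\,(-\Delta)^su\,dx$, and the leftover cross contribution is $\int_{\Omc}v\,\mathcal N_su\,dx$, which is the asserted identity.

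The main obstacle is the rigorous handling of the singular kernel, namely justifying Fubini's theorem and the separation of the principal-value integral into summands that need not be individually absolutely convergent near the diagonal $\{x=y\}$ and near the shared boundary $\partial\Om$. I expect to discharge this by carrying the truncated kernels throughout (excising an $\varepsilon$-collar of the diagonal), applying Fubini only where the integrals are absolutely convergent, and letting $\varepsilon\downarrow0$ at the end; the $H^s$-Cauchy--Schwarz bound furnishes the dominating function for dominated convergence, while the $L^2$-regularity of $(-\Delta)^su$ and $\mathcal N_su$ guarantees that the truncated interior and exterior integrals converge to the intended limits.
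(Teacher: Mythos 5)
Your proposal is correct and follows essentially the same route as the paper, which does not prove the lemma inline but defers to \cite[Lemma 3.3]{DRV} (and \cite[Proposition 3.7]{War-ACE}), where the argument is precisely this decomposition of $\R^{2N}\setminus(\R^N\setminus\Omega)^2$ into $(\Omega\times\Omega)\cup(\Omega\times(\Omc))\cup((\Omc)\times\Omega)$, symmetrization of the diagonal block, and identification of the cross terms with the exterior part of $(-\Delta)^su$ and with $\mathcal N_su$ via \eqref{fl_def} and \eqref{NLND}. Your handling of the technical points (absolute convergence of the left-hand side by Cauchy--Schwarz, working with the $\varepsilon$-truncated kernels and passing to the limit) is the standard way to make that computation rigorous under the stated $L^2$ hypotheses, so no gap remains.
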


We refer to \cite[Lemma 3.3]{DRV} (see also \cite[Proposition 3.7]{War-ACE}) for the proof and more details.

We mention that if $u\in H_0^s(\Omega)$ or $v\in H_0^s(\Omega)$, then
\begin{align*}
\int\int_{\R^{2N}\setminus(\R^N\setminus\Omega)^2}\frac{(u(x)-u(y))(v(x)-v(y))}{|x-y|^{N+2s}}\;dxdy=\int_{\R^N}\int_{\R^{N}}\frac{(u(x)-u(y))(v(x)-v(y))}{|x-y|^{N+2s}}\;dxdy.
\end{align*}

Throughout the remainder of the article, for $u,v\in H_{\Omega,\beta}^s$, we shall denote
\begin{align}\label{bilinear}
\mathcal E(u,v):=\frac{C_{N,s}}{2}\int\int_{\R^{2N}\setminus(\R^{2N}\setminus\Omega)^2}\frac{(u(x)-u(y))(v(x)-v(y))}{|x-y|^{N+2s}}dxdy+\int_{\Omc} \beta uv dx,
\end{align}
where $\beta\in L^1(\Omc)$ is a given non-negative function.

We observe that the form $\mathcal E$ is bilinear and continuous.

Next, we consider the following fractional elliptic problem:
\begin{align}\label{NonlocalPDE}
\begin{cases}
(-\Delta)^s u =f & \text{ in }\Omega,\\
\mathcal{N}_s u +  \beta u= \beta g & \text{ in }\Omc,
\end{cases}
\end{align}
where $\mathcal{N}_s$ is the nonlocal normal derivative introduced in \eqref{NLND}.

Let $g\in L^2(\Omc,\mu)$ and $f\in (H_{\Omega,\beta}^s)^{*}$. We say that a function $u\in H_{\Omega,\beta}^s$ is a weak solution of \eqref{NonlocalPDE} if the identity 
\begin{align}\label{weaksol2}
\mathcal E(u,v)=\langle f,v\rangle_{(H_{\Omega,\beta}^s)^{*},H_{\Omega,\beta}^s} +\int_{\Omc}  gv d\mu,
\end{align}
holds, for every $v\in H_{\Omega,\beta}^s$, where we recall that $\mathcal E$ is given in \eqref{bilinear}.

Using the classical Lax-Milgram lemma, it is easy to show that, for every $g\in L^2(\Omc,\mu)$ and $f\in (H_{\Omega,\beta}^s)^{*}$, there exists a unique weak solution $u\in H_{\Omega,\beta}^s$ of \eqref{NonlocalPDE}.


We conclude this section by introducing the realization in $L^2(\Omega)$ of $(-\Delta)^s$ with the nonlocal Robin exterior condition.

For a function $u\in L^2(\Om)$ we define its extension \(u_R\) as follows:
\begin{align*}
    u_R(x):=\begin{cases} u(x) &\text{ if } x \in \Omega, \\
 \displaystyle   \frac{C_{n,s}}{C_{N,s}\rho(x)+\beta(x)}\int_\Omega \frac{u(y)}{\vert x-y\vert^{N+2s}} dy &\text{ if } x\in \R^N\setminus \Omb,
    \end{cases}
\end{align*}
where the function $\rho$ is given by
\begin{align*}
\rho(x):=\int_{\Om}\frac{1}{|x-y|^{N+2s}}\;dy,\;\;x\in\RR^N\setminus \Omb.
\end{align*}
Then, $u_R$ is well defined for every $u\in L^2(\Omega)$.

Let $u\in H_\Om^{s}$. It has been shown in \cite{C-Wa} that  $u_R$ satisfies the following nonlcoal Robin exterior condition:
\begin{align}\label{RC}
\mathcal N_su_R+\beta u_R=0\;\;\mbox{ in }\;\R^N\setminus \Omb.
\end{align}

Let 
\begin{align*} 
    D(\mathcal E_R):=\Big\{ u \in L^2(\Om):\;u_R \in H^{s}_{\Omega,\beta}\Big \},
\end{align*}
and  \(\mathcal E_R :D(\mathcal E_R)\times D(\mathcal E_R) \rightarrow \mathbb{R}\) be given by
\begin{align*} 
    \mathcal E_R(u,v):=\frac{C_{N,s}}{2}\int\int_{\R^{2N}\setminus(\R^N\setminus\Omega)^2}\frac{(u_R(x)-u_R(y))(v_R(x)-v_R(y))}{|x-y|^{N+2s}}\;dxdy+\int_{\Omc} \beta u_R v_R\;dx.
\end{align*}
Then,  \(\mathcal E_R\) is a closed, symmetric and densely defined bilinear form on $L^2(\Om)$.  The  selfadjoint operator $(-\Delta)_R^s$ on $L^2(\Om)$ associated with \(\mathcal E_R\) is given by 
\begin{equation} \label{op-R}
\begin{cases}
\displaystyle D((-\Delta)_R^s):=
   \Big\{ u \in L^2(\Omega):
    u_R \in H^{s}_{\beta,\Omega}\;\;\exists\; f\in L^2(\Omega)\mbox{ such that }\; u_R \mbox{ is a weak solution of } \eqref{NonlocalPDE} \\
    \hfill\text{ with right hand side } f \;\mbox{ and }\; g=0\Big\},\\
    (-\Delta)_R^s u:= f.
    \end{cases}
\end{equation}
Here also, we have that the operator $-(-\Delta)_R^s$ generates a strongly continuous submarkovian semigroup $(e^{-t(-\Delta)_R^s})_{t\ge 0}$ on $L^2(\Omega)$.
We refer to \cite{C-Wa} and their references for more information, details and qualitative properties of the operator  $(-\Delta)_R^s$.

\section{ Robin exterior control problems: The turnpike property}\label{sec4}

In this section we state and prove our main results concerning the turnpike property of the optimal control problems for  nonlocal Robin exterior data. In order to do this we need some preparations.

Throughout the following, without any mention, $\Omega\subset\R^N$ is a bounded open set with a Lipschitz continuous boundary $\partial\Omega$, $\beta\in L^1(\Omc)$ is a given non-negative function and the measure $\mu$ on $\Omc$ is given by $d\mu=\beta dx$. Given $T>0$, we denote $Q:=\Omega\times(0,T)$ and $\Sigma:=(\Omc)\times(0,T)$. Given a Banach space $\mathbb X$ and its dual $\mathbb X^\star$, we denote their duality pairing by $\langle\cdot,\cdot\rangle_{\mathbb X^\star,\mathbb X}$. If $\mathcal H$ is a Hilbert space, we  denote by $(\cdot,\cdot)_{\mathcal H}$ the scalar product in $\mathcal H$. If $X$ and $Y$ are two Banach spaces and $T:X\to Y$ is a bounded operator, we let $\|\cdot\|_{\mathcal L(X,Y)}$ ($\|\cdot\|_{\mathcal L(X)}$ if $X=Y$)  be the operator norm of $T$. Recall that the bilinear form $\mathcal E$ is given by
\begin{align*} 
    \mathcal E(u,v):=&\frac{C_{N,s}}{2}\int\int_{\R^{2N}\setminus(\R^N\setminus\Omega)^2}\frac{(u(x)-u(y))(v(x)-v(y))}{|x-y|^{N+2s}}\;dxdy+\int_{\Omc} \beta u v\;dx,
\end{align*}
for $u,v\in H^s_{\Omega,\beta}$. Also, $(-\Delta)_R^s$ denotes the operator defined in \eqref{op-R}.

%


\subsection{Nonlocal Robin optimal control problems}\label{sec-4}
 We consider the following nonlocal heat equation with nonlocal Robin exterior conditions:
\begin{align}\label{ocpevol1}
\begin{cases}
u_t+(-\Delta)^s u =0& \text{ in }Q,\\
\mathcal{N}_s u +  \beta u= \beta g & \text{ in }\Sigma,\\
u(\cdot, 0)= 0 &\text{ in } \Omega,
\end{cases}
\end{align}
where $\mathcal{N}_s u$ is the nonlocal normal derivative introduced in \eqref{NLND}.




Our notion of solutions to \eqref{ocpevol1} is the following.

\begin{definition}\label{defweaksol}
Let $g\in L^2((0,T);L^2(\Omc,\mu))$. We say that a function $u\in L^2((0,T); H_{\Omega,\beta}^s)\cap H^1((0,T);(H_{\Omega,
\beta}^s)^{*})$ is a weak solution of \eqref{ocpevol1} if the identity,
\begin{align}\label{weaksol}
\langle u_t, v\rangle_{(H_{\Omega,\beta}^s)^{*},H_{\Omega,\beta}^s}  +\mathcal E(u,v)=\int_{\Omc} gv d\mu,
\end{align}
holds, for every $v\in H_{\Omega,\beta}^s$ and almost every $t\in(0,T)$.
\end{definition}

We have the following existence result taken from \cite[Theorem 3.10]{antil2020external}.

\begin{theorem}\label{existencesolution}
For every $g\in L^2((0,T);L^2(\Omc,\mu))$, there exists a unique weak solution $u$ of \eqref{ocpevol1} in the sense of Definition \ref{defweaksol}. 
\end{theorem}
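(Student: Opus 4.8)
The plan is to read \eqref{weaksol} as an abstract linear parabolic Cauchy problem governed by the bounded bilinear form $\mathcal E$ and to solve it by the variational (Faedo--Galerkin) method. First I would fix the Gelfand triple
\[
H_{\Omega,\beta}^s \hookrightarrow H \hookrightarrow (H_{\Omega,\beta}^s)^{*},\qquad H:=L^2(\Omega)\oplus L^2(\Omc,\mu),
\]
where $H$ carries the norm $\|u\|_H^2:=\|u\|_{L^2(\Omega)}^2+\|u\|_{L^2(\Omc,\mu)}^2$. By the very definition \eqref{normspace2} of $\|\cdot\|_{H_{\Omega,\beta}^s}$ the embedding $H_{\Omega,\beta}^s\hookrightarrow H$ is continuous with $\|u\|_H\le\|u\|_{H_{\Omega,\beta}^s}$; note that this pivot must include the exterior part, since $H_{\Omega,\beta}^s$ does not embed injectively into $L^2(\Omega)$ alone. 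Identifying $H$ with its own dual then makes the triple well defined and renders the duality pairing in \eqref{weaksol} consistent.

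Next I would record the two analytic ingredients needed to apply the abstract theory. On the one hand, $\mathcal E$ is bounded on $H_{\Omega,\beta}^s\times H_{\Omega,\beta}^s$ (already observed after \eqref{bilinear}) and satisfies the G\aa rding inequality
\[
\mathcal E(u,u)+\|u\|_H^2\ \ge\ \alpha\,\|u\|_{H_{\Omega,\beta}^s}^2,\qquad \alpha:=\min\Big\{\tfrac{C_{N,s}}{2},\,1\Big\},
\]
which follows immediately by adding $\|u\|_{L^2(\Omega)}^2$ to $\mathcal E(u,u)$ and comparing term by term with \eqref{normspace2}. On the other hand, for a.e. $t$ the map $v\mapsto\int_{\Omc}g(\cdot,t)v\,d\mu$ defines an element $\ell(t)\in(H_{\Omega,\beta}^s)^{*}$ with $\|\ell(t)\|_{(H_{\Omega,\beta}^s)^{*}}\le\|g(\cdot,t)\|_{L^2(\Omc,\mu)}$, whence $\ell\in L^2((0,T);(H_{\Omega,\beta}^s)^{*})$ since $g\in L^2((0,T);L^2(\Omc,\mu))$.

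With these in hand, existence follows from the standard Lions theorem for parabolic problems with a G\aa rding-coercive form. I would either invoke it directly or run the Galerkin scheme on the orthonormal basis of $H$ given by the eigenfunctions of the self-adjoint operator associated with $\mathcal E$: testing the finite-dimensional system against the approximation $u_m$ and combining the G\aa rding inequality with Gr\"onwall's lemma yields uniform bounds for $u_m$ in $L^2((0,T);H_{\Omega,\beta}^s)\cap L^\infty((0,T);H)$, and comparing in the equation bounds $u_m'$ in $L^2((0,T);(H_{\Omega,\beta}^s)^{*})$. Passing to a weak limit produces $u\in L^2((0,T);H_{\Omega,\beta}^s)\cap H^1((0,T);(H_{\Omega,\beta}^s)^{*})$ satisfying \eqref{weaksol}; as this space embeds continuously into $C([0,T];H)$, the initial condition $u(\cdot,0)=0$ is attained in $H$. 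Uniqueness comes from taking the difference $w$ of two solutions, testing \eqref{weaksol} with $w$, applying the Lions--Magenes energy identity $\tfrac12\frac{d}{dt}\|w\|_H^2=\langle w',w\rangle_{(H_{\Omega,\beta}^s)^{*},H_{\Omega,\beta}^s}$ together with the G\aa rding inequality, and concluding $w\equiv 0$ from $w(0)=0$ via Gr\"onwall.

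The step I expect to demand the most care is the justification of the energy identity and of the continuity $L^2((0,T);H_{\Omega,\beta}^s)\cap H^1((0,T);(H_{\Omega,\beta}^s)^{*})\hookrightarrow C([0,T];H)$, which underpins both the attainment of the initial datum and the uniqueness argument: it rests on the Lions--Magenes integration-by-parts lemma and hence on the density and continuity of $H_{\Omega,\beta}^s$ in the chosen pivot $H$, a point that is genuinely delicate for the measure $d\mu=\beta\,dx$ with merely $\beta\in L^1(\Omc)$ nonnegative. Everything else is routine once the Gelfand triple is fixed; in particular, since $\mathcal E$ is symmetric and nonnegative, one may alternatively bypass Galerkin and obtain $u$ directly by maximal parabolic regularity for the analytic semigroup generated by the self-adjoint operator associated with $\mathcal E$ on $H$.
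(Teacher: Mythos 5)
Your G\aa rding inequality, the bound $\|\ell(t)\|_{(H_{\Omega,\beta}^s)^{*}}\le\|g(\cdot,t)\|_{L^2(\Omc,\mu)}$, and the general Lions--Galerkin strategy are all sound, but there is a genuine gap at the very first step: the choice of pivot space changes the equation being solved. (Note that the paper itself gives no proof of Theorem \ref{existencesolution}; it imports it from \cite[Theorem 3.10]{antil2020external}, so your attempt has to be judged on its own merits.) In the weak formulation \eqref{weaksol} the time derivative is meant to act only through the interior scalar product $(u_t,v)_{L^2(\Omega)}$: multiplying \eqref{ocpevol1} by $v\in H_{\Omega,\beta}^s$ and using the integration by parts formula \eqref{Int-Part} together with $\mathcal N_s u+\beta u=\beta g$ produces $(u_t,v)_{L^2(\Omega)}+\mathcal E(u,v)=\int_{\Omc}\beta g v\,dx$, with no time derivative in the exterior condition; the system is parabolic in $\Omega$ and quasi-static in $\Omc$. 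This reading is confirmed by the paper's own usage: the adjoint formulation \eqref{main4} carries the pairing $(\psi_t^T,v)_{L^2(\Omega)}$, and Step 2 of the proof of Theorem \ref{mainresult1} uses $\int_0^T\big(\langle u_t^T,\psi^T\rangle+(\psi_t^T,u^T)_{L^2(\Omega)}\big)dt=0$, which only holds if $\langle u_t,\cdot\rangle$ extends the interior $L^2(\Omega)$ pairing. With your pivot $H=L^2(\Omega)\oplus L^2(\Omc,\mu)$, Lions' theorem instead delivers $u$ with $(u_t,v)_{L^2(\Omega)}+\int_{\Omc}\beta u_t v\,dx+\mathcal E(u,v)=\int_{\Omc}\beta g v\,dx$; testing with $v$ supported in $\Omc$ shows this $u$ satisfies the \emph{dynamic} exterior condition $\beta u_t+\mathcal N_s u+\beta u=\beta g$ in $\Omc$, not the static Robin condition of \eqref{ocpevol1}. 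Symptomatically, your embedding into $C([0,T];H)$ forces $u(\cdot,0)=0$ also in $L^2(\Omc,\mu)$, whereas the true solution's exterior trace jumps at $t=0^+$ to the quasi-static profile determined by $g$ (solving $\mathcal N_s u+\beta u=\beta g$ pointwise with small interior datum gives $u\approx \beta g/(C_{N,s}\rho+\beta)$ in $\Omc$, which is not small for general $g$).

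Moreover, the point you flagged as delicate is an actual obstruction, not a technicality: the embedding $H_{\Omega,\beta}^s\hookrightarrow H$ fails to be injective whenever $\beta$ vanishes on a set of positive measure in $\Omc$ (take $u$ supported in $\{\beta=0\}\subset\Omc$: its $H$-norm is zero while the interaction seminorm in \eqref{normspace} is positive), and the paper only assumes $0\le\beta\in L^1(\Omc)$. So even your modified (dynamic) problem does not fit the standard Gelfand-triple framework, and the ``self-adjoint operator associated with $\mathcal E$ on $H$'' invoked in your semigroup alternative need not exist --- this is precisely why the paper warns that semigroup methods do not apply directly. The correct variational structure is the degenerate implicit evolution $\frac{d}{dt}\mathcal B u+\mathcal A u=\ell$ with $\langle\mathcal B u,v\rangle=(u,v)_{L^2(\Omega)}$, where $\mathcal B$ is nonnegative and non-injective; this is handled by the theory of degenerate parabolic equations (Showalter) or by a Galerkin scheme in which only the interior components are differentiated in time, or alternatively by lifting $g$ through the stationary Robin solution map and using the semigroup generated by $-(-\Delta)_R^s$ on $L^2(\Omega)$. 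Your a priori estimates survive in that framework, but uniqueness then needs one extra step: Gr\"onwall only yields $w=0$ in $L^2(\Omega)$ and $\mu$-a.e.\ in $\Omc$, and one must further use $\mathcal E(w,w)=0$ together with the positivity of the interaction kernel on $\Omega\times\Omc$ to conclude $w=0$ in $H_{\Omega,\beta}^s$.
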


%
%
%
%

Next, let us consider the time-dependent optimal control problem  and the corresponding stationary one. Namely, we consider the minimization problems:
\begin{align}\label{ocpevol}
\min_{g\in L^2((0,T); L^2(\Omc,\mu))} J^T(g):=\frac{1}{2}\int_0^T\|u(\cdot,t)-u^d\|_{L^2(\Om)}^2dt + \frac{1}{2}\int_0^T\|g(\cdot,t)\|_{L^2(\Omc,\mu)}^2 dt,
\end{align}
subject to $u\in L^2((0,T);H_{\Omega,\beta}^s)\cap H^1((0,T); (H_{\Omega,\beta}^s)^{*})$ solves the fractional heat equation \eqref{ocpevol1},
and
\begin{align}\label{ocpest}
\min_{g\in L^2(\Omc,\mu )}J(g):=\frac{1}{2}\|u-u^d\|_{L^2(\Omega)}^2 + \frac{1}{2}\|g\|_{L^2(\Omc,\mu)}^2 ,
\end{align}
subject to $u\in H_{\Omega,\beta}^s$ solves the elliptic problem
\begin{align}\label{ocpest1}
\begin{cases}
(-\Delta)^s u=0 & \text{ in }\Omega,\\
\mathcal{N}_s u +\beta  u =\beta  g& \text{ in }\Omc,
\end{cases}
\end{align}
where $u^d\in L^2(\Omega)$ is a fixed target.

We have the following well--posedness results concerning problems \eqref{ocpevol1}-\eqref{ocpevol} and \eqref{ocpest}-\eqref{ocpest1}.

 
\begin{theorem}{\cite[Theorem 4.1]{antil2020external}}\label{Robin-evol}
There exist a unique optimal control $g^T\in L^2((0,T);L^2(\Omc,\mu))$ and a state $u^T\in L^2((0,T);H_{\Omega,\beta}^s)\cap H^1((0,T);(H_{\Omega,\beta}^s)^{*})$ such that the functional $J^T$ attains its minimum at $g^T$, and $u^T$ is the corresponding unique solution of \eqref{ocpevol1} with exterior datum $g^T$.
\end{theorem}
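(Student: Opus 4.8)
The plan is to prove existence and uniqueness by the direct method of the calculus of variations, leveraging the fact that, by Theorem \ref{existencesolution}, the control-to-state map is single-valued and (by linearity of the dynamics) linear. First I would set $U:=L^2((0,T);L^2(\Omc,\mu))$ and introduce the control-to-state operator $\Lambda\colon g\mapsto u(g)$, where $u(g)$ is the unique weak solution of \eqref{ocpevol1} in the sense of Definition \ref{defweaksol}. Since \eqref{ocpevol1} has zero interior source and zero initial datum, $\Lambda$ is linear from $U$ into $L^2((0,T);H_{\Omega,\beta}^s)\cap H^1((0,T);(H_{\Omega,\beta}^s)^{*})$. The quantitative input I need is the energy estimate
\[
\norme{u}_{L^2((0,T);H_{\Omega,\beta}^s)}+\norme{u_t}_{L^2((0,T);(H_{\Omega,\beta}^s)^{*})}\le C\,\norme{g}_{U},
\]
which I would derive by testing \eqref{weaksol} with $v=u(\cdot,t)$. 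Here $\langle u_t,u\rangle=\frac12\frac{d}{dt}\norme{u}_{L^2(\Om)}^2$, and because $\mathcal E(u,u)=\norme{u}_{H_{\Omega,\beta}^s}^2-\norme{u}_{L^2(\Om)}^2$, the form is coercive on $H_{\Omega,\beta}^s$ up to the lower-order $L^2(\Om)$ term; estimating the right-hand side by $\int_{\Omc}gu\,d\mu\le \frac12\norme{u}_{H_{\Omega,\beta}^s}^2+\frac12\norme{g}_{L^2(\Omc,\mu)}^2$ and applying Gronwall's inequality yields control of $\norme{u}_{L^2((0,T);H_{\Omega,\beta}^s)}$, after which the bound on $u_t$ follows directly from the equation. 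In particular $\Lambda$ is bounded, hence weakly continuous.

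Second, I would record that $J^T$ is coercive, strictly convex, and weakly lower semicontinuous on the Hilbert space $U$. Coercivity is immediate from $J^T(g)\ge \frac12\int_0^T\norme{g(\cdot,t)}_{L^2(\Omc,\mu)}^2\,dt=\frac12\norme{g}_U^2$, so $\inf_U J^T$ is finite and every minimizing sequence is bounded in $U$. For convexity, note that $g\mapsto \frac12\int_0^T\norme{g}_{L^2(\Omc,\mu)}^2\,dt$ is strictly convex, while $g\mapsto \frac12\int_0^T\norme{\Lambda g-u^d}_{L^2(\Om)}^2\,dt$ is the composition of the affine map $g\mapsto \Lambda g-u^d$ with a convex functional, hence convex; the sum is therefore strictly convex. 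Being convex and continuous on a Hilbert space, $J^T$ is weakly lower semicontinuous.

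Third, for existence I would take a minimizing sequence $(g_n)\subset U$, which is bounded, and extract a subsequence with $g_n\rightharpoonup g^T$ weakly in $U$. By the a priori estimate the states $u_n=\Lambda g_n$ are bounded in $L^2((0,T);H_{\Omega,\beta}^s)\cap H^1((0,T);(H_{\Omega,\beta}^s)^{*})$, and passing to the weak limit in the linear identity \eqref{weaksol} identifies the weak limit as $u^T:=\Lambda g^T$. Weak lower semicontinuity then gives $J^T(g^T)\le\liminf_n J^T(g_n)=\inf_U J^T$, so $(g^T,u^T)$ is an optimal pair; uniqueness of the minimizer is an immediate consequence of the strict convexity established above, and uniqueness of the state follows from Theorem \ref{existencesolution}.

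The arguments are standard, so there is no genuinely hard step; the only points requiring care are the derivation of the energy estimate — specifically handling the non-coercivity of $\mathcal E$ with respect to the full $H_{\Omega,\beta}^s$-norm through the Gronwall argument — and the justification that weak convergence of the controls transfers to weak convergence of the states by passing to the limit in \eqref{weaksol}. Alternatively, since $J^T$ is a strictly convex, continuous, coercive functional on a reflexive Banach space, existence and uniqueness of the minimizer could be quoted from the standard abstract minimization theorem, with the work reduced to verifying continuity and coercivity of $J^T\circ\Lambda$.
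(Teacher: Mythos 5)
Your argument is correct in outline, but note first that the paper contains no proof of this statement to compare against: Theorem \ref{Robin-evol} is imported verbatim from \cite[Theorem 4.1]{antil2020external}, with the proof living in that reference. Your direct-method proof (reduced functional $J^T\circ\Lambda$, coercivity from the control term, strict convexity from the quadratic control cost plus convexity of the affine tracking term, weak lower semicontinuity, weak compactness of a minimizing sequence, passage to the limit in the linear identity \eqref{weaksol}, uniqueness of the state from Theorem \ref{existencesolution}) is the standard route for such linear-quadratic problems and is essentially the one the cited reference follows, so as a self-contained substitute it is sound. Two points deserve correction, both in your energy estimate. First, the identity $\mathcal E(u,u)=\norme{u}_{H_{\Omega,\beta}^s}^2-\norme{u}_{L^2(\Om)}^2$ is not exact: comparing \eqref{bilinear} with \eqref{normspace2} one only gets the equivalence
\[
\min\Bigl(\tfrac{C_{N,s}}{2},1\Bigr)\Bigl(\norme{u}_{H_{\Omega,\beta}^s}^2-\norme{u}_{L^2(\Omega)}^2\Bigr)\;\le\;\mathcal E(u,u)\;\le\;\max\Bigl(\tfrac{C_{N,s}}{2},1\Bigr)\Bigl(\norme{u}_{H_{\Omega,\beta}^s}^2-\norme{u}_{L^2(\Omega)}^2\Bigr),
\]
and consequently your absorption $\int_{\Omc}gu\,d\mu\le\frac12\norme{u}_{H_{\Omega,\beta}^s}^2+\frac12\norme{g}_{L^2(\Omc,\mu)}^2$ with fixed weight $\frac12$ need not be absorbable by $\mathcal E(u,u)$ when $C_{N,s}<2$; the robust version is Young's inequality with a free parameter combined with the elementary bound $\norme{u}_{L^2(\Omc,\mu)}^2\le\mathcal E(u,u)$, i.e. $\int_{\Omc}gu\,d\mu\le\frac{1}{2\varepsilon}\norme{g}_{L^2(\Omc,\mu)}^2+\frac{\varepsilon}{2}\,\mathcal E(u,u)$ with $\varepsilon<2$, exactly as in Step 1 of the proof of Theorem \ref{mainresult1}, after which Gronwall handles the lower-order term. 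Second, when you test with $v=u(\cdot,t)$ and write $\langle u_t,u\rangle=\frac12\frac{d}{dt}\norme{u}_{L^2(\Om)}^2$, you are implicitly fixing the pivot space of the Gelfand triple $H_{\Omega,\beta}^s\hookrightarrow\mathcal H\hookrightarrow(H_{\Omega,\beta}^s)^{*}$; since elements of $H_{\Omega,\beta}^s$ are functions on all of $\R^N$, the natural pivot carries the norm $\norme{u}_{L^2(\Omega)}^2+\norme{u}_{L^2(\Omc,\mu)}^2$, and this identification should be stated (the paper itself is loose on this point). Neither issue breaks the proof — the Gronwall scheme and the direct method go through with either pivot and with the $\varepsilon$-weighted absorption — but as written the constants do not close in general.
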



\begin{theorem}{\cite[Theorem 5.1]{antil2019external}}\label{Robin-sta}
There exist a unique optimal control  $\overline{g}\in L^2(\Omc,\mu)$ and  $\overline{u}\in H_{\Omega,\beta}^s$ a solution of \eqref{ocpest1} associated to $\overline{g}$, such that the functional $J$ attains its minimum at $\overline{g}$.
\end{theorem}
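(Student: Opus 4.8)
The plan is to apply the direct method of the calculus of variations, exploiting the Hilbert-space structure of the admissible set $L^2(\Omc,\mu)$ together with the strict convexity of the cost. The first step is to record that the state equation \eqref{ocpest1} is exactly \eqref{NonlocalPDE} with right-hand side $f=0$, so by the Lax--Milgram argument already established after \eqref{weaksol2} the control-to-state map $S:L^2(\Omc,\mu)\to H_{\Omega,\beta}^s$, $g\mapsto u$, is well defined, and since $f=0$ it is in fact linear. Testing \eqref{weaksol2} with $v=u$ and using the coercivity of $\mathcal E$ together with the Cauchy--Schwarz inequality yields $\|Sg\|_{H_{\Omega,\beta}^s}\le C\|g\|_{L^2(\Omc,\mu)}$, so $S$ is bounded. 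Composing with the continuous embedding $H_{\Omega,\beta}^s\hookrightarrow L^2(\Omega)$, immediate from the norm \eqref{normspace2} since there $\|u\|_{L^2(\Omega)}\le\|u\|_{H_{\Omega,\beta}^s}$, I may rewrite the reduced cost as $J(g)=\tfrac12\|Sg-u^d\|_{L^2(\Omega)}^2+\tfrac12\|g\|_{L^2(\Omc,\mu)}^2$, a functional of $g$ alone.

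Next I would verify the three properties that drive the direct method. Continuity of $J$ is clear from boundedness of $S$. Coercivity follows from $J(g)\ge\tfrac12\|g\|_{L^2(\Omc,\mu)}^2$, so $J(g)\to+\infty$ as $\|g\|_{L^2(\Omc,\mu)}\to\infty$. Strict convexity follows because $g\mapsto Sg-u^d$ is affine, hence $g\mapsto\|Sg-u^d\|_{L^2(\Omega)}^2$ is convex, while the regularization term $g\mapsto\tfrac12\|g\|_{L^2(\Omc,\mu)}^2$ is strictly convex.

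Then comes the existence argument: take a minimizing sequence $(g_n)\subset L^2(\Omc,\mu)$ with $J(g_n)\to\inf J$. Coercivity bounds $(g_n)$ in the Hilbert space $L^2(\Omc,\mu)$, so after passing to a subsequence $g_n\rightharpoonup\overline g$ weakly. Because $S$ is linear and bounded it is weak-to-weak continuous, whence $u_n:=Sg_n\rightharpoonup S\overline g=:\overline u$ in $H_{\Omega,\beta}^s$, and in particular in $L^2(\Omega)$. Since $J$ is convex and continuous it is weakly sequentially lower semicontinuous, so $J(\overline g)\le\liminf_n J(g_n)=\inf J$, which makes $\overline g$ a minimizer and $\overline u$ the associated state solving \eqref{ocpest1}.

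Finally, uniqueness is immediate from strict convexity: if $\overline g_1\neq\overline g_2$ were two minimizers, evaluating $J$ at their midpoint would give a strictly smaller value, a contradiction; uniqueness of $\overline u$ then follows from well-posedness of the state equation. The only genuinely technical point, and the one I would treat with most care, is the weak continuity of the control-to-state map together with the weak lower semicontinuity of the tracking term; both reduce to the linearity and boundedness of $S$, so no compactness beyond reflexivity of $L^2(\Omc,\mu)$ is in fact required.
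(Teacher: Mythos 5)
Your proposal is correct and matches the intended proof: the paper states this theorem purely as a citation of \cite[Theorem 5.1]{antil2019external} without reproducing an argument, and the proof in that reference is exactly the direct method you describe --- reduction of the cost via the linear, bounded control-to-state map $S:L^2(\Omc,\mu)\to H^s_{\Omega,\beta}$, coercivity from the Tikhonov term, strict convexity of the quadratic functional, and weak sequential lower semicontinuity in the reflexive space $L^2(\Omc,\mu)$. The one ingredient you should keep explicitly tied to the paper is the coercivity of $\mathcal E$ underlying the well-posedness of the state equation (a nonlocal Poincar\'e-type inequality involving $\beta$), which the paper simply asserts via Lax--Milgram after \eqref{weaksol2} and which you correctly invoke rather than reprove.
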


Additionally, we have the following first order optimality conditions for both optimal control problems. 


\begin{theorem}{\cite[Theorem 4.3]{antil2020external}}\label{antil}
If $g^T$ is a minimizer of \eqref{ocpevol}, then the first order necessary optimality conditions are given by
\begin{align}\label{firstorderevol}
\Big(\psi^T+ g^T, g-g^T\Big)_{L^2((0,T);L^2(\Omc,\mu))}\geq 0,\quad \forall g\in L^2((0,T);L^2(\Omc,\mu)),
\end{align}
where $\psi^T\in L^2((0,T);D(-\Delta)_R^s)\cap H^1((0,T);L^2(\Omega))$ solves the following adjoint problem:
\begin{align}\label{ocpevol2}
\begin{cases}
-\psi_t^T+(-\Delta)^s \psi^T=u^T-u^d & \text{ in }Q,\\
\mathcal{N}_s \psi^T +\beta  \psi^T = 0 & \text{ in }\Sigma,\\
\psi(\cdot,T) =0 & \text{ in }\Omega.
\end{cases}
\end{align}
Moreover, \eqref{firstorderevol} is equivalent to
\begin{align*}
g^T=-\psi^T\Big|_{\Sigma}.
\end{align*}
\end{theorem}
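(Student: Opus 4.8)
The plan is to exploit that, by linearity of \eqref{ocpevol1} together with the zero initial datum, the control-to-state map $g\mapsto u$ is linear, so that $J^T$ is a convex quadratic functional on the Hilbert space $L^2((0,T);L^2(\Omc,\mu))$ and is Gâteaux differentiable. First I would fix the minimizer $g^T$ with associated state $u^T$, take an arbitrary competitor $g$, and set $h:=g-g^T$. Denoting by $z$ the solution of \eqref{ocpevol1} with exterior datum $h$ and zero initial datum (which, by linearity, is exactly the directional derivative of the state), differentiating $\lambda\mapsto J^T(g^T+\lambda h)$ at $\lambda=0^+$ and using that $\lambda=0$ is a minimizer of this convex scalar map on $[0,1]$ yields
\begin{align*}
\int_0^T (u^T-u^d,z)_{L^2(\Om)}\,dt + \int_0^T (g^T,h)_{L^2(\Omc,\mu)}\,dt \ge 0 .
\end{align*}

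The heart of the argument is to rewrite the tracking term through the adjoint state. Let $\psi^T$ solve the backward problem \eqref{ocpevol2}; since $u^T-u^d\in L^2(Q)$ and $-(-\Delta)_R^s$ generates the (submarkovian, hence analytic) semigroup on $L^2(\Om)$ recalled in Section \ref{sec-2}, standard parabolic regularity gives existence, uniqueness, and the regularity $\psi^T\in L^2((0,T);D((-\Delta)_R^s))\cap H^1((0,T);L^2(\Om))$. Writing $(-\Delta)^s\psi^T=u^T-u^d+\psi_t^T$ in $\Om$ and invoking the integration by parts formula \eqref{Int-Part}, the symmetry of $\mathcal E$, and the exterior conditions $\mathcal N_s\psi^T+\beta\psi^T=0$ and $\mathcal N_s z+\beta z=\beta h$, together with $(-\Delta)^s z=-z_t$, I would obtain
\begin{align*}
\int_0^T (u^T-u^d,z)_{L^2(\Om)}\,dt = -\int_0^T\frac{d}{dt}\Big(\int_\Om \psi^T z\,dx\Big)dt + \int_0^T (\psi^T,h)_{L^2(\Omc,\mu)}\,dt .
\end{align*}
The time-boundary term vanishes because $\psi^T(\cdot,T)=0$ and $z(\cdot,0)=0$, leaving $\int_0^T (u^T-u^d,z)_{L^2(\Om)}\,dt=\int_0^T (\psi^T,h)_{L^2(\Omc,\mu)}\,dt$. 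Substituting this into the variational inequality above produces exactly \eqref{firstorderevol}.

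Finally, since the minimization in \eqref{ocpevol} is unconstrained, i.e. $g$ ranges over all of $L^2((0,T);L^2(\Omc,\mu))$, I would deduce the pointwise characterization by testing \eqref{firstorderevol} with the admissible choice $g=g^T-(\psi^T|_\Sigma+g^T)$, which forces $\|\psi^T|_\Sigma+g^T\|^2\le 0$ and hence $g^T=-\psi^T|_\Sigma$; the converse implication is immediate. I expect the main obstacle to be the rigorous justification of the two integrations by parts rather than their formal content: the product rule $\frac{d}{dt}(\psi^T,z)_{L^2(\Om)}=\langle \psi_t^T,z\rangle+\langle z_t,\psi^T\rangle$ must be validated in the duality between $H^s_{\Omega,\beta}$ and $(H^s_{\Omega,\beta})^*$ (a Lions--Magenes type density argument exploiting $z\in L^2((0,T);H^s_{\Omega,\beta})\cap H^1((0,T);(H^s_{\Omega,\beta})^*)$ and the stronger regularity of $\psi^T$), and the nonlocal formula \eqref{Int-Part} must be applied with $\psi^T(\cdot,t)\in D((-\Delta)_R^s)$ for a.e. $t$, so that $(-\Delta)^s\psi^T(\cdot,t)\in L^2(\Om)$ and $\mathcal N_s\psi^T(\cdot,t)\in L^2(\Omc)$ as required for the hypotheses of that formula.
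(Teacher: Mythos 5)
Your proposal is correct, and it is the classical Lions-type argument: differentiate the reduced convex quadratic functional along the linear control-to-state map, rewrite the tracking term via the backward adjoint state through a duality/integration-by-parts computation in which the time-boundary terms vanish because $\psi^T(\cdot,T)=0$ and $z(\cdot,0)=0$, and then use unconstrainedness of the minimization to upgrade the variational inequality to $g^T=-\psi^T|_{\Sigma}$. Note, however, that this paper does not prove the statement at all: it is imported from \cite[Theorem 4.3]{antil2020external}, with the adjoint regularity taken from \cite{BWZ-P}; your argument is the standard proof of that cited result, and the same duality computation appears in the paper itself in Step 2 of the proof of Theorem \ref{mainresult1}, where the weak formulations \eqref{main3}--\eqref{main4} are paired against each other and the cross terms cancel after integration in time. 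The one imprecision is your justification of the regularity $\psi^T\in L^2((0,T);D((-\Delta)_R^s))\cap H^1((0,T);L^2(\Omega))$: submarkovianity does not imply analyticity; what yields maximal $L^2$-regularity here is that $(-\Delta)_R^s$ is self-adjoint and non-negative (being associated with the closed symmetric form $\mathcal E_R$), so its semigroup is analytic --- or one simply cites \cite{BWZ-P}, as the paper does.
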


Let us notice that the regularity $\psi^T\in L^2((0,T);D(-\Delta)_R^s)\cap H^1((0,T);L^2(\Omega))$ of solutions to \eqref{ocpevol2} has been proved in \cite{BWZ-P}.

\begin{theorem}{\cite[Theorem 5.3]{antil2019external}}\label{antil2}
If $\overline{g}$ is a minimizer of \eqref{ocpest}, then the first order necessary optimality conditions are given by
\begin{align}\label{firstorderest}
\Big(\overline{\psi}+ \overline{g}, g-\overline{g}\Big)_{L^2(\Omc,\mu)}\geq 0,\quad \forall g\in L^2(\Omc,\mu),
\end{align}
where $\overline{\psi}\in H_{\Omega,\beta}^s$ solves the following adjoint problem:
\begin{align}\label{ocpest2}
\begin{cases}
(-\Delta)^s \overline{\psi}=\overline{u}-u^d & \text{ in }\Omega,\\
\mathcal{N}_s \overline{\psi} + \beta \overline{\psi} = 0 & \text{ in }\Omc.
\end{cases}
\end{align}
Moreover, \eqref{firstorderest} is equivalent to
\begin{align*}
\overline{g}=-\overline{\psi}\Big|_{\Omc}.
\end{align*}
\end{theorem}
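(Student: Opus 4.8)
The plan is to follow the standard adjoint (Lagrangian) approach for a convex quadratic control problem. First I would record that the control-to-state map is well behaved: since the elliptic problem \eqref{ocpest1} has zero interior source and depends on $g$ only through the exterior Robin datum $\beta g$, the Lax--Milgram argument recalled after \eqref{weaksol2} shows that $S\colon L^2(\Omc,\mu)\to H_{\Omega,\beta}^s$, $g\mapsto u$, is a bounded \emph{linear} operator with $S(0)=0$. Consequently $J(g)=\tfrac12\|Sg-u^d\|_{L^2(\Omega)}^2+\tfrac12\|g\|_{L^2(\Omc,\mu)}^2$ is a strictly convex, Fr\'echet differentiable quadratic functional on $L^2(\Omc,\mu)$, whose unique minimizer $\overline g$ (Theorem \ref{Robin-sta}) is characterized by the first order condition $J'(\overline g)[g-\overline g]\ge 0$ for all admissible $g$.

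Next I would compute the derivative. A direct calculation gives, for any direction $h\in L^2(\Omc,\mu)$,
\[
J'(\overline g)[h]=\big(\overline u-u^d,\,Sh\big)_{L^2(\Omega)}+\big(\overline g,\,h\big)_{L^2(\Omc,\mu)},
\]
where $\overline u=S\overline g$ and $w:=Sh\in H_{\Omega,\beta}^s$ solves \eqref{ocpest1} with datum $h$ and zero source. The crux is to eliminate the implicit dependence on $Sh$ through the adjoint state $\overline\psi$ defined by \eqref{ocpest2}. Writing the weak formulations in the sense of \eqref{weaksol2}, the adjoint $\overline\psi$ satisfies $\mathcal E(\overline\psi,v)=(\overline u-u^d,v)_{L^2(\Omega)}$ for all $v\in H_{\Omega,\beta}^s$ (its exterior datum is $g=0$), while $w$ satisfies $\mathcal E(w,v)=(h,v)_{L^2(\Omc,\mu)}$ for all $v\in H_{\Omega,\beta}^s$ (its interior source is $f=0$). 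Choosing $v=w$ in the first identity and $v=\overline\psi$ in the second, and invoking the symmetry of the bilinear form $\mathcal E$ from \eqref{bilinear}, I obtain
\[
\big(\overline u-u^d,\,w\big)_{L^2(\Omega)}=\mathcal E(\overline\psi,w)=\mathcal E(w,\overline\psi)=\big(h,\,\overline\psi\big)_{L^2(\Omc,\mu)}.
\]
Substituting back yields $J'(\overline g)[h]=\big(\overline\psi+\overline g,\,h\big)_{L^2(\Omc,\mu)}$, and taking $h=g-\overline g$ gives exactly \eqref{firstorderest}.

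Finally, the equivalence with $\overline g=-\overline\psi|_{\Omc}$ follows because the admissible set is the whole Hilbert space $L^2(\Omc,\mu)$: testing the variational inequality with $g=\overline g\pm\phi$ for arbitrary $\phi\in L^2(\Omc,\mu)$ forces $(\overline\psi+\overline g,\phi)_{L^2(\Omc,\mu)}=0$ for all $\phi$, hence $\overline g+\overline\psi=0$ a.e. on $\Omc$.

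The main obstacle is the adjoint duality identity in the displayed chain of equalities. It rests on two facts that must be checked carefully: the symmetry of $\mathcal E$ (immediate from \eqref{bilinear}), and the admissibility of $w$ and $\overline\psi$ as mutual test functions. Both belong to $H_{\Omega,\beta}^s$ by the regularity of weak solutions supplied by the Lax--Milgram theory, so the pairings are legitimate and the transfer of the $L^2(\Omega)$ misfit into an $L^2(\Omc,\mu)$ pairing against $h$ is rigorous. Everything else (convexity, differentiability, and the unconstrained reduction of the variational inequality to an equality) is routine.
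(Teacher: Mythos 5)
The paper itself contains no proof of this statement: it is quoted verbatim from \cite[Theorem 5.3]{antil2019external}, so there is no internal argument to compare against, and your proposal is exactly the standard adjoint computation used in that reference — linearity and boundedness of the control-to-state map from the Lax--Milgram well-posedness recalled after \eqref{weaksol2}, the symmetry of $\mathcal E$ from \eqref{bilinear} to trade $\big(\overline u-u^d,\,Sh\big)_{L^2(\Omega)}$ for $\big(\overline\psi,\,h\big)_{L^2(\Omc,\mu)}$, and the collapse of the unconstrained variational inequality to the equality $\overline g=-\overline\psi\big|_{\Omc}$. Your argument is correct as written; the only pedantic caveat is that the final identity holds $\mu$-a.e.\ (i.e.\ a.e.\ on the set where $\beta>0$, since $d\mu=\beta\,dx$), which is precisely the sense in which the theorem's conclusion in $L^2(\Omc,\mu)$ is meant.
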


To conclude this section, we mention that the solution $\overline{\psi}$ of \eqref{ocpest2} also belongs to $D((-\Delta)_R^s)$.

\subsection{The turnpike property}\label{subsec4}

In this section we state and prove our main results concerning the  nonlocal Robin exterior data. 

For this purpose, let $(g^T,u^T)\in L^2((0,T);L^2(\Omc,\mu))\times \Big(L^2((0,T);H_{\Omega,\beta}^s)\cap H^1((0,T);(H_{\Omega,\beta}^s)^{*})\Big)$  and $(\overline{g},\overline{u})\in L^2(\Omc,\mu)\times H_{\Omega,\beta}^s$ be the optimal pairs for the evolutionary and stationary optimal control problems \eqref{ocpevol1}-\eqref{ocpevol} and 
\eqref{ocpest}-\eqref{ocpest1}, respectively (see Theorems \ref{Robin-evol} and \ref{Robin-sta}). It follows from Theorems \ref{antil} and \ref{antil2} that there exists a pair
\begin{align*}
(\psi^T,\overline{\psi})\in \Big(L^2((0,T);D((-\Delta)_R^s))\cap H^1((0,T);L^2(\Omega))\Big) \times H_{\Om,\beta}^s
\end{align*}
 such that $g^T=-\psi^T\Big|_{\Sigma}$, $\overline{g}=-\overline{\psi}\Big|_{\Omc}$, and we have the following optimality systems:
\begin{align}\label{main1}
\begin{cases}
u_t^T+(-\Delta)^s u^T=0 & \text{ in }Q,\\
\mathcal{N}_s u ^T+ \beta u^T =\beta g^T & \text{ in }\Sigma,\\
u^T(\cdot, 0) =0 & \text{ in }\Omega,\\
-\psi_t^T+(-\Delta)^s \psi^T=u^T-u^d & \text{ in }Q,\\
\mathcal{N}_s \psi^T +\beta \psi^T = 0 & \text{ in }\Sigma,\\
\psi(\cdot,T) =0 & \text{ in }\Omega,
\end{cases}
\end{align}
and
\begin{align}\label{main2}
\begin{cases}
(-\Delta)^s \overline{u}=0 & \text{ in }\Omega,\\
\mathcal{N}_s \overline{u} + \beta\overline{u} = \beta\overline{g} & \text{ in }\Omc,\\
(-\Delta)^s \overline{\psi}=\overline{u}-u^d & \text{ in }\Omega,\\
\mathcal{N}_s \overline{\psi} + \beta \overline{\psi} = 0 & \text{ in }\Omc.
\end{cases}
\end{align}

The following theorem is the first main result of the paper.

\begin{theorem}\label{mainresult1}
Let $(u^T, g^T, \psi^T)$ be the solution of \eqref{main1}  and $(\overline{u},\overline{g}, \overline{\psi})$ be the solution of the corresponding stationary problem \eqref{main2}. Then,
\begin{align*}
\frac{1}{T}\int_0^T g^T\;dt \quad \longrightarrow \quad \overline{g}, &\quad \text{ strongly in } L^2(\Omc,\mu)\text{ as }T\to +\infty,
\end{align*}
and
\begin{align*}
\frac{1}{T}\int_0^T u^T\;dt \quad \longrightarrow \quad \overline{u}, &\quad \text{ strongly in } L^2(\Omega)\text{ as }T\to +\infty.
\end{align*}
\end{theorem}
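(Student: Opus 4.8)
The plan is to follow the dissipativity (energy) method for turnpike, working directly with the differences between the evolutionary and stationary optimality systems rather than passing through weak limits and a uniqueness argument. I set $y:=u^T-\overline u$ and $q:=\psi^T-\overline\psi$. Subtracting \eqref{main2} from \eqref{main1} and using the optimality relations $g^T=-\psi^T|_\Sigma$ and $\overline g=-\overline\psi|_{\Omc}$ (so that $g^T-\overline g=-q|_{\Omc}$), one checks that $y$ solves the forward Robin heat equation with exterior datum $g^T-\overline g$ and initial value $y(0)=-\overline u$, while $q$ solves the backward Robin equation with interior source $y$ and terminal value $q(T)=-\overline\psi$. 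Since both the state space $L^2(0,T;H_{\Omega,\beta}^s)\cap H^1(0,T;(H_{\Omega,\beta}^s)^{*})$ and the adjoint space $L^2(0,T;D((-\Delta)_R^s))\cap H^1(0,T;L^2(\Omega))$ embed continuously into $C([0,T];L^2(\Omega))$, the pointwise traces $y(0),y(T),q(0),q(T)$ are well defined in $L^2(\Omega)$.

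The core computation is a cross-energy identity. Testing the weak form of the $y$-equation with $q$ and that of the $q$-equation with $y$, using the symmetry of $\mathcal E$ and the product rule $\frac{d}{dt}(y,q)_{L^2(\Omega)}=\langle y_t,q\rangle_{(H_{\Omega,\beta}^s)^{*},H_{\Omega,\beta}^s}+(q_t,y)_{L^2(\Omega)}$ (legitimate thanks to the extra regularity of $\psi^T$ from \cite{BWZ-P}), the bilinear terms cancel and one obtains $\frac{d}{dt}(y,q)_{L^2(\Omega)}=-\|q\|_{L^2(\Omc,\mu)}^2-\|y\|_{L^2(\Omega)}^2$. Integrating over $(0,T)$ and inserting the boundary values gives the key balance
\[
\int_0^T\|g^T-\overline g\|_{L^2(\Omc,\mu)}^2\,dt+\int_0^T\|u^T-\overline u\|_{L^2(\Omega)}^2\,dt=(u^T(T),\overline\psi)_{L^2(\Omega)}-(\overline u,\psi^T(0))_{L^2(\Omega)}.
\]

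It then remains to bound the right-hand side. First, the a priori bound $J^T(g^T)\le J^T(0)=\tfrac{T}{2}\|u^d\|_{L^2(\Omega)}^2$ (the competitor $g\equiv 0$ produces the state $u\equiv 0$) yields $\int_0^T\|g^T\|_{L^2(\Omc,\mu)}^2\,dt\le T\|u^d\|^2$ and $\int_0^T\|u^T-u^d\|_{L^2(\Omega)}^2\,dt\le T\|u^d\|^2$. A forward energy estimate on the state equation (testing with $u^T$ and using $\mathcal E(u^T,u^T)\ge\|u^T\|_{L^2(\Omc,\mu)}^2$) then gives $\|u^T(T)\|_{L^2(\Omega)}^2\le\int_0^T\|g^T\|^2\,dt\le CT$, while a backward energy estimate on the adjoint equation, in which the coercivity $\mathcal E(\psi,\psi)\ge\lambda_1\|\psi\|_{L^2(\Omega)}^2$ is used to absorb $\int_0^T\|\psi^T\|_{L^2(\Omega)}^2\,dt$, gives $\|\psi^T(0)\|_{L^2(\Omega)}^2\le C\int_0^T\|u^T-u^d\|^2\,dt\le CT$. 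Hence the right-hand side of the balance is $O(\sqrt T)$, so each of the two nonnegative time integrals on the left is $O(\sqrt T)$. Finally, by Cauchy--Schwarz, $\big\|\tfrac1T\int_0^T g^T\,dt-\overline g\big\|_{L^2(\Omc,\mu)}\le\tfrac1T\int_0^T\|g^T-\overline g\|\,dt\le T^{-1/2}\big(\int_0^T\|g^T-\overline g\|^2\,dt\big)^{1/2}=O(T^{-1/4})\to0$, and the identical argument for $y$ gives the convergence of the state average, which is the claim.

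The main obstacle is the uniform control of the boundary terms $\|u^T(T)\|$ and especially $\|\psi^T(0)\|$: the adjoint estimate is only $o(T)$ because the form $\mathcal E$ is coercive on $L^2(\Omega)$ in the present Robin setting (equivalently, $-(-\Delta)_R^s$ generates an exponentially stable semigroup), which is precisely the dissipativity that drives the turnpike. The remaining delicate point is the rigorous justification of the cross-energy identity — the validity of the product rule and the admissibility of $q$ as a test function in the $y$-equation — which rests on the extra regularity $\psi^T\in L^2(0,T;D((-\Delta)_R^s))\cap H^1(0,T;L^2(\Omega))$ together with the integration-by-parts formula \eqref{Int-Part}.
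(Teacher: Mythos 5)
Your proposal is correct and follows essentially the same route as the paper: the same difference system for $(u^T-\overline u,\,\psi^T-\overline\psi)$, the same cross-duality balance (the paper's \eqref{main9}), the same $O(\sqrt T)$ energy bounds on $\|u^T(\cdot,T)\|_{L^2(\Omega)}$ and $\|\psi^T(\cdot,0)\|_{L^2(\Omega)}$, and the same Cauchy--Schwarz conclusion for the time averages. The only deviations are minor: you obtain the $O(T)$ bound \eqref{main6} by comparing with the zero control, which is exactly the shortcut the paper itself endorses in the remark following the theorem, and you make explicit the coercivity of $\mathcal E$ over $L^2(\Omega)$ that the paper leaves implicit in its ``similarly'' derivation of \eqref{obs1-2}.
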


\begin{proof}
We divide the proof in several steps.\\

{\bf Step 1}: We claim that there is a constant $C>0$ (independent of $T$) such that
\begin{align}\label{obs1}
\|u^T(\cdot,T)\|_{H_{\Omega,\beta}^s}^2\leq C \left[\int_0^T\|g^T(\cdot,t)\|_{L^2(\Omc,\mu)}^2\;dt + \int_0^T \|u^T(\cdot,t)-u^d\|_{L^2(\Omega)}^2\;dt \right].
\end{align}
Indeed, from Definition \ref{defweaksol} of weak solutions, we have that
\begin{align}\label{W1}
\langle u_t^T, v\rangle_{(H_{\Omega,\beta}^s)^{*},H_{\Omega,\beta}^s}  +\mathcal E(u^T,v)=\int_{\Omc} g^Tv d\mu, \quad \forall v\in H_{\Omega,\beta}^s.
\end{align}
Taking $v:=u^T$ as a test function in \eqref{W1}, we obtain
\begin{align}\label{W2}
\frac{1}{2}\frac{d}{dt}\|u^T(\cdot,t)\|_{H_{\Omega,\beta}^s}^2+\mathcal E(u^T,u^T)=\int_{\Omc}g^Tu^Td\mu.
\end{align}
Applying Cauchy--Schwarz's inequality, and then Young's inequality, to the right hand side in \eqref{W2}, we get that for every $\varepsilon>0$,
\begin{align}\label{m1}
\frac{1}{2}\frac{d}{dt}\|u^T(\cdot,t)\|_{H_{\Omega,\beta}^s}^2+\mathcal E(u^T,u^T)\leq \frac{1}{2\varepsilon}\int_{\Omc}|g^T|^2d\mu + \frac{\varepsilon}{2}\int_{\Omc}|u^T|^2d\mu.
\end{align}
It follows from the definition of  $\mathcal E$ that 
\begin{align*}
\|u\|_{L^2(\Omc,\mu)}^2\le \mathcal E(u,u),\;\;\forall\; u\in H_{\Omega,\beta}^s.
\end{align*}
 Thus, we get from \eqref{m1} that
\begin{align}\label{W3}
\frac{1}{2}\frac{d}{dt}\|u^T(\cdot,t)\|_{H_{\Omega,\beta}^s}^2+\mathcal E(u^T,u^T)\leq \frac{1}{2\varepsilon}\|g^T(\cdot,t)\|_{L^2(\Omc,\mu)}^2 + \frac{\varepsilon}{2}\mathcal E(u^T,u^T).
\end{align}

Integrating \eqref{W3} over $(0,T)$, we can deduce that (recall that $u^T(\cdot,0)=0$)
\begin{align}\label{obs1-1}
\|u^T(\cdot,T)\|_{H_{\Omega,\beta}^s}^2+\left(1-\frac{\varepsilon}{2}\right)\int_0^T\mathcal E(u^T,u^T)\; dt\leq \frac{1}{2\varepsilon} \int_0^T\int_{\Omc}|g^T|^2\;d\mu dt.
\end{align}
Choosing $\varepsilon$ so that $1-\frac{\varepsilon}{2}>0$, we get from  \eqref{obs1} that there is a constant $C>0$ (independent of $T$) such that
\begin{align*} 
\|u^T(\cdot,T)\|_{H_{\Omega,\beta}^s}^2\leq & C\int_0^T\int_{\Omc}|g^T|^2\;d\mu dt\\
\le &C \left[\int_0^T\|g^T(\cdot,t)\|_{L^2(\Omc,\mu)}^2\;dt + \int_0^T \|u^T(\cdot,t)-u^d\|_{L^2(\Omega)}^2\;dt \right].
\end{align*}
We have shown the claim \eqref{obs1}.

Similarly, using the fact that $g^T=-\psi^T$ in $\Sigma$, we can prove that there is a constant $C>0$ (independent of $T$) such that
\begin{align}\label{obs1-2}
\|\psi^T(\cdot,0)\|_{H_{\Omega,\beta}^s}^2+\int_0^T\mathcal E(\psi^T,\psi^T)\;dt \leq C\int_0^T \|u^T(\cdot,t)-u^d\|_{L^2(\Omega)}^2dt.
\end{align}
Hence, \eqref{obs1-2} implies that
\begin{align}\label{obs2}
\|\psi^T(\cdot,0)\|_{H_{\Omega,\beta}^s}^2\le & C\int_0^T  \|u^T(\cdot,t)-u^d\|_{L^2(\Omega)}^2dt \notag\\
\leq &C\left[\int_0^T  \|u^T(\cdot,t)-u^d\|_{L^2(\Omega)}^2dt +\int_0^T\|g^T(\cdot,t)\|_{L^2(\Omc,\mu)}^2dt \right].
\end{align}

{\bf Step 2:} Since $u^T$ and $\psi^T$ are the weak solutions of \eqref{ocpevol1} and \eqref{ocpevol2}, respectively, it follows from Definition \ref{defweaksol} that
\begin{align}\label{main3}
\langle u_t^T, w\rangle_{(H_{\Omega,\beta}^s)^{*},H_{\Omega,\beta}^s}  +\mathcal E(u^T,w)=\int_{\Omc} g^Tw d\mu, 
\end{align}
for every $ w\in H_{\Omega,\beta}^s$,
and
\begin{align}\label{main4}
-(\psi_t^T, v)_{L^2(\Omega)}  +\mathcal E(\psi^T,v)=\left(u^T-u^d, v\right)_{L^2(\Omega)}, 
\end{align}
for every $ v\in H_{\Omega,\beta}^s$. Taking $v:=u^T$ as a test function in \eqref{main4},  $w:=\psi^T$ as a test function in \eqref{main3} we get that
\begin{align}\label{WW1}
\int_{\Omc}g^T\psi^T\;d\mu=\langle u_t^T, \psi^T\rangle_{(H_{\Omega,\beta}^s)^{*},H_{\Omega,\beta}^s}+(\psi_t^T, u^T)_{L^2(\Omega)}+(u^T-u^d, u^T)_{L^2(\Omega)}.
\end{align}
Integrating \eqref{WW1} over $(0,T)$, using \eqref{main1} and noticing that
\begin{align*}
\int_0^T\Big(\langle u_t^T, \psi^T\rangle_{(H_{\Omega,\beta}^s)^{*},H_{\Omega,\beta}^s}+(\psi_t^T, u^T)_{L^2(\Omega)}\Big)\;dt=0,
\end{align*}
we obtain
\begin{align}\label{m2}
\int_0^T\int_{\Omc}g^T\psi^Td\mu dt&=\int_0^T\Big(u^T(\cdot,t)-u^d, u^T(\cdot,t)\Big)_{L^2(\Omega)} dt\notag\\
&=\int_0^T\|u^T(\cdot,t)\|_{L^2(\Omega)}^2 dt-\int_0^T\Big(u^d, u^T(\cdot,t)\Big)_{L^2(\Omega)}dt.
\end{align}
Using the fact that $g^T=-\psi^T$ in $\Sigma$ and completing the square in the right hand side of \eqref{m2}, we get 
\begin{align}\label{main5}
\int_0^T\|u^T(\cdot,t)-u^d\|_{L^2(\Omega)}^2 dt+\int_0^T\|g^T(\cdot,t)\|_{L^2(\Omc,\mu)}^2dt=-\int_0^T\Big( u^d, u^T(\cdot,t)-u^d\Big)_{L^2(\Omega)}\;dt.
\end{align}
Using the Young inequality in the right hand side of \eqref{main5}, we obtain that for every $\varepsilon>0$,
\begin{align}\label{W4}
-\int_0^T\Big( u^d, u^T(\cdot,t)-u^d\Big)_{L^2(\Omega)}\;dt\leq  \frac{1}{2\varepsilon}T\|u^d\|_{L^2(\Omega)}^2+\frac{\varepsilon}{2}\int_0^T\|u^T(\cdot,t)-u^d\|_{L^2(\Omega)}^2dt.
\end{align}
Choosing $\varepsilon$ in \eqref{W4} such that $1-\frac{\varepsilon}{2}>0$, we can deduce from  \eqref{main5} that
\begin{align}\label{main6}
\int_0^T\|g^T(\cdot,t)\|_{L^2(\Omc,\mu)}^2 dt + \int_0^T\|u^T(\cdot, t)-u^d\|_{L^2(\Omega)}^2 dt \leq CT,
\end{align}
where $C>0$ is a constant depending on $\|u^d\|_{L^2(\Omega)}$, but is independent of $T$.
Combining \eqref{obs1}, \eqref{obs2}, \eqref{main6} and using the fact that
\begin{align*}
\|u\|_{L^2(\Omega)} \le\|u\|_{H_{\Omega,\beta}^s},\;\;\forall\; u\in H_{\Omega,\beta}^s,
\end{align*}
we get the following estimate for $u^T(\cdot, T)$ and $\psi^T(\cdot,0)$:
\begin{align}\label{main7}
\|\psi^T(\cdot,0)\|_{L^2(\Omega)}+\|u^T(\cdot,T)\|_{L^2(\Omega)}\leq C\sqrt{T},
\end{align}
where the constant $C>0$ is independent of $T$.\\

{\bf Step 3:} We claim that
\begin{align*}
\frac{1}{T}\int_0^Tg^T dt \quad \mbox{ and }\quad \frac{1}{T}\int_0^T u^T dt
\end{align*}
are bounded in $L^2(\Omc,\mu)$ and $L^2(\Omega)$, respectively.  

Indeed, it follows from \eqref{main6} and Cauchy--Schwarz's inequality  that
\begin{align*}
\int_{\Omc}\left|\frac{1}{T}\int_0^Tg^T dt\right|^2\;d\mu\le &\int_{\Omc}\frac{1}{T^2}\left(\int_0^T\;dt\right)\left(\int_0^T|g^T|^2\;dt\right)\;d\mu\\
\le& \frac 1T\int_{\Omc}\int_0^T|g^T|^2\;dtd\mu=\frac 1T\int_0^T\int_{\Omc}|g^T|^2\;d\mu dt\le C.
\end{align*}
For $u^T$, we have that
\begin{align*}
\int_{\Om}\left|\frac{1}{T}\int_0^Tu^T dt\right|^2\;dx\le \frac 1T\int_0^T\int_{\Omega}|u^T|^2\;dxdt\le& \frac 1T\int_0^T\int_{\Omega}|u^T-u^d|^2\;dxdt +\frac 1T\int_0^T\int_{\Omega}|u^d|^2\;dxdt\\
\le &C+\|u^d\|_{L^2(\Omega)}^2,
\end{align*}
and we have shown the claim.\\

{\bf Step 4:} Let $w:=u^T-\overline{u}$, $\varphi:=\psi^T-\overline{\psi}$ and $h:=g^T-\overline{g}$. Using Definitions \ref{defweaksol}  we can deduce from \eqref{main1} and \eqref{main2} that
\begin{align}\label{main8}
\begin{cases}
\langle w_t, v\rangle_{(H_{\Omega,\beta}^s)^{*},H_{\Omega,\beta}^s}  +\mathcal E(w,v)=-\displaystyle\int_{\Omc} hv\;d\mu, &\quad \forall v\in H_{\Omega,\beta}^s,\vspace*{0,1cm}\\
-\langle \varphi_t, \phi\rangle_{(H_{\Omega,\beta}^s)^{*},H_{\Omega,\beta}^s}  +\mathcal E(\varphi,\phi)=\displaystyle \int_{\Om}w \phi\;dx, &\quad \forall  \phi\in H_{\Omega,\beta}^s.
\end{cases}
\end{align}
Moreover, $w(\cdot,0)=-\overline{u}$ and $\varphi(\cdot,T)=-\overline{\psi}$.

Now, taking $v:=\varphi$ as a test function in the first equation of \eqref{main8}
and $\phi:=w$ as a test function in the second equation of \eqref{main8} and using the fact that $\varphi:=\psi^T-\overline{\psi}=h$ in $\Sigma$ we get 
\begin{align}\label{M1}
\|w(\cdot,t)\|_{L^2(\Omega)}^2 +\|h(\cdot,t)\|_{L^2(\Omc,\mu)}^2=-\langle \varphi_t, w\rangle_{(H_{\Omega,\beta}^s)^{*},H_{\Omega,\beta}^s} -\langle w_t, \varphi\rangle_{(H_{\Omega,\beta}^s)^{*},H_{\Omega,\beta}^s}.
\end{align}
Integrating \eqref{M1} over $(0,T)$ we obtain
\begin{align}\label{main9}
\int_0^T\|w(\cdot,t)\|_{L^2(\Omega)}^2dt+\int_0^T\|h\|_{L^2(\Omc,\mu)}^2\;dt=-\int_{\Omega} \varphi(x,T) w(x,T)dx+\int_{\Omega} \varphi(x,0)w(x,0)dx.
\end{align}

To get an estimate for the right hand side of \eqref{main9}, we observe the following:
\begin{align*}
\int_{\Omega}|w(x,T)|^2 dx &=\int_{\Omega}|u^T(x,T)-\overline{u}(x)|^2dx\\
&\leq 2\int_{\Omega}|u^T(x,T)|^2dx +2 \int_{\Omega}|\overline{u}(x)|^2 dx
 \leq CT + 2\int_{\Omega}|\overline{u}(x)|^2 dx,
\end{align*}
where  we have used \eqref{main7} in the last estimate.

In a similar way, we have that
\begin{align*}
\int_{\Omega}|\varphi(x,0)|^2 dx \leq CT + 2\int_{\Omega}|\overline{\psi}(x)|^2 dx.
\end{align*}
Thus, the first term in the right hand side of \eqref{main9} can be estimated as follows:
\begin{align*}
-\int_{\Omega} \varphi(x,T) w(x,T)dx &\leq \left(\int_{\Omega}|\varphi(x,T)|^2dx\right)^{\frac 12} \left(\int_{\Omega}|w(x,T)|^2dx\right)^{\frac 12}\\
&\leq \left(\int_{\Omega}|\overline{\psi}(x)|^2dx\right)^{\frac 12} \left(CT +2\int_{\Omega}|\overline{u}(x)|^2\right)^{\frac 12} dx.
\end{align*}

Analogously, for the second term in the right hand side of \eqref{main9} we have that
\begin{align*}
\int_{\Omega} \varphi(x,0)w(x,0)dx \leq \left(CT  +2\int_{\Omega}|\overline{\psi}(x)|^2dx\right)^{\frac 12}\left(\int_{\Omega}|\overline{u}(x)|^2 dx\right)^{\frac 12}.
\end{align*}
We have shown that
\begin{align*}
\frac{1}{T}\int_0^T\|w(\cdot,t)\|_{L^2(\Omega)}^2dt+\frac{1}{T}\int_0^T\|h(\cdot,t)\|_{L^2(\Omc,\mu)}^2 dt \leq & \left(\int_{\Omega}|\overline{\psi}|^2\;dx\right)^{\frac 12}\left(\frac CT+\frac{2}{T^2}\int_{\Omega}|\overline{u}|^2\right)^{\frac 12}\\
&+\left(\int_{\Omega}|\overline{u}|^2\;dx\right)^{\frac 12}\left(\frac CT+\frac{2}{T^2}\int_{\Omega}|\overline{\psi}|^2\right)^{\frac 12}.
\end{align*}

This implies that
\begin{align}\label{m3}
\lim_{T\to\infty} \left(\frac{1}{T}\int_0^T\int_{\Omega}|w|^2dt +\frac{1}{T}\int_0^T\int_{\Omc}|h|^2d\mu dt\right)=0.
\end{align}

{\bf Step 5: } Finally, we show that 
\begin{align*}
\frac{1}{T}\int_0^T g^T dt \quad \mbox{ and }\quad \frac{1}{T}\int_0^T u^T dt,
\end{align*}
converge to $\overline{g}$ and $\overline{u}$ strongly in $L^2(\Omc,\mu)$ and $L^2(\Omega)$, respectively.  

Indeed, we have that
\begin{align*}
\int_{\Omc}\left|\frac{1}{T}\int_0^T g^T dt -\overline{g}\right|^2\;d\mu=&\int_{\Omc}\left|\frac{1}{T}\int_0^T \left(g^T- \overline{g}\right)\; dt\right|^2\;d\mu\\
\le &\frac 1T\int_{\Omc}\int_0^T \left|g^T- \overline{g}\right|^2\;dtd\mu\\
=&\frac{1}{T}\int_0^T\int_{\Omc}|h|^2d\mu dt,
\end{align*}
which converges to $0$ by \eqref{m3}.

Similarly, we have that
\begin{align*}
\int_{\Om}\left|\frac{1}{T}\int_0^T u^T dt -\overline{u}\right|^2\;dx\leq \frac{1}{T}\int_0^T\int_{\Om}|w|^2\;dx dt,
\end{align*}
which converges to $0$ by \eqref{m3}. The proof is finished.
\end{proof}

\begin{remark}
{\em 
We observe that the computations leading to the proof of the estimate \eqref{main6} are used in the proof of our next theorem, otherwise \eqref{main6} can be easily proved as follows. By definition of minimizer, we have that $J_T(g^T)\le J_T(0)$. Hence,
\begin{align*}
\int_0^T\|g^T(\cdot,t)\|_{L^2(\Omc,\mu)}^2 dt &+ \int_0^T\|u^T(\cdot, t)-u^d\|_{L^2(\Omega)}^2 dt\\
&=2J_T(g^T)\le 2J_T(0)= \int_0^T\|0-u^d\|_{L^2(\Omega)}^2 dt=T\|u^d\|_{L^2(\Omega)}^2,
\end{align*}
which is exactly \eqref{main6} with constant $C=1$.
}
\end{remark}

The following exponential turnpike property is our second main result concerning the Robin problem. 

\begin{theorem}\label{mainresult2}
Let $\gamma\geq 0$ be a real number. There is a constant $C=C(\gamma)>0$ (independent of $T$) such that for every $t\in[0,T]$ we have the following estimate:
\begin{align}\label{turnpike}
\|u^T(\cdot,t)-\overline{u}\|_{L^2(\Omega)}+\|\psi^T(\cdot,t)-\overline{\psi}\|_{L^2(\Omega)} \leq C\Big(e^{-\gamma t} + e^{-\gamma(T-t)}\Big) \Big(\|\overline{u}\|_{L^2(\Omega)}+\|\overline{\psi}\|_{L^2(\Omega)}\Big).
\end{align}
\end{theorem}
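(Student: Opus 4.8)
The plan is to strengthen the Cesàro convergence of Theorem \ref{mainresult1} to a pointwise--in--time estimate by exploiting the saddle--point (forward--backward) structure of the optimality system. I keep the difference variables $w:=u^T-\overline u$, $\varphi:=\psi^T-\overline\psi$, $h:=g^T-\overline g$ introduced in the proof of Theorem \ref{mainresult1}, which solve the coupled system \eqref{main8} with the endpoint data $w(\cdot,0)=-\overline u$, $\varphi(\cdot,T)=-\overline\psi$ and the coupling relation $h=\varphi|_{\Sigma}$ already established there. The goal is then to show that $w$ relaxes exponentially fast towards $0$ starting from $t=0$, while $\varphi$ relaxes exponentially fast starting from $t=T$, so that both remain exponentially small in the interior of $(0,T)$.

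First I would record three energy identities obtained by testing the two equations in \eqref{main8}. Testing the $w$--equation with $w$ produces a forward dissipation identity controlling $\tfrac12\tfrac{d}{dt}\|w\|^2+\mathcal E(w,w)$; testing the $\varphi$--equation with $\varphi$ produces a backward one for $-\tfrac12\tfrac{d}{dt}\|\varphi\|^2+\mathcal E(\varphi,\varphi)$; and testing crosswise, exactly as in the derivation of \eqref{M1} and using $h=\varphi|_{\Sigma}$ together with the symmetry of $\mathcal E$, yields the monotone cross identity
\[
\frac{d}{dt}\big(w(\cdot,t),\varphi(\cdot,t)\big)_{L^2(\Om)}=-\|w(\cdot,t)\|_{L^2(\Om)}^2-\|\varphi(\cdot,t)\|_{L^2(\Omc,\mu)}^2\le 0 .
\]
These three identities encode the hyperbolic/saddle structure of the extremal system: the pair $(w,\varphi)$ is governed by an operator possessing one stable and one unstable direction, so that the two--point data at $t=0$ and $t=T$ split into a forward--decaying and a backward--decaying contribution.

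The core step is to convert this structure into genuine exponential decay. Using the coercivity of $\mathcal E$ (dissipativity of $-(-\Delta)_R^s$) together with the feedback $h=\varphi$, I would build a Lyapunov functional — a suitable combination of $\|w\|^2$, $\|\varphi\|^2$ and the cross term $(w,\varphi)_{L^2(\Om)}$ — and show it obeys a differential inequality of the form $\tfrac{d}{dt}E(t)\le -2\gamma E(t)$ along the stable direction and its time--reversed analogue along the unstable one. Integrating forward from $w(\cdot,0)=-\overline u$ and backward from $\varphi(\cdot,T)=-\overline\psi$, and controlling the two ``wrong--end'' traces $w(\cdot,T)$ and $\varphi(\cdot,0)$ by the a priori bound \eqref{main7} (which grows only like $\sqrt T$ and is therefore annihilated by the factor $e^{-\gamma T}$), I would arrive at
\[
\|w(\cdot,t)\|_{L^2(\Om)}+\|\varphi(\cdot,t)\|_{L^2(\Om)}\le C\big(e^{-\gamma t}+e^{-\gamma(T-t)}\big)\big(\|\overline u\|_{L^2(\Om)}+\|\overline\psi\|_{L^2(\Om)}\big),
\]
which is exactly \eqref{turnpike}, with $C$ depending on $\gamma$ but not on $T$.

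The hard part will be this last step. Unlike the interior--control situation, the control here acts through the nonlocal exterior Robin datum $\beta h$ rather than through a bounded operator, so the energy identities carry the extra exterior contributions $\|\cdot\|_{L^2(\Omc,\mu)}^2$, and one must be careful to use the correct Gelfand--triple pivot for $H_{\Omega,\beta}^s$ when identifying $\langle w_t,w\rangle$ with the time derivative of a norm. More seriously, upgrading the merely averaged dissipation of Theorem \ref{mainresult1} to a pointwise exponential rate, uniformly in $T$, requires a coercivity/observability estimate for the coupled operator: the decay is produced by the feedback $h=\varphi$ and persists even when $(-\Delta)_R^s$ has no spectral gap (e.g. $\beta\equiv0$), so the admissible rate $\gamma$ is dictated by the effective gap of the saddle--point system rather than by the semigroup alone. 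Verifying that uniform coercivity, and that the cross terms generated by the exterior coupling do not spoil the sign needed in the differential inequality, is the delicate point of the argument.
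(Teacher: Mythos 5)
Your proposal is not a complete proof: the step that actually produces the theorem is missing, and it is missing in a way that your own closing paragraph correctly diagnoses. The preparatory material is fine and matches the paper's Step 1 -- the difference system \eqref{main8}, the endpoint data $w(\cdot,0)=-\overline u$, $\varphi(\cdot,T)=-\overline\psi$, and the cross identity $\frac{d}{dt}\big(w(\cdot,t),\varphi(\cdot,t)\big)_{L^2(\Om)}=-\|w(\cdot,t)\|^2_{L^2(\Om)}-\|\varphi(\cdot,t)\|^2_{L^2(\Omc,\mu)}$, whose integrated form is exactly \eqref{main11}. But the core claim -- that some quadratic Lyapunov combination of $\|w\|^2$, $\|\varphi\|^2$ and $(w,\varphi)_{L^2(\Om)}$ satisfies $\frac{d}{dt}E\le -2\gamma E$ ``along the stable direction'' -- is only announced, never derived, and there is no routine way to derive it here. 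Testing the $w$-equation with $w$ gives $\frac12\frac{d}{dt}\|w\|^2_{L^2(\Om)}+\mathcal E(w,w)=-(\varphi,w)_{L^2(\Omc,\mu)}$, whose right-hand side has no sign, and $\mathcal E$ is not coercive on $H^s_{\Omega,\beta}$ (for $\beta\equiv 0$ constants satisfy $\mathcal E(u,u)=0$; in general one only has $\mathcal E(u,u)\ge\|u\|^2_{L^2(\Omc,\mu)}$), so no spectral gap of the semigroup is available and the decay must be manufactured entirely from the forward--backward coupling. Splitting the two-point problem into a forward-decaying and a backward-decaying contribution, which your plan presupposes, is precisely a Riccati-type decoupling of the optimality system -- and the paper states explicitly (Remark \ref{rem-310}) that Riccati theory has not been developed for this fractional exterior-control setting and that their method deliberately avoids it. So your argument stalls exactly at the point where the proof has to happen; the $\sqrt T$ control of the wrong-end traces via \eqref{main7} is fine but moot without the decay inequality.

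The paper closes this gap by a different mechanism that you may want to note, since it is the missing idea. Using essentially your energy identities, it first proves that the solution operator $\Lambda^{-1}$ of the coupled two-point system, mapping the endpoint data $(w(\cdot,0),\varphi(\cdot,T))$ to $(w,\varphi)$, is bounded \emph{uniformly in $T$} (estimate \eqref{normB}). It then observes that the exponentially rescaled variables $\widetilde w:=w/(e^{-\gamma t}+e^{-\gamma(T-t)})$ and $\widetilde\varphi:=\varphi/(e^{-\gamma t}+e^{-\gamma(T-t)})$ solve the \emph{same} system perturbed by $\gamma\mathbb F$, where $\mathbb F$ is a multiplication-type operator of norm at most $1$; choosing $\gamma$ so small that $\gamma\|\Lambda^{-1}\|<1$ (condition \eqref{gamma}), the operator $I-\gamma\Lambda^{-1}\mathbb F$ is inverted by a Neumann series, giving the $T$-uniform bound \eqref{final-turnpike} on $(\widetilde w,\widetilde\varphi)$, and the $T$-independent embedding $L^2((0,T);H^s_{\Omega,\beta})\cap H^1((0,T);(H^s_{\Omega,\beta})^{*})\hookrightarrow C([0,T];L^2(\Om))$ converts this into the pointwise estimate \eqref{turnpike}. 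Note in particular that the admissible rate $\gamma$ arises from the smallness condition $\gamma\|\Lambda^{-1}\|<1$, not from any dissipation rate of $(-\Delta)_R^s$ -- which is exactly why the argument survives the absence of a spectral gap that you correctly identified as fatal to the Lyapunov route.
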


\begin{proof}
We divide the proof in several steps.\\

{\bf Step 1:} Let $w:=u^T-\overline{u}$ and $\varphi:=\psi^T-\overline{\psi}$ satisfy \eqref{main8}.  It follows from Step 4 in the proof of Theorem \ref{mainresult1} that 
\begin{align}\label{main10}
\begin{cases}
\langle w_t, v\rangle_{(H_{\Omega,\beta}^s)^{*},H_{\Omega,\beta}^s}  +\mathcal E(w,v)=- (\varphi, v)_{L^2(\Omc,\mu)} , &\quad \forall v\in H_{\Omega,\beta}^s,\vspace*{0,1cm}\\
-\langle \varphi_t, v\rangle_{(H_{\Omega,\beta}^s)^{*},H_{\Omega,\beta}^s}  +\mathcal E(\varphi,v)=\displaystyle (w, v)_{L^2(\Omega)} , &\quad \forall  v\in H_{\Omega,\beta}^s.
\end{cases}
\end{align}

Besides, the following identity holds:
\begin{align}\label{main11}
\int_0^T\|w(\cdot,t)\|_{H_{\Omega,\beta}^s}^2dxdt+\int_0^T\|\varphi(\cdot,t)\|_{L^2(\Omc,\mu)}^2 dt=\int_{\Omega} \varphi(x,T) w(x,T)dx-\int_{\Omega} \varphi(x,0)w(x,0)dx.
\end{align}

Next, we use the following notations:
\begin{align}\label{notation1}
(D_t \phi, v):= \langle \phi_t, v\rangle_{(H_{\Omega,\beta}^s)^{*},H_{\Omega,\beta}^s} +\int_{\Omega} \phi(x,0) v dx, \quad \forall v\in H_{\Omega,\beta}^s,
\end{align}
and
\begin{align}\label{notation2}
(D_t^{*} \phi, v):= -\langle \phi_t, v\rangle_{(H_{\Omega,\beta}^s)^{*},H_{\Omega,\beta}^s} +\int_{\Omega} \phi(x,T) v dx, \quad \forall v\in H_{\Omega,\beta}^s.
\end{align}

Using the notations \eqref{notation1}-\eqref{notation2}, we can rewrite the system \eqref{main10} as follows:
\begin{align}\label{main12}
\begin{cases}
(D_t w, v) +\mathcal E(w,v)= - \displaystyle\int_{\Omc} \varphi v d\mu + \int_{\Omega}w(x,0) vdx, &\quad \forall v\in H_{\Omega,\beta}^s,\vspace*{0,1cm}\\
(D_t^{*}\varphi, v) +\mathcal E(\varphi,v)=\displaystyle \int_{\Omega}wv\;dx+\int_{\Omega} \varphi(x,T)v dx,  &\quad \forall v\in H_{\Omega,\beta}^s.
\end{cases}
\end{align}

We observe that we can rewrite \eqref{main12} as follows:
\begin{align}\label{main13}
\left(\begin{array}{cc}
-( \cdot, v)_{L^2(\Omega)} & (D_t^{*}\cdot, v)+\mathcal E(\cdot,v)\\
(D_t\cdot, v)+\mathcal E(\cdot,v) & (\cdot,v)_{L^2(\Omc,\mu)}
\end{array}\right) \left(\begin{array}{c}
w\\
\varphi
\end{array}\right)= \left(\begin{array}{c}
(\varphi(\cdot,T), v)_{L^2(\Omega)}\\
(w(\cdot,0), v)_{L^2(\Omega)}.
\end{array}\right).
\end{align}

Let us denote by $\Lambda$ the matrix operator
\begin{align*}
\Lambda:=\left(\begin{array}{cc}
-(\cdot, v)_{L^2(\Omega)} & (D_t^{*}\cdot, v)+\mathcal E(\cdot,v)\\
(D_t\cdot, v)+\mathcal E(\cdot,v) & (\cdot,v)_{L^2(\Omc,\mu)}
\end{array}\right).
\end{align*}
The operator $\Lambda$ corresponds to the equations for the state and adjoint vectors. The solution operator of this system, denoted $\Lambda^{-1}$, maps initial data for the state and adjoint equations to the corresponding solutions. It follows from Theorem \ref{antil} that the operator $\Lambda^{-1}$ is well defined as a mapping from $(L^2(\Omega))^2$ into $(L^2((0,T);H_{\Omega,\beta}^s)\cap H^1((0,T);(H_{\Omega,\beta}^s)^{*}))^2$. In other words, the state and adjoint equations have unique solutions in $(L^2((0,T);H_{\Omega,\beta}^s)\cap H^1((0,T);(H_{\Omega,\beta}^s)^{*}))^2$. Thus, the operator
$$\Lambda: (L^2((0,T);H_{\Omega,\beta}^s)\cap H^1((0,T);(H_{\Omega,\beta}^s)^{*}))^2 \to (L^2(\Omega))^2$$
 is invertible.\\

{\bf Step 2:} We claim that there is a constant $C>0$ (independent of $T$) such that
\begin{align}\label{normB}
\|\Lambda^{-1}\|_{\mathcal L((L^2(\Omega))^2,(L^2((0,T);H_{\Omega,\beta}^s)\cap H^1((0,T);(H_{\Omega,\beta}^s)^{*}))^2)} \leq C.
\end{align}
Indeed, in order to show the claim we will prove that for every functions $(w,\varphi)$ solving the system \eqref{main12}, there is a constant $C>0$ (independent of $T$) such that
\begin{align}\label{main14}
\|w\|_{L^2((0,T);H_{\Omega,\beta}^s)\cap H^1((0,T);(H_{\Omega,\beta}^s)^{*})}^2+&\|\varphi\|_{L^2((0,T);H_{\Omega,\beta}^s)\cap H^1((0,T);(H_{\Omega,\beta}^s)^{*})}^2\notag\\
&\leq C(\|w(\cdot,0)\|_{L^2(\Omega)}^2+\|\varphi(\cdot,T\|_{L^2(\Omega)}^2).
\end{align}

In fact, proceeding as  the proof of \eqref{obs1}, we obtain that there is a constant $C>0$ (independent of $T$) such that
\begin{align}\label{main15}
\begin{cases}
\|w(\cdot,T)\|_{L^2(\Omega)}^2\leq C \left[\displaystyle\int_0^T\|\varphi(\cdot,t)\|_{L^2(\Omc,\mu)}^2\;dt +\|w(\cdot,0)\|_{L^2(\Omega)}^2\right],\vspace*{0.1cm}\\
\|\varphi(\cdot,0)\|_{L^2(\Omega)}^2\leq C \left[\displaystyle\int_0^T\|w(\cdot,t)\|_{L^2(\Omega)}^2 dt +\|\varphi(\cdot,T)\|_{L^2(\Omega)}^2 \right]. 
\end{cases}
\end{align}

Using the estimates \eqref{obs1-1} and \eqref{obs1-2}, we get that
\begin{align}\label{main16}
\begin{cases}
\|w\|_{L^2((0,T);H_{\Omega,\beta}^s)}^2 \leq C \displaystyle\left[\int_0^T\|\varphi(\cdot,t)\|_{L^2(\Omc,\mu)}^2\;dt+ \int_0^T\|w(\cdot,t)\|_{L^2(\Omega)}^2dt +\|w(\cdot,0)\|_{L^2(\Omega)}^2\right],\vspace*{0.1cm}\\
\|\varphi\|_{L^2((0,T);H_{\Omega,\beta}^s)}^2\leq C\displaystyle\left[\int_0^T\|\varphi(\cdot,t)\|_{L^2(\Omc,\mu)}^2\;dt+ \int_0^T \|w(\cdot,t)\|_{L^2(\Omega)}^2dt+ \|\varphi(\cdot,T)\|_{L^2(\Omega)}^2\right].
\end{cases}
\end{align}

Using the identity \eqref{main11}, Young's inequality, the fact that $\|u\|_{H_{\Omega,\beta}^s}\ge\|u\|_{L^2(\Omega)}$, $u\in H_{\Omega,\beta}^s$, and  \eqref{main15} we get that for every $\varepsilon_1$, $\varepsilon_2>0$,
\begin{align}\label{jj}
&\int_0^T\|w(\cdot,t)\|_{H_{\Omega,\beta}^s}^2dt+\int_0^T\|\varphi(\cdot,t)\|_{L^2(\Omc,\mu)}^2\;dt\notag\\
\leq &C\left[ \|\varphi(\cdot,T)\|_{L^2(\Omega)}^2\left(\frac{1}{2\varepsilon_1}+\frac{1}{2\varepsilon_2}\right) +\|w(\cdot,0)\|_{L^2(\Omega)}^2\frac{(\varepsilon_1+\varepsilon_2)}{2} \right.\notag\\ 
&+ \left.  \frac{\varepsilon_1}{2}\int_0^T\|\varphi(\cdot,t)\|_{L^2(\Omc,\mu)}^2\;dt+\frac{1}{2\varepsilon_2}\int_0^T\|w(\cdot,t)\|_{L^2(\Omega)}^2dt \right].
\end{align}
Noticing that $\|w(\cdot,t)\|_{L^2(\Omega)}\le\|w(\cdot,t)\|_{H_{\Omega,\beta}^s}$ and choosing $\varepsilon_1,\varepsilon_2>0$ such that the last two terms in the right hand side of \eqref{jj} can be absorbed by the left hand side, we obtain that there is a constant $C>0$ (independent of $T$) such that
\begin{align}\label{main17}
\int_0^T\|w(\cdot,t)\|_{H_{\Omega,\beta}^s}^2dt+\int_0^T\|\varphi(\cdot,t)\|_{L^2(\Omc,\mu)}^2\;dt\leq C \Big(\|w(\cdot,0)\|_{L^2(\Omega)}^2+ \|\varphi(\cdot,T)\|_{L^2(\Omega)}^2\Big).
\end{align}
Combining \eqref{main16}-\eqref{main17}, we can deduce that there is a constant $C>0$ (independent of $T$) such that
\begin{align}\label{main18}
\|w\|_{L^2((0,T);H_{\Omega,\beta}^s)}^2 +\|\varphi\|_{L^2((0,T);H_{\Omega,\beta}^s)}^2\leq C \Big(\|w(\cdot,0)\|_{L^2(\Omega)}^2+ \|\varphi(\cdot,T)\|_{L^2(\Omega)}^2\Big).
\end{align}

Since $w$ satisfies the first equation in \eqref{main10}, using the continuity of $\mathcal E$, we have that
\begin{align*}
\|w_t\|_{L^2((0,T);(H_{\Omega,\beta}^s)^{*})}\leq C(\|w\|_{L^2((0,T);H_{\Omega,\beta}^s)} + \|\varphi\|_{L^2((0,T);L^2(\Omc,\mu))}).
\end{align*}
In a similar way, we can obtain an estimate for $\varphi_t$ and  the claim follows.\\

{\bf Step 3:} Define 
\begin{align*}
\begin{cases}
\widetilde{w}&:=\displaystyle\frac{1}{e^{-\gamma t}+ e^{-\gamma(T-t)}}w,\vspace{0.1cm}\\
\widetilde{\varphi}&:=\displaystyle\frac{1}{e^{-\gamma t}+ e^{-\gamma(T-t)}}\varphi,
\end{cases}
\end{align*}
where we recall that $w:=u^T-\overline{u}$ and $\varphi:=\psi^T-\overline{\psi}$.  Calculating we get that $\widetilde{w}$ and $\widetilde{\varphi}$ solve the following problems:
\begin{align*}
\begin{cases}
\widetilde{w}_t+(-\Delta)^s\widetilde{w} =\gamma f(t)\widetilde{w} & \text{ in }Q,\\
\mathcal{N}_s \widetilde{w}+  \beta\widetilde{w} = -\beta\widetilde{\varphi} & \text{ in }\Sigma,\\
\widetilde{w}(\cdot, 0) =\displaystyle\frac{-1}{1+e^{-\gamma T}}\overline{u} & \text{ in }\Omega,
\end{cases}
\end{align*}
and
\begin{align*}
\begin{cases}
-\widetilde{\varphi}_t+(-\Delta)^s \widetilde{\varphi}=\widetilde{w} -\gamma f(t)\widetilde{\varphi} & \text{ in }Q,\\
\mathcal{N}_s \widetilde{\varphi} + \beta\widetilde{\varphi} = 0 & \text{ in }\Sigma,\\
\widetilde{\varphi}(\cdot,T) =\displaystyle\frac{-1}{1+e^{-\gamma T}}\overline{\psi} & \text{ in }\Omega.
\end{cases}
\end{align*}
Here, $f$ denotes the time-dependent function given by
\begin{align}\label{ec3}
f(t):=\displaystyle\frac{e^{-\gamma(T-t)}-e^{-\gamma t}}{e^{-\gamma t} + e^{-\gamma (T-t)}}.
\end{align}

Using Definition \ref{defweaksol}, we obtain
\begin{align}\label{ec1}
\langle \widetilde{w}_t, v\rangle_{(H_{\Omega,\beta}^s)^{*},H_{\Omega,\beta}^s}  +\mathcal E(\widetilde{w},v)=-\displaystyle\int_{\Omc} \widetilde{\varphi}v\; d\mu +\gamma f(t)\int_{\Omega}\widetilde{w}v\;dx,
\end{align}
and
\begin{align}\label{ec2}
-\langle \widetilde{\varphi}_t, v\rangle_{(H_{\Omega,\beta}^s)^{*},H_{\Omega,\beta}^s}  +\mathcal E(\widetilde{\varphi},v)=\displaystyle \int_{\Om} \widetilde{w}v\;dx -\gamma f(t)\int_{\Omega}\widetilde{\varphi} v\;dx,
\end{align}
for every $v\in H_{\Omega,\beta}^s$. 

Using \eqref{notation1} and \eqref{notation2}, we can rewrite  \eqref{ec1} and \eqref{ec2} as follows:
\begin{align}\label{ec4}
(D_t \widetilde{w}, v) +\mathcal E(\widetilde{w},v)=-\displaystyle\int_{\Omc} \widetilde{\varphi}v d\mu +\gamma f(t)\int_{\Omega}\widetilde{w}vdx + \int_{\Omega}\widetilde{w}(\cdot,0) vdx ,
\end{align}
and
\begin{align}\label{ec5}
(D_t^{*}\widetilde{\varphi}, v) +\mathcal E(\widetilde{\varphi},v)=\displaystyle \int_{\Omega} \widetilde{w}v\;dx-\gamma f(t)\int_{\Omega}\widetilde{\varphi} v\;dx +\int_{\Omega} \widetilde{\varphi}(\cdot,T) v\; dx.
\end{align}

Following Step 1,  we can rewrite \eqref{ec4} and \eqref{ec5} as follows:
\begin{multline}\label{ec6}
\left[\left(\begin{array}{cc}
-(\cdot, v)_{L^2(\Omega)} & (D_t^{*}\cdot, v)+\mathcal E(\cdot,v)\\
(D_t\cdot, v)+\mathcal E(\cdot,v) & (\cdot,v)_{L^2(\Omc,\mu)}
\end{array}\right) -\gamma\left(\begin{array}{cc}
0 & -f(t)(\cdot,v)_{L^2(\Omega)}\\
f(t)(\cdot,v)_{L^2(\Omega)} & 0
\end{array}\right)\right] \left(\begin{array}{c}
\widetilde{w}\\
\widetilde{\varphi}
\end{array}\right)\\= \left(\begin{array}{c}
(\widetilde{\varphi}(\cdot,T), v)_{L^2(\Omega)}\\
(\widetilde{w}(\cdot,0), v)_{L^2(\Omega)}.
\end{array}\right).
\end{multline}

Let us denote by $\mathbb{F}$ the operator matrix
\begin{align*}
\mathbb{F}:=\left(\begin{array}{cc}
0 & -f(t)(\cdot,v)_{L^2(\Omega)}\\
f(t)(\cdot,v)_{L^2(\Omega)} & 0
\end{array}\right).
\end{align*}

We claim that $\|\mathbb{F}\|_{\mathcal L((L^2(\Om\times(0,T))^2)}\leq 1$. Indeed,  for every $\xi\in L^2(\Om\times (0,T))$ and $v\in L^2(\Om)$ we have that
\begin{align*}
\left|\int_0^T f(t)(\xi(\cdot,t),v)_{L^2(\Omega)}dt\right|&\leq \int_0^T\displaystyle\frac{e^{-\gamma(T-t)}-e^{-\gamma t}}{e^{-\gamma t} + e^{-\gamma (T-t)}}\Big|(\xi(\cdot,t),v)_{L^2(\Omega)}\Big|dt\\
&\leq \int_0^T\Big|(\xi(\cdot,t),v)_{L^2(\Omega)}\Big|dt\leq \|\xi\|_{L^2(\Om\times(0,T))}\|v\|_{L^2(\Omega)},
\end{align*}
and the claim is proved.

Let us choose $\gamma>0$ such that
\begin{align}\label{gamma}
\alpha:=\gamma\|\Lambda^{-1}\|_{\mathcal L((L^2(\Omega))^2, (L^2((0,T);H_{\Omega,\beta}^s)\cap H^1((0,T);(H_{\Omega,\beta}^s)^{*}))^2)}<1.
\end{align}
Since the norm of $\Lambda^{-1}$ is independent of $T$ (see \eqref{normB}), it follows that $\gamma$ is also independent of $T$.\\

{\bf Step 4:}
Let $\widetilde{\Phi}:=(\widetilde{w},\widetilde{\varphi})^T$.  Then, the system \eqref{ec6} is equivalent to
\begin{align}\label{ec7}
(\Lambda-\gamma \mathbb{F})\widetilde{\Phi}=\mathcal{I}, 
\end{align}
where we have set
\begin{align*}
\mathcal{I}:=\left(\begin{array}{c}
(\widetilde{\varphi}(\cdot,T), v)_{L^2(\Omega)}\\
(\widetilde{w}(\cdot,0), v)_{L^2(\Omega)}
\end{array}\right).
\end{align*}
Since $\Lambda$ is invertible,  it follows that \eqref{ec7} is equivalent to
\begin{align}\label{ec8}
(I-\gamma \Lambda^{-1}\mathbb{F})\widetilde{\Phi}=\Lambda^{-1}\mathcal{I}.
\end{align}
Using  \cite[Theorem 2.14]{kress1989linear}, the existence and uniqueness of solutions for operator equations as \eqref{ec7} can be established in terms of Neumann series, provided that $\gamma \Lambda^{-1}\mathbb{F}$ is a contraction. Since we have chosen $\gamma$ such that 
\begin{align*}
\gamma\|\Lambda^{-1}\|_{\mathcal L((L^2(\Omega))^2,(L^2((0,T);H_{\Omega,\beta}^s)\cap H^1((0,T);(H_{\Omega,\beta}^s)^{*}))^2)}<1\;\mbox{ and }\; \|\mathbb{F}\|_{L^2((0,T);L^2(\Omega))}\leq 1,
\end{align*} 
it follows that $I-\gamma \Lambda^{-1}\mathbb{F}$ has a bounded inverse which is given by the following Neumann series:
\begin{align*}
(I-\gamma \Lambda^{-1}\mathbb{F})^{-1}=\displaystyle\sum_{k=0}^{\infty}\Big(\gamma \Lambda^{-1}\mathbb{F}\Big)^k.
\end{align*}
In addition, we have that
\begin{align}
\|I-\gamma \Lambda^{-1}\mathbb{F}\|_{\mathcal L((L^2((0,T);H_{\Omega,\beta})\cap H^1((0,T);(H_{\Omega,\beta}^s)^{*}))^2)}\leq\displaystyle\frac{1}{1-\alpha},
\end{align}
where we recall that $0<\alpha<1$ is given in \eqref{gamma}.

Therefore, we can deduce that
\begin{align}\label{final-turnpike}
&\left\|\frac{1}{e^{-\gamma t}+ e^{-\gamma(T-t)}} w\right\|_{L^2((0,T);H_{\Omega,\beta}^s)\cap H^1(0,T;(H_{\Omega,\beta}^s)^{*})}+ \left\|\frac{1}{e^{-\gamma t}+ e^{-\gamma(T-t)}}\varphi\right\|_{L^2((0,T);H_{\Omega,\beta}^s)\cap H^1((0,T);(H_{\Omega,\beta}^s)^{*})}\notag \\ 
\leq &\frac{\|\Lambda^{-1}\|_{(L^2(\Omega))^2}}{1-\alpha }\frac{1}{1+e^{-\gamma T}}\Big(\|w(\cdot,0)\|_{L^2(\Omega)}+ \|\varphi(\cdot,T)\|_{L^2(\Omega)}\Big).
\end{align}

Since $H_{\Omega,\beta}^s$ is a Hilbert space, using 
\cite[Proposition 23.23]{zeidler1990nonlinear}, we have that the continuous embedding
\begin{align*}
L^2((0,T);H_{\Omega,\beta}^s)\cap H^1((0,T);(H_{\Omega,\beta}^s)^{*})\hookrightarrow C([0,T];L^2(\Omega)),
\end{align*}
 holds. In addition, the embedding constant is independent of $T$. 

Thus, we can deduce from \eqref{final-turnpike} that for every $t\in [0,T]$, the following estimate holds:
\begin{align*}
\|u^T(\cdot,t)-\overline{u}\|_{L^2(\Omega)}+\|\psi^T(\cdot,t)-\overline{\psi}\|_{L^2(\Omega)}\leq C\Big(e^{-\gamma t}+e^{-\gamma(T-t)}\Big)\Big(\|\overline{u}\|_{L^2(\Omega)}+\|\overline{\psi}\|_{L^2(\Omega)}\Big),
\end{align*}
with a constant $C>0$ which is independent of $T$.
We have shown \eqref{turnpike} and the proof is finished.
\end{proof}

We conclude this section with the following observation.

\begin{remark}\label{rem-310}
{\em We observe the following facts.
\begin{enumerate}
\item From our previous results, we can also obtain an estimate for the control. Indeed, let 
\begin{align*}
\widetilde{h}:=\frac{1}{e^{-\gamma t}+e^{-\gamma(T-t)}}(g^T-\overline{g}).
\end{align*}
Since $\|u\|_{L^2(\Omc,\mu)}\leq \|u\|_{H_{\Omega,\beta}^s}$ for $u\in H_{\Omega,\beta}^s$,  we have that (notice that $\varphi:=\psi^T-\overline{\psi}=g^T-\overline{g}$ in $\Sigma$)
\begin{align*}
\|\widetilde{h}\|_{L^2((0,T);L^2(\Omc,\mu))}^2=\int_0^T \left(\frac{1}{e^{-\gamma t}+e^{-\gamma(T-t)}}\right)^2\|\varphi(\cdot,t)\|_{L^2(\Omc,\mu)}^2 dt
\leq \|\widetilde{\varphi}\|_{L^2((0,T);H_{\Omega,\beta}^s)}^2,
\end{align*}
where 
\begin{align*}
\widetilde{\varphi}:=\frac{1}{e^{-\gamma t}+e^{-\gamma(T-t)}}\varphi.
\end{align*}
Thus, from \eqref{final-turnpike}, we can deduce that there is a constant $C>0$ (independent on $T$)  such that
\begin{align*}
\left\|\frac{1}{e^{-\gamma t}+e^{-\gamma(T-t)}}(g^T-\overline{g})\right\|_{L^2((0,T);L^2(\Omc,\mu))}\leq C\Big(\|\overline{u}\|_{L^2(\Omega)}+ \|\overline{\psi}\|_{L^2(\Omega)}\Big).
\end{align*}
We have shown the exponential turnpike property (state, control and adjoint vectors) for the Robin control problems.
\item We do not know if the estimate \eqref{turnpike} can be improved as follows:
\begin{align}\label{T-P}
\|u^T(\cdot,t)-\overline{u}\|_{L^2(\Omega)}+\|\psi^T(\cdot,t)-\overline{\psi}\|_{L^2(\Omega)} \leq C \Big(e^{-\gamma t} \|\overline{u}\|_{L^2(\Omega)} +e^{-\gamma(T-t)}\|\overline{\psi}\|_{L^2(\Omega)} \Big).
\end{align}
Such an improved estimate has been obtained in \cite{porretta2016remarks} for the local case $s=1$, with the control function localized in $\Omega$ and zero boundary conditions, by using Riccati's theory for infinite dimensional systems. It seems that our method cannot be used to obtain \eqref{T-P}. To obtain such an estimate, most likely, one has to generalize the Riccati theory to the fractional setting and also exploit some abstract results contained in \cite[Chapter III]{lions1971optimal} about decoupling the optimality systems.
\end{enumerate}
}
\end{remark}

\section{Dirichlet exterior control problems: The turnpike property}\label{sec-6}

In this section we prove the exponential turnpike property for the Dirichlet exterior control problem. In order to do this, we need some preparations. Recall that $(-\Delta)_D^s$ denotes the operator defined in \eqref{DLO}.

\subsection{The Dirichlet exterior control problem}\label{sec-5}

In this section we give some known results needed to formulate our problem. These results will also be used in the proofs of the turnpike property. 

Let us consider first the optimal control for the stationary problem. That is, 
\begin{align}\label{FunctionalStationary}
\min_{g\in L^2(\Omc)} J(g):=\frac{1}{2}\|u-u^d\|_{L^2(\Omega)}^2 + \frac{1}{2}\|g\|_{L^2(\Omc)}^2,
\end{align}
subject to $u$ solves the following elliptic problem:
\begin{align}\label{Dir-Eq}
\begin{cases}
(-\Delta)^s u =0 & \text{ in }\Omega,\\
u= g & \text{ in }\Omc,
\end{cases}
\end{align}
where $u^d\in L^2(\Omega)$ is a fixed target.

Our notion of solution to \eqref{Dir-Eq} is the following.

\begin{definition}\label{def-weak-Dir}
Let $g\in L^2(\Omc)$. We say that $u\in L^2(\Omega)$ is a solution by transposition (or very weak solution) of \eqref{Dir-Eq} if the identity
\begin{align}\label{transposition}
\int_{\Omega}u(-\Delta)v\;dx= -\int_{\Omc}g\mathcal{N}_s v\; dx,
\end{align}
holds for every $v\in D((-\Delta)_D^s)=\Big\{v\in {H}_0^s(\Omega): \ (-\Delta)^s v\in L^2(\Omega)\Big\}$.
\end{definition}

The following existence and uniqueness result of solutions by transposition has been recently obtained in \cite[Theorem 3.5]{antil2019external}. 

\begin{theorem}\label{exis-uni-trans}
Let $g\in L^2(\Omc)$. There exists a unique solution by transposition $u$ to \eqref{Dir-Eq} in the sense of Definition \ref{def-weak-Dir}. In addition, there is a constant $C>0$ such that
\begin{align}
\|u\|_{L^2(\Omega)} \leq C \|g\|_{L^2(\Omc)}.
\end{align}
\end{theorem}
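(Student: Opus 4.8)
The plan is to use the classical transposition (duality) method: solve, for each $f\in L^2(\Omega)$, the homogeneous Dirichlet dual problem $(-\Delta)^s v=f$ in $\Omega$, $v=0$ in $\Omc$, and then define $u$ by representing the right-hand side of \eqref{transposition} against these test functions via the Riesz representation theorem. First I would establish that $(-\Delta)_D^s$ is boundedly invertible from $L^2(\Omega)$ onto $D((-\Delta)_D^s)$ endowed with its graph norm. By the fractional Poincar\'e inequality on the bounded set $\Omega$, the bilinear form $\mathcal E_0(v,w):=\frac{C_{N,s}}{2}\int\int_{\R^{2N}\setminus(\R^N\setminus\Omega)^2}\frac{(v(x)-v(y))(w(x)-w(y))}{|x-y|^{N+2s}}\,dxdy$ is continuous and coercive on $H_0^s(\Omega)$, so the Lax--Milgram lemma yields, for every $f\in L^2(\Omega)$, a unique $v\in H_0^s(\Omega)$ with $\mathcal E_0(v,w)=(f,w)_{L^2(\Omega)}$ for all $w\in H_0^s(\Omega)$. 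Such a $v$ lies in $D((-\Delta)_D^s)$ with $(-\Delta)^sv=f$ in $\Omega$, and the solution map $f\mapsto v=((-\Delta)_D^s)^{-1}f$ is bounded into the graph norm.

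Next I would record that for such $v$ the nonlocal normal derivative $\mathcal N_sv$ belongs to $L^2(\Omc)$ --- this is exactly the regularity cited from \cite[Lemma 3.2]{GSU} --- and, crucially, that the map $v\mapsto\mathcal N_sv$ is \emph{continuous} from $D((-\Delta)_D^s)$ (graph norm) into $L^2(\Omc)$. Combined with the previous step, this gives the estimate $\|\mathcal N_sv\|_{L^2(\Omc)}\le C\|f\|_{L^2(\Omega)}$. With this in hand, the functional $\Phi(f):=-\int_{\Omc}g\,\mathcal N_sv\,dx$ is linear in $f$ and, by the Cauchy--Schwarz inequality together with the trace estimate, bounded on $L^2(\Omega)$ with $\|\Phi\|\le C\|g\|_{L^2(\Omc)}$. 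By the Riesz representation theorem there is a unique $u\in L^2(\Omega)$ with $(u,f)_{L^2(\Omega)}=\Phi(f)$ and $\|u\|_{L^2(\Omega)}=\|\Phi\|\le C\|g\|_{L^2(\Omc)}$.

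It then remains to identify this $u$ with the sought solution. Since $f=(-\Delta)^sv$ ranges over all of $L^2(\Omega)$ as $v$ ranges over $D((-\Delta)_D^s)$ (by the bijectivity established in the first step), the identity $(u,f)_{L^2(\Omega)}=-\int_{\Omc}g\,\mathcal N_sv\,dx$ is precisely the transposition identity \eqref{transposition} of Definition \ref{def-weak-Dir}, so $u$ is a solution by transposition and the norm bound is exactly the asserted estimate $\|u\|_{L^2(\Omega)}\le C\|g\|_{L^2(\Omc)}$. Uniqueness is immediate: if $u$ is the difference of two solutions, then $(u,f)_{L^2(\Omega)}=0$ for every $f\in L^2(\Omega)$ by surjectivity of $(-\Delta)_D^s$, whence $u=0$.

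The main obstacle is the quantitative trace estimate $\|\mathcal N_sv\|_{L^2(\Omc)}\le C\|(-\Delta)^sv\|_{L^2(\Omega)}$, that is, the continuity of the nonlocal normal derivative as an operator from the graph-norm domain of $(-\Delta)_D^s$ into $L^2(\Omc)$; the qualitative membership $\mathcal N_sv\in L^2(\Omc)$ is cited, but it is the boundedness of this trace map --- obtained either via a closed-graph argument or directly from the integration-by-parts formula \eqref{Int-Part} --- that makes $\Phi$ bounded and thereby drives the entire transposition scheme. Everything else (Lax--Milgram for the dual problem, Riesz representation, and the uniqueness argument) is routine once this estimate is secured.
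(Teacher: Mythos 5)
Your proposal is correct, but note that the paper itself contains no proof of this statement: Theorem \ref{exis-uni-trans} is quoted verbatim from \cite[Theorem 3.5]{antil2019external}, so there is no internal argument to compare against. Your transposition scheme is the natural (and, in substance, the expected) way to prove it: Lax--Milgram for the dual problem $(-\Delta)^s v=f$ in $\Omega$, $v=0$ in $\Omc$, gives the bijectivity of $(-\Delta)_D^s$ with bounded inverse; the functional $\Phi(f)=-\int_{\Omc}g\,\mathcal N_s v\,dx$ is then bounded and Riesz representation produces $u$; surjectivity of $(-\Delta)_D^s$ yields both the identification with Definition \ref{def-weak-Dir} and uniqueness. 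You also correctly isolated the only non-routine ingredient, namely the quantitative trace bound $\|\mathcal N_s v\|_{L^2(\Omc)}\le C\|(-\Delta)^s v\|_{L^2(\Omega)}$ on $D((-\Delta)_D^s)$. Your closed-graph route to it is sound: the domain with the graph norm is complete (the operator is selfadjoint, hence closed), the map $v\mapsto\mathcal N_s v$ is everywhere defined on it by the qualitative membership the paper records from \cite[Lemma 3.2]{GSU}, and it is closed because, for $x$ at positive distance from $\overline\Omega$, $\mathcal N_s v(x)=-C_{N,s}\int_\Omega v(y)|x-y|^{-N-2s}\,dy$ depends continuously on $v\in L^2(\Omega)$, so graph-norm convergence plus $L^2(\Omc)$-convergence of the traces forces the limits to agree a.e. The one caveat is that this argument leans on the cited qualitative result; a fully self-contained proof would require establishing $\mathcal N_s v\in L^2(\Omc)$ directly, which is where the genuine analytic work lies, exactly as in the reference the paper defers to.
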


With respect to the Dirichlet optimal control problem, we also have the following result taken from \cite[Theorems 4.1 and 4.3]{antil2019external}.

\begin{theorem}\label{minimumstationary}
There exists a solution $(\overline{g},\overline{u})\in L^2(\Omc)\times L^2(\Omega)$ to the minimization problem \eqref{FunctionalStationary}-\eqref{Dir-Eq}. In addition, the first order necessary optimality conditions are given by
\begin{align}
\overline{g}=\mathcal{N}_s \overline{\lambda},
\end{align}
where $\overline{\lambda}\in D((-\Delta)_D^s) \hookrightarrow{H}_0^s(\Omega)$ solves the following adjoint equation:
\begin{align}\label{stationary-Dir}
\begin{cases}
(-\Delta)^s \overline{\lambda} =\overline{u}-u^d & \text{ in }\Omega,\\
\overline{\lambda}= 0 & \text{ in }\Omc.
\end{cases}
\end{align}
\end{theorem}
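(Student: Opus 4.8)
The plan is to treat this as a standard linear--quadratic optimal control problem whose control-to-state map is linear and bounded, and then to extract the optimality condition by identifying the adjoint operator through the transposition formulation of Definition \ref{def-weak-Dir}. First I would introduce the solution operator $S:L^2(\Omc)\to L^2(\Omega)$, $g\mapsto u$, which assigns to each exterior datum $g$ the unique solution by transposition of \eqref{Dir-Eq}; Theorem \ref{exis-uni-trans} guarantees that $S$ is well defined, linear, and bounded, with $\|Sg\|_{L^2(\Omega)}\le C\|g\|_{L^2(\Omc)}$. Writing the reduced cost as $\widehat J(g):=\frac12\|Sg-u^d\|_{L^2(\Omega)}^2+\frac12\|g\|_{L^2(\Omc)}^2$, I would run the direct method of the calculus of variations: the quadratic control term gives coercivity, $\widehat J(g)\ge \frac12\|g\|_{L^2(\Omc)}^2$, so any minimizing sequence $(g_n)$ is bounded in $L^2(\Omc)$ and has a subsequence $g_n\rightharpoonup\overline g$. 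Since $S$ is linear and bounded it is weak-to-weak continuous, whence $Sg_n\rightharpoonup S\overline g=:\overline u$, and weak lower semicontinuity of the two squared norms yields $\widehat J(\overline g)\le\liminf_n\widehat J(g_n)$, so $(\overline g,\overline u)$ is optimal. Strict convexity of $\widehat J$ (the term $\frac12\|g\|_{L^2(\Omc)}^2$ is strictly convex and $g\mapsto\frac12\|Sg-u^d\|_{L^2(\Omega)}^2$ is convex) delivers uniqueness.

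Next, since $\widehat J$ is convex and Fr\'echet differentiable, optimality is equivalent to the Euler--Lagrange equation $\widehat J'(\overline g)=0$, i.e.
\[
(\overline u-u^d,\,Sh)_{L^2(\Omega)}+(\overline g,\,h)_{L^2(\Omc)}=0\qquad\text{for every }h\in L^2(\Omc).
\]
The whole problem now reduces to rewriting the first pairing as an $L^2(\Omc)$-inner product against $h$, that is, to computing the adjoint $S^\star(\overline u-u^d)$ explicitly.

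This adjoint identification is the main obstacle, precisely because the state $\overline u$ lives only in $L^2(\Omega)$ and not in $H_0^s(\Omega)$, so the usual variational adjoint computation via $\mathcal E(\cdot,\cdot)$ is unavailable. My plan is to route everything through the transposition identity. I would set $\overline\lambda:=((-\Delta)_D^s)^{-1}(\overline u-u^d)$, which by \eqref{DLO} is exactly the solution of the adjoint equation \eqref{stationary-Dir} and belongs to $D((-\Delta)_D^s)\hookrightarrow H_0^s(\Omega)$; moreover, by the regularity recalled after \eqref{NLND}, from $(-\Delta)^s\overline\lambda\in L^2(\Omega)$ it follows that $\mathcal N_s\overline\lambda\in L^2(\Omc)$, so the right-hand side of the claimed condition is a legitimate $L^2(\Omc)$ function. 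Taking $v=\overline\lambda$ as the admissible test function in Definition \ref{def-weak-Dir} for the state $w:=Sh$ then gives
\[
(\overline u-u^d,\,Sh)_{L^2(\Omega)}=\int_{\Omega}(Sh)\,(-\Delta)^s\overline\lambda\,dx=-\int_{\Omc}h\,\mathcal N_s\overline\lambda\,dx=-(\mathcal N_s\overline\lambda,\,h)_{L^2(\Omc)}.
\]

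Substituting this into the Euler--Lagrange equation yields $(\overline g-\mathcal N_s\overline\lambda,\,h)_{L^2(\Omc)}=0$ for all $h\in L^2(\Omc)$, and since $h$ is arbitrary we conclude $\overline g=\mathcal N_s\overline\lambda$, which is the asserted first order optimality condition. The only points requiring care beyond bookkeeping are the well-posedness and boundedness of $S$ (already supplied by Theorem \ref{exis-uni-trans}) and the regularity $\mathcal N_s\overline\lambda\in L^2(\Omc)$ that makes the transposition pairing meaningful; both are available from results stated earlier in the paper, so the remaining arguments are routine.
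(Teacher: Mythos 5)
Your proposal is correct. Note that the paper itself does not prove this statement: it imports it verbatim from \cite[Theorems 4.1 and 4.3]{antil2019external}, so there is no in-paper argument to compare against; your proof is the standard one for this class of problems and is, in substance, the route taken in that reference (reduced functional $\widehat J=J\circ S$, direct method with weak lower semicontinuity, Euler--Lagrange equation, and identification of the adjoint through the transposition identity). The key step --- testing Definition \ref{def-weak-Dir} for the state $Sh$ against $v=\overline\lambda:=((-\Delta)_D^s)^{-1}(\overline u-u^d)$ to get $(\overline u-u^d,Sh)_{L^2(\Omega)}=-(\mathcal N_s\overline\lambda,h)_{L^2(\Omc)}$ --- is exactly right, the sign bookkeeping does yield $\overline g=\mathcal N_s\overline\lambda$, and the regularity $\mathcal N_s\overline\lambda\in L^2(\Omc)$ is justified by the remark following \eqref{NLND} since $\overline\lambda\in D((-\Delta)_D^s)$. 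The only implicit ingredient you should flag is the invertibility of $(-\Delta)_D^s$ (equivalently, positivity of its first eigenvalue, i.e.\ the fractional Poincar\'e inequality on the bounded set $\Omega$), which the paper itself also uses tacitly when writing $\mathbb A^{-1}$ in Section \ref{sec42}; your uniqueness claim via strict convexity is fine but goes beyond what the theorem asserts.
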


Now, we consider the evolutionary optimal control problem. That is,
\begin{align}\label{evol-func}
\min_{g\in L^2((0,T)\times ((\Omc))} J^T(g):=\frac{1}{2}\int_0^T\|u(\cdot,t)-u^d\|_{L^2(\Om)}^2\;dt + \frac{1}{2}\int_0^T\|g(\cdot,t)\|_{L^2(\Omc)}^2 \;dt,
\end{align}
subject to $u$ solves the following fractional heat equation with a Dirichlet exterior datum:
\begin{align}\label{evol-1}
\begin{cases}
u_t+(-\Delta)^s u=0 & \text{ in } Q\\
u =g & \text{ in }\Sigma\\
u(\cdot, 0) =0 & \text{ in }\Omega.
\end{cases}
\end{align}
Once again, $u^d\in L^2(\Omega)$ is a fixed target.

We introduce our notion of solutions.

\begin{definition}
Let $g\in L^2((0,T);L^2(\Omc))$. A function $u\in L^2(\Omega\times (0,T))$ is said to be a solution by transposition (or very weak solutions) of \eqref{evol-1}, if the identity
\begin{align}
\int_0^T\int_{\Omega}u\Big(-v_t+(-\Delta)^s v\Big) dxdt= -\int_0^T\int_{\Omc}g\mathcal{N}_s v dxdt,
\end{align}
holds, for every $v\in L^2((0,T); D((-\Delta)_D^s))\cap H^1((0,T); L^2(\Omega))$ with $v(\cdot,T)=0$.
\end{definition}

For the optimal control problem \eqref{evol-func}-\eqref{evol-1}, the following existence result has been obtained in \cite[Theorems 4.1 and 4.3]{antil2020external}.

\begin{theorem}
There exists an optimal pair $(g^T,u^T)$ to the minimization problem \eqref{evol-func}-\eqref{evol-1}. Moreover, the first order optimality conditions are given by
\begin{align}
g^T=\mathcal{N}_s \lambda^T,
\end{align}
where $\lambda^T\in L^2((0,T);D((-\Delta)_D^s)\cap H^1((0,T); L^2(\Omega))$ solves the following adjoint problem:
\begin{align}\label{evol-2}
\begin{cases}
-\lambda_t^T+(-\Delta)^s \lambda^T=u^T-u^d & \text{ in } Q\\
\lambda^T =0 & \text{ in } \Sigma,\\
\lambda^T(\cdot, T) =0 & \text{ in }\Omega.
\end{cases}
\end{align}
\end{theorem}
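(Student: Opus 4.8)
The plan is to split the statement into its two assertions — existence (and uniqueness) of the optimal pair, and the characterization of the minimizer through the adjoint state — and to treat them by the classical direct method followed by an adjoint-state computation, the only genuine care being required by the low-regularity (transposition) framework in which the states live.

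First I would establish existence by the direct method of the calculus of variations. Since the initial datum in \eqref{evol-1} vanishes and the equation is linear, the (evolutionary) well-posedness of the transposition problem --- the time-dependent analogue of Theorem \ref{exis-uni-trans} --- guarantees that the control-to-state map $g\mapsto u$ is linear and continuous from $L^2((0,T);L^2(\Omc))$ into $L^2(\Omega\times(0,T))$, with a bound independent of the particular control. Consequently the reduced functional $g\mapsto J^T(g)$ in \eqref{evol-func} is continuous, strictly convex and coercive, the term $\tfrac12\int_0^T\|g(\cdot,t)\|_{L^2(\Omc)}^2\,dt$ forcing the coercivity. Taking a minimizing sequence $(g_n)$, coercivity yields an $L^2$-bound, so up to a subsequence $g_n\rightharpoonup g^T$ weakly in $L^2((0,T);L^2(\Omc))$; by linearity and continuity of the state map the associated states converge weakly in $L^2(\Omega\times(0,T))$, and weak lower semicontinuity of the $L^2$-norms gives $J^T(g^T)\le\liminf_n J^T(g_n)$, so $g^T$ is a minimizer. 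Strict convexity then yields uniqueness of the optimal pair $(g^T,u^T)$.

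Next I would derive the first-order optimality condition. As the problem is unconstrained over the Hilbert space $L^2((0,T);L^2(\Omc))$ and $J^T$ is Gâteaux differentiable, the optimum is characterized by $\langle (J^T)'(g^T),h\rangle=0$ for all $h$. Writing $\delta u$ for the state sensitivity, i.e. the transposition solution of \eqref{evol-1} with exterior datum $h$ and zero initial condition, differentiation gives
\[
\langle (J^T)'(g^T),h\rangle=\int_0^T\big(u^T-u^d,\delta u\big)_{L^2(\Omega)}\,dt+\int_0^T\big(g^T,h\big)_{L^2(\Omc)}\,dt.
\]
To eliminate the implicit dependence on $\delta u$, I would introduce the adjoint state $\lambda^T$ solving the backward problem \eqref{evol-2}, whose source is exactly $u^T-u^d\in L^2(\Omega\times(0,T))$, with homogeneous Dirichlet exterior data and terminal condition $\lambda^T(\cdot,T)=0$. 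Using $\lambda^T$ as a test function in the (evolutionary) transposition identity \eqref{transposition} applied to $\delta u$, and substituting $-\lambda_t^T+(-\Delta)^s\lambda^T=u^T-u^d$, I obtain
\[
\int_0^T\big(u^T-u^d,\delta u\big)_{L^2(\Omega)}\,dt=-\int_0^T\int_{\Omc}h\,\mathcal N_s\lambda^T\,dx\,dt.
\]
Substituting back gives $\langle (J^T)'(g^T),h\rangle=\int_0^T\int_{\Omc}\big(g^T-\mathcal N_s\lambda^T\big)h\,dx\,dt$, and since $h$ is arbitrary the Euler--Lagrange identity forces $g^T=\mathcal N_s\lambda^T$.

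The main obstacle is entirely at the level of regularity, since the states exist only in the transposition sense and the trace term $\mathcal N_s\lambda^T$ must be made meaningful. The adjoint equation \eqref{evol-2}, however, is a well-posed backward Cauchy problem for $-(-\Delta)_D^s$, which generates a strongly continuous (analytic, submarkovian) semigroup on $L^2(\Omega)$; with source $u^T-u^d\in L^2(\Omega\times(0,T))$ and homogeneous Dirichlet exterior condition, maximal $L^2$-regularity yields $\lambda^T\in L^2((0,T);D((-\Delta)_D^s))\cap H^1((0,T);L^2(\Omega))$. This is precisely the regularity that legitimizes the use of $\lambda^T$ as an admissible test function in \eqref{transposition}, and, since $(-\Delta)^s\lambda^T(\cdot,t)\in L^2(\Omega)$ for a.e. $t$, it guarantees via the regularity property of the nonlocal normal derivative recalled in Section \ref{sec-2} that $\mathcal N_s\lambda^T(\cdot,t)\in L^2(\Omc)$, so that the identity $g^T=\mathcal N_s\lambda^T$ indeed defines an element of $L^2((0,T);L^2(\Omc))$. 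Verifying these regularity statements and the admissibility of the pairing in the transposition identity is the technical heart of the argument; the remainder is the standard convex-duality computation.
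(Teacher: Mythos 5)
Your proposal is correct, and it follows essentially the route of the paper's source: this theorem is not proved in the paper itself but quoted from \cite[Theorems 4.1 and 4.3]{antil2020external}, where existence is likewise obtained by the direct method, using linearity and continuity of the control-to-state (transposition) map together with strict convexity and coercivity of $J^T$, and the optimality condition $g^T=\mathcal N_s\lambda^T$ is derived by testing the transposition identity with the adjoint state, exactly as you do (with the sign coming out consistently with the stationary case in Theorem \ref{minimumstationary}). Your treatment of the two regularity points---maximal $L^2$-regularity giving $\lambda^T\in L^2((0,T);D((-\Delta)_D^s))\cap H^1((0,T);L^2(\Omega))$, which simultaneously makes $\lambda^T$ an admissible test function (it vanishes at $t=T$) and, via the property recalled in Section \ref{sec-2} that $(-\Delta)^s u\in L^2(\Omega)$ implies $\mathcal N_s u\in L^2(\Omc)$, guarantees $g^T=\mathcal N_s\lambda^T\in L^2((0,T);L^2(\Omc))$---addresses precisely the technical heart of the cited argument, so no gaps remain.
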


Let $(e^{-t(-\Delta)_D^s})_{t\geq 0}$ be the submarkovian  semigroup on $L^2(\Omega)$ generated by the operator $-(-\Delta)^s_D$. Then, the solution $\lambda^T$ of \eqref{evol-2} is given by
\begin{align*}
\lambda^T(\cdot,t)=\int_t^Te^{-(\tau-t)(-\Delta)_D^s}\Big(u^T(\cdot,\tau)-u^d\Big)\;d\tau.
\end{align*}

\subsection{Admissible control and observation operators}\label{sec42}
To prove the turnpike property in the case of the Dirichlet exterior control, we will use the approach of \emph{admissible control and observation operators} that we explain next. As we have mentioned in the introduction, these concepts shall allow us to use semigroups theory to prove existence of solutions to \eqref{evol-1} and to obtain some regularity results. More precisely, we will obtain a continuous in time optimal state, which is crucial to get the exponential turnpike property.

Let us denote by  $\mathbb{A}:=(-\Delta)_D^s$, with $D(\mathbb{A}):=D((-\Delta)_D^s)$.  That is, $\mathbb A$ is the realization in $L^2(\Omega)$ of the fractional Laplace operator with zero Dirichlet exterior condition defined in \eqref{DLO}. The operator $\mathbb A$ can be extended to a bounded operator from $H_0^s(\Omega)$ into $H^{-s}(\Omega)$. If there is no confusion we use the same notation $\mathbb A$. Then, the operator
$-\mathbb{A}: D(\mathbb{A})\subset {H}_0^{s}(\Omega)\to {H}^{-s}(\Omega)$ generates a strongly continuous submarkovian semigroup $(\mathbb{T}(t))_{t\geq 0}$ on ${H}^{-s}(\Omega)$ which coincides with the semigroup  $(e^{-t(-\Delta)_D^s})_{t\geq 0}$ on $L^2(\Omega)$.

Let $(D(\mathbb A))^\star$ denote the dual space of $D(\mathbb A)$ so that we have the following continuous and dense embeddings:
\begin{align*}
D(\mathbb A)\hookrightarrow {H}^{-s}(\Omega) \hookrightarrow (D(\mathbb A))^\star.
\end{align*}
The semigroup $\mathbb{T}$ can be also extended to $(D(\mathbb A))^\star$ and its generator is an extension of $\mathbb A$. We notice that the semigroup $\mathbb{T}$ is exponentially stable.
We refer to the book of Tucsnak and Weiss \cite[Chapter 2]{tucsnak2009observation} for an abstract version and further properties.

 Throughout the following, if there is no confusion, we shall only denote by  $\mathbb T$ any of the above mentioned three semigroups.

The following definitions are inspired from \cite[Section 4.2 and 4.3]{tucsnak2009observation}. 

\begin{definition}\label{def-admissible}
\begin{enumerate}
\item An operator $\mathbb{B}\in \mathcal{L}(L^2(\Omc);(D(\mathbb A))^\star)$ is called an admissible control operator for the semigroup $(\mathbb{T}(t))_{t\geq 0}$, if for some $\tau>0$, $\operatorname{Rang}(\Phi_{\tau})\subset {H}^{-s}(\Omega)$, where for $g\in L^2((0,T); L^2(\Omc))$ we have set
\begin{align*}
\Phi_{\tau}g(t):=\int_0^{t}\mathbb{T}(t-\tau)\mathbb{B}g(\tau)\;d\tau.
\end{align*}
\item An operator $\mathbb{E}\in \mathcal{L}(D(\mathbb A),L^2(\Omega))$ is called an admissible observation operator for the semigroup $(\mathbb{T}(t))_{t\geq 0}$, if for some $\tau>0$, $\Psi_{\tau}$ has a continuous extension to ${H}^{-s}(\Omega)$, where for $u_0\in D(\mathbb A)$,
\begin{align*}
(\Psi_{\tau}u_0)(t):=
\begin{cases}
\mathbb{E}\mathbb{T}(t)u_0 & \text{ if } t\in [0,\tau],\\
0 & \text{ if } t>\tau.
\end{cases}
\end{align*}
\end{enumerate}
\end{definition}


\begin{remark}
{\em We observe the following.
\begin{enumerate}
\item  An admissible control operator $\mathbb{B}$ is called bounded if $\mathbb{B}\in \mathcal{L}(L^2(\Omc); {H}^{-s}(\Omega))$, and unbounded otherwise. Obviously, every bounded operator $\mathbb{B}\in \mathcal{L}(L^2(\Omc); {H}^{-s}(\Omega))$ is an admissible control operator  for $\mathbb{T}$. 

\item An admissible observation operator $\mathbb{E}$ is called bounded if it can be extended such that $\mathbb{E}\in \mathcal{L}({H}^{-s}(\Omega); L^2(\Omega))$, and unbounded otherwise. Once again, every bounded linear operator $\mathbb{E}\in \mathcal{L}({H}^{-s}(\Omega); L^2(\Omega))$ is an admissible observation operator for the semigroup $\mathbb{T}$.
\end{enumerate}
}
\end{remark}

With the previous notations, we consider the control operator $\mathbb{B}\in \mathcal{L}(L^2(\Omc); (D(\mathbb A))^\star)$ defined by $\mathbb{B}:=\mathbb{A}\mathbb{D}$, where $\mathbb{D}:L^2(\Omc)\to L^2(\Omega)$ is the nonlocal Dirichlet map given by 
\begin{align}\label{Dir-map}
\mathbb{D}g=u \Longleftrightarrow (-\Delta)^s u=0 \ \text{in }\Omega \ \text{and }u=g \ \text{in }\Omc.
\end{align}
It follows from Theorem \ref{exis-uni-trans} that, for every $g\in L^2(\Omc)$, there exists a unique function $u\in L^2(\Omega)$ satisfying \eqref{Dir-map}.  Therefore, the nonlocal Dirichlet exterior control problem \eqref{evol-1} can be rewritten as follows:
\begin{align}\label{evolution-new}
\begin{cases}
u_t+\mathbb{A}u=\mathbb{B}g, & \ t>0,\\
u(0)=0, &
\end{cases}
\end{align}
and $\mathbb{B}$ is an admissible control operator for the semigroup $\mathbb{T}$ generates by $-\mathbb{A}$, in the sense of Definition \ref{def-admissible}. Notice that \eqref{evolution-new} has a unique solution $u\in L^2(\Omega\times (0,T))\cap C([0,T]; H^{-s}(\Omega))$ given by 
\begin{align}
u(\cdot,t)=\int_0^t \mathbb{T}(t-\tau)\mathbb{B}g(\cdot, \tau)\;d\tau.
\end{align}

Moreover, the operator $\mathbb{B}^{*}:\mathcal{L}(D(\mathbb A); L^2(\Omc))$ is given by
\begin{align}
\mathbb{B}^{*}\varphi =-\mathcal{N}_s(\mathbb A^{-1}\varphi), \quad \forall \varphi\in L^2(\Omega).
\end{align}
Finally, it follows from \cite[Theorem 4.4.3]{tucsnak2009observation}  that $\mathbb{B}^{*}$ is an admissible observation operator for the semigroup $(\mathbb{T}(t))_{t\geq0}$ and, since $(\mathbb{T}(t))_{t\geq0}$ is exponentially stable, we have that
\begin{align}\label{admissible-obs}
\int_0^t\|\mathbb{B}^{*}\mathbb{T}(t-\tau)u(\tau)\|_{L^2(\Omc)}^2 d\tau \leq K\|u(\cdot,t)\|_{H^{-s}(\Omega)}^2,
\end{align}
where the  constant $K>0$ can be chosen independent of $t$ (see e.g. \cite[Proposition 4.4.3 and Remark 4.3.5]{tucsnak2009observation}).

To conclude this section,  we observe that, since the operator $-\mathbb{A}$ generates a strongly continuous semigroup $(\mathbb{T}(t))_{t\geq 0}$ on ${H}^{-s}(\Omega)$ (or on $(D(\mathbb A))^\star$), and $\mathbb{B}\in \mathcal{L}(L^2(\Omc); (D(\mathbb A))^\star)$ is an admissible control operator, considering the observation operator $\mathbb{E}:=I$, it follows from \cite[Proposition 4.9]{tucsnak2014well} that the triple $(\mathbb{A},\mathbb{B},\mathbb{E})$ forms a  well--posed system in the sense of \cite[Definition 3.1]{tucsnak2014well}. Therefore, the extremal system associated to the optimal control problem \eqref{evol-func}, which can be rewritten as follows:
\begin{align}\label{Dir-extremal-evol}
\left(\begin{array}{cc}
-I & -\frac{d}{dt}+\mathbb A\vspace*{0.1cm}\\
0 & E_T\vspace*{0.1cm}\\
\frac{d}{dt}+\mathbb A & \mathbb{B}\mathbb{B}^{*}\vspace*{0.1cm}\\
E_0 & 0
\end{array}\right)\left(\begin{array}{c}
u^{T}\\
\lambda^T
\end{array}\right)=\left(\begin{array}{c}
-u^d\vspace*{0.1cm}\\
0\vspace*{0.1cm}\\
0\vspace*{0.1cm}\\
0
\end{array}\right),
\end{align}
where $E_0u^T:=u(\cdot,0)$ and $E_T\lambda^T:=\lambda^T(\cdot,T)$, 
admits  a unique solution $(u^T,\lambda^T)\in C([0,T]; {H}^{-s}(\Omega))\times C([0,T];D((-\Delta)_D^s)$.

Finally, we observe that we can rewrite the extremal equation for the stationary optimal control problem \eqref{stationary-Dir} as follows:
\begin{align}\label{Dir-extremal-sta}
\left(\begin{array}{cc}
-I & \mathbb A\vspace*{0.1cm}\vspace*{0.1cm}\\
\mathbb A & \mathbb{B}\mathbb{B}^{*}
\end{array}\right)\left(\begin{array}{c}
\overline{u}\vspace*{0.1cm}\\
\overline{\lambda}
\end{array}\right)=\left(\begin{array}{c}
-u^d\vspace*{0.1cm}\\
0
\end{array}\right),
\end{align}
where $\mathbb B^\star\overline{\lambda}=-\overline{g}$ in $\Omc$.

\subsection{The turnpike property}\label{subsec-6}

Before we state and prove our last main result, we need the following technical results.

The first one is an integration by parts formula for mild solutions of abstract Cauchy problems.

\begin{proposition}{\cite[Chapter 2, Proposition 5.7]{li1995optimal}}
Let $\mathcal H$ be a Hilbert space and $A:D(A)\to \mathcal H$ the generator of a strongly continuous semigroup $(\mathcal{T}(t))_{t\geq 0}$ on $\mathcal H$. Let $X_0, Y_0\in \mathcal H$, $F,G\in L^2((0,T);\mathcal H)$ and consider the following nonhomogeneous linear Cauchy problems:
\begin{align*}
\frac{dX}{dt}-AX=F, \quad X(0)=X_0,\\
-\frac{dY}{dt}-A^{*}Y=G, \quad Y(0)=Y_0.
\end{align*}
Then, for every $0\leq \tau\leq t$, the following integration by parts formula holds: 
\begin{align}\label{integration-by-parts}
( X(t),Y(t))_{\mathcal{H}} - ( X(\tau),Y(\tau))_{\mathcal{H}} = \int_{\tau}^t \Big(( Y(\sigma),F(\sigma))_{\mathcal{H}} - ( X(\sigma),G(\sigma))_{\mathcal{H}} \Big) d\sigma.
\end{align}
\end{proposition}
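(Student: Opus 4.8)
The plan is to prove the identity first for \emph{strong} solutions, where every manipulation is legitimate, and then to recover the general mild–solution case by density. Assume momentarily that $X_0\in D(A)$, that the data for the adjoint problem lie in $D(A^*)$, and that $F,G\in W^{1,1}((0,T);\mathcal H)$, so that the forward problem for $X$ and the adjoint problem for $Y$ admit strong solutions $X\in C^1([0,T];\mathcal H)\cap C([0,T];D(A))$ and $Y\in C^1([0,T];\mathcal H)\cap C([0,T];D(A^*))$ satisfying their equations pointwise (this is the standard regularity theory for inhomogeneous abstract Cauchy problems). For such solutions the scalar function $\sigma\mapsto (X(\sigma),Y(\sigma))_{\mathcal H}$ is $C^1$ on $[\tau,t]$, and the product rule together with the two equations gives
\begin{align*}
\frac{d}{d\sigma}(X(\sigma),Y(\sigma))_{\mathcal H}
&=(X'(\sigma),Y(\sigma))_{\mathcal H}+(X(\sigma),Y'(\sigma))_{\mathcal H}\\
&=(AX(\sigma)+F(\sigma),Y(\sigma))_{\mathcal H}+(X(\sigma),-A^{*}Y(\sigma)-G(\sigma))_{\mathcal H}\\
&=\Big[(AX(\sigma),Y(\sigma))_{\mathcal H}-(X(\sigma),A^{*}Y(\sigma))_{\mathcal H}\Big]+(Y(\sigma),F(\sigma))_{\mathcal H}-(X(\sigma),G(\sigma))_{\mathcal H}.
\end{align*}
Since $X(\sigma)\in D(A)$ and $Y(\sigma)\in D(A^{*})$, the definition of the adjoint forces the bracketed term to vanish, and integrating what remains over $[\tau,t]$ by the fundamental theorem of calculus yields exactly \eqref{integration-by-parts}.

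To treat arbitrary data $X_0,Y_0\in\mathcal H$ and $F,G\in L^2((0,T);\mathcal H)$, I would approximate: pick $X_0^n\to X_0$, $Y_0^n\to Y_0$ in $\mathcal H$ with $X_0^n\in D(A)$, $Y_0^n\in D(A^{*})$, and $F^n\to F$, $G^n\to G$ in $L^2((0,T);\mathcal H)$ with $F^n,G^n$ regular; this is possible because the domains are dense in $\mathcal H$ and smooth functions are dense in $L^2((0,T);\mathcal H)$. The corresponding strong solutions $X^n,Y^n$ satisfy \eqref{integration-by-parts}. By the continuous dependence of the (mild) solutions on their data — itself a direct consequence of the variation–of–constants formula and the boundedness of $(\mathcal T(t))_{t\ge 0}$ on $[0,T]$ — one has $X^n\to X$ and $Y^n\to Y$ in $C([0,T];\mathcal H)$ while $F^n\to F$, $G^n\to G$ in $L^2((0,T);\mathcal H)$. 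Hence the left–hand side of \eqref{integration-by-parts} passes to the limit by uniform convergence of the inner products, and the right–hand side by the Cauchy–Schwarz inequality in $L^2((0,T);\mathcal H)$, which establishes the formula in full generality.

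The main obstacle is exactly the bracketed term $(AX,Y)_{\mathcal H}-(X,A^{*}Y)_{\mathcal H}$: for a genuine mild solution $X$ need not be valued in $D(A)$, nor $Y$ in $D(A^{*})$, so this cancellation is \emph{a priori} meaningless and the naive differentiation of $(X(\sigma),Y(\sigma))_{\mathcal H}$ is unjustified. The regularization step above is what makes the argument rigorous. An equivalent route, avoiding the regularity theory for the inhomogeneous problems, is to replace $A$ by its Yosida approximations $A_n:=nA(nI-A)^{-1}\in\mathcal L(\mathcal H)$: the approximate solutions are automatically strong (the generators being bounded), the identity holds for them verbatim, and one lets $n\to\infty$ using $A_nx\to Ax$ on $D(A)$ together with the strong convergence of the associated semigroups.
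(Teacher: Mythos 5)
First, a remark on the comparison itself: the paper does not prove this proposition at all --- it is quoted from \cite[Chapter 2, Proposition 5.7]{li1995optimal} and used as a black box --- so your argument can only be measured against the standard proof of such duality identities. Your overall strategy is exactly that standard route: differentiate $\sigma\mapsto (X(\sigma),Y(\sigma))_{\mathcal H}$ for strong solutions, cancel $(AX,Y)_{\mathcal H}-(X,A^{*}Y)_{\mathcal H}$ by the definition of the adjoint, and recover the mild-solution case by approximation (or by Yosida regularization). The strong-solution computation is correct, and you correctly rely on the fact that on a Hilbert space the adjoint semigroup is strongly continuous with generator $A^{*}$.

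There is, however, one concrete flaw in your density step. The equation for $Y$ is a \emph{backward} equation, $Y'=-A^{*}Y-G$, and solving it forward from a prescribed value at $\sigma=0$ would require $-A^{*}$ to generate a strongly continuous semigroup; this fails precisely in the situations where the proposition is applied (in the paper, $A=-(-\Delta)_D^s$, $A^{*}=A$, so the $Y$-equation is a backward fractional heat equation, ill-posed forward in time). Consequently the instruction ``pick $Y_0^n\to Y_0$ with $Y_0^n\in D(A^{*})$ \ldots\ the corresponding strong solutions $Y^n$'' is vacuous in general: no solution need emanate forward from $Y_0^n$, and there is no continuous-dependence estimate in terms of $Y(0)$ either. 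A mild solution $Y$ on $[0,T]$ is determined by its \emph{terminal} value via $Y(\sigma)=\mathcal T(T-\sigma)^{*}Y(T)+\int_\sigma^T\mathcal T(r-\sigma)^{*}G(r)\,dr$, so the approximation must be anchored there: choose $Y_T^n\in D(A^{*})$ with $Y_T^n\to Y(T)$ in $\mathcal H$ and regular $G^n\to G$ in $L^2((0,T);\mathcal H)$, solve backward from $Y_T^n$ to obtain strong solutions $Y^n$, and use continuous dependence on $(Y(T),G)$ to get $Y^n\to Y$ in $C([0,T];\mathcal H)$; your limit passage then works verbatim, and $Y^n(0)\to Y(0)=Y_0$, which is all the initial condition in the statement is used for. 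The same correction is needed in your Yosida variant: the approximate backward solutions must be launched from $Y(T)$, since $e^{-tA_n^{*}}$ applied to arbitrary data does not converge to anything. With this re-anchoring your proof is complete and correct.
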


\begin{proposition}{\cite[Lemma 7]{grune2020exponential}}\label{prop-grune}
For $\eta\in L^1((0,T))$, we define the function
\begin{align*}
h(t):=\int_0^t \eta(\tau)e^{-k(t-\tau)}\;d\tau,
\end{align*}
where $k>0$. Then, there is a constant $C>0$ (independent of $T$) such that
\begin{align}
\|h\|_{L^p(0,T)}\leq C\|\eta\|_{L^1(0,T)}, \quad \text{ for }1\leq p\leq \infty.
\end{align}
\end{proposition}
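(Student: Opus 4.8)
The plan is to recognize $h$ as a convolution and then invoke Young's convolution inequality, the only real care being to track that the resulting constant carries no dependence on $T$. Concretely, I would extend $\eta$ by zero outside $(0,T)$ and set $\phi(t):=e^{-kt}$ for $t\ge 0$ and $\phi(t):=0$ for $t<0$. Then for $t\in(0,T)$ one has $h(t)=(\eta*\phi)(t)$, since $\eta$ vanishes outside $(0,T)$ forces the lower limit of the convolution integral back to $0$. Because extending the integration domain only enlarges the norm, $\|h\|_{L^p(0,T)}\le\|\eta*\phi\|_{L^p(\R)}$.

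First I would apply Young's inequality in the form $\|f*g\|_{L^p(\R)}\le\|f\|_{L^1(\R)}\,\|g\|_{L^p(\R)}$, valid for every $1\le p\le\infty$, which gives $\|h\|_{L^p(0,T)}\le\|\eta\|_{L^1(0,T)}\,\|\phi\|_{L^p(0,\infty)}$. It then remains only to estimate the kernel norm. For $1\le p<\infty$ a direct computation yields $\|\phi\|_{L^p(0,\infty)}=(kp)^{-1/p}$, while $\|\phi\|_{L^\infty(0,\infty)}=1$; in either case the quantity is finite precisely because $k>0$, and crucially it is independent of $T$. Taking $C:=\sup_{1\le p\le\infty}\|\phi\|_{L^p(0,\infty)}<\infty$, which is finite since $(kp)^{-1/p}$ is a bounded function of $p$ on $[1,\infty)$ tending to $1$ as $p\to\infty$, closes the argument with a single constant valid simultaneously for all $p$.

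If one prefers to avoid quoting Young's inequality, the two endpoints can be handled by hand and the intermediate exponents by interpolation. For $p=\infty$, bounding $e^{-k(t-\tau)}\le 1$ gives $|h(t)|\le\int_0^t|\eta(\tau)|\,d\tau\le\|\eta\|_{L^1(0,T)}$. For $p=1$, Tonelli's theorem gives $\int_0^T|h(t)|\,dt\le\int_0^T|\eta(\tau)|\Big(\int_\tau^T e^{-k(t-\tau)}\,dt\Big)d\tau\le\tfrac1k\|\eta\|_{L^1(0,T)}$. The Riesz--Thorin interpolation theorem then covers $1<p<\infty$, again with a $T$-independent constant, and one recovers the same conclusion.

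The hard part, though it is a mild one, is the uniformity in $T$, and this is exactly where the hypothesis $k>0$ is indispensable: the exponential kernel is $L^p$-integrable on the whole half-line $(0,\infty)$, so replacing $(0,T)$ by $(0,\infty)$ costs nothing and the resulting kernel norm carries no dependence on the horizon $T$. Were $k=0$, the $p=1$ (and intermediate) bounds would degenerate linearly in $T$ and the stated $T$-independence would fail, which is why the exponential decay of the semigroup is the structural ingredient behind this estimate.
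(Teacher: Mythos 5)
Your proof is correct, and in fact the paper offers no proof of this proposition at all: it is quoted verbatim from \cite[Lemma 7]{grune2020exponential}, whose argument is precisely the one you give, namely viewing $h$ as the convolution of $\eta$ (extended by zero) with the kernel $e^{-kt}\chi_{[0,\infty)}$ and applying Young's inequality $\|f*g\|_{L^p}\leq\|f\|_{L^1}\|g\|_{L^p}$, with the $T$-independence coming exactly from the fact that $k>0$ makes the kernel $L^p(0,\infty)$ with norm $(kp)^{-1/p}\leq\max\{1/k,1\}$ independent of the horizon. Your endpoint-plus-interpolation variant and your closing remark on why $k=0$ would break the uniformity are both sound as well.
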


Our third main result, which is the following theorem, shows the exponential turnpike property of the Dirichlet control problems.

\begin{theorem}\label{mainresult3}
Let $\gamma \geq 0$ be a real number. Let $(u^T, g^T,\lambda^T)$ be the solution of \eqref{Dir-extremal-evol} and $(\overline{u},\overline{g},\overline{\lambda})$ the corresponding stationary solution of \eqref{Dir-extremal-sta}. Then, there is a constant $C=C(\gamma)>0$ (independent on $T$) such that for every $t\in[0,T]$ we have the following estimate:
\begin{align}\label{turnpike3}
\|u^T(\cdot,t)-\overline{u}\|_{H^{-s}(\Omega)}+ \|\lambda^T(\cdot,t)-\overline{\lambda}\|_{{H}^{-s}(\Omega)}\leq C\Big(e^{-\gamma t} + e^{-\gamma(T-t)}\Big)\Big(\|\overline{u}\|_{H^{-s}(\Omega)}+ \|\overline{\lambda}\|_{H^{-s}(\Omega)}\Big).
\end{align}
Moreover, the following estimate holds:
\begin{multline}\label{turnpike2}
\left\|\frac{1}{e^{-\gamma t}+e^{-\gamma(T-t)}}(u^T-\overline{u})\right\|_{L^2((0,T);H^{-s}(\Omega))}+\left\|\frac{1}{e^{-\gamma t}+e^{-\gamma(T-t)}}(g^T-\overline{g})\right\|_{L^2((0,T);L^2(\Omc))}\\
+ \left\|\frac{1}{e^{-\gamma t}+e^{-\gamma(T-t)}}(\lambda^T-\overline{\lambda})\right\|_{L^2((0,T);H^{-s}(\Omega))}\leq C\Big(\|\overline{u}\|_{H^{-s}(\Omega)}+\|\overline{\lambda}\|_{H^{-s}(\Omega)}\Big).
\end{multline}
\end{theorem}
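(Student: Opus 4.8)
My plan is to mimic the Robin-case strategy of Theorem~\ref{mainresult2}, but now working in the pivot space $H^{-s}(\Omega)$ rather than $L^2(\Omega)$, since the Dirichlet exterior data only live in $L^2(\Omc)$ and the states are very weak solutions. The central object is again the extremal system \eqref{Dir-extremal-evol}, viewed as an operator equation $\Lambda\Phi^T=\mathcal I$ for the pair $\Phi^T:=(u^T,\lambda^T)$, and I would establish the exponential turnpike by the \emph{rescaling-plus-Neumann-series} argument. The main structural difference from the Robin case is that here the admissibility estimate \eqref{admissible-obs} replaces the coercivity of $\mathcal E$, and the well-posedness of the triple $(\mathbb A,\mathbb B,\mathbb E)$ guarantees that $\Lambda^{-1}$ is a bounded operator into $C([0,T];H^{-s}(\Omega))^2$.

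\textbf{Key steps.}
First I would introduce the rescaled variables
$\widetilde w:=(e^{-\gamma t}+e^{-\gamma(T-t)})^{-1}(u^T-\overline u)$ and
$\widetilde\lambda:=(e^{-\gamma t}+e^{-\gamma(T-t)})^{-1}(\lambda^T-\overline\lambda)$,
and compute the system they solve: subtracting \eqref{Dir-extremal-sta} from \eqref{Dir-extremal-evol} to get the equations for $(u^T-\overline u,\lambda^T-\overline\lambda)$, and then conjugating by the scalar weight produces a perturbed extremal system of the form $(\Lambda-\gamma\mathbb F)\widetilde\Phi=\widetilde{\mathcal I}$, with $\mathbb F$ carrying the factor $f(t)$ from \eqref{ec3} exactly as in \eqref{ec6}. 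The second step is the $T$-independent a priori bound $\|\Lambda^{-1}\|\le C$; this is where the admissibility inequality \eqref{admissible-obs} and the exponential stability of $\mathbb T$ are used, through the integration-by-parts formula \eqref{integration-by-parts} to produce the analogue of \eqref{main11}, and through Proposition~\ref{prop-grune} to convert the Duhamel convolution $u(\cdot,t)=\int_0^t\mathbb T(t-\tau)\mathbb B g\,d\tau$ into an $L^2((0,T);H^{-s})$ estimate with a constant independent of $T$. Since $\|\mathbb F\|\le 1$ (the multiplier $f$ satisfies $|f(t)|\le 1$, bounded exactly as in the Robin proof), I then choose $\gamma>0$ with $\alpha:=\gamma\|\Lambda^{-1}\|<1$, invoke \cite[Theorem 2.14]{kress1989linear} to write $(I-\gamma\Lambda^{-1}\mathbb F)^{-1}=\sum_{k\ge0}(\gamma\Lambda^{-1}\mathbb F)^k$, and bound $\|\widetilde\Phi\|\le(1-\alpha)^{-1}\|\Lambda^{-1}\widetilde{\mathcal I}\|$. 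The final step extracts the pointwise estimate \eqref{turnpike3} from the $L^2(H^{-s})\cap H^1((D(\mathbb A))^\star)$ bound on $\widetilde\Phi$ via the $T$-uniform continuous embedding into $C([0,T];H^{-s}(\Omega))$, and reads off \eqref{turnpike2} directly as the $L^2$-in-time statement, with the control estimate coming from $g^T-\overline g=\mathbb B^*(\lambda^T-\overline\lambda)=-\mathcal N_s(\mathbb A^{-1}(\lambda^T-\overline\lambda))$ together with \eqref{admissible-obs}.

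\textbf{Main obstacle.}
The hard part will be the $T$-uniform bound on $\|\Lambda^{-1}\|$ in the very weak / $H^{-s}$ setting. In the Robin case the coercivity $\|u\|_{L^2(\Omc,\mu)}^2\le\mathcal E(u,u)$ gave the energy estimates \eqref{obs1-1}--\eqref{obs1-2} cheaply; here the control operator $\mathbb B=\mathbb A\mathbb D$ is genuinely unbounded into $H^{-s}(\Omega)$, so the corresponding bounds must be extracted purely from admissibility/observability. The delicate point is ensuring that the constant $K$ in \eqref{admissible-obs}, the embedding constant into $C([0,T];H^{-s})$, and the constant from Proposition~\ref{prop-grune} are all \emph{independent of the horizon} $T$; this uniformity is exactly what the exponential stability of $\mathbb T$ buys, and it is the crux that makes the resulting $\gamma$ independent of $T$ and hence yields a genuine exponential turnpike rather than a $T$-dependent decay.
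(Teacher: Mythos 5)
Your outline reproduces the paper's own strategy in all essential respects (subtract the extremal systems, prove a $T$-uniform bound on $\Lambda^{-1}$ via admissibility, exponential stability, the integration-by-parts formula \eqref{integration-by-parts} and Proposition \ref{prop-grune}, then rescale and invert $I-\gamma\Lambda^{-1}\mathbb{F}$ by a Neumann series). However, the mechanism you propose for extracting the pointwise estimate \eqref{turnpike3} fails as written. You invoke a $T$-uniform embedding of $L^2((0,T);H^{-s}(\Omega))\cap H^1((0,T);(D(\mathbb A))^\star)$ into $C([0,T];H^{-s}(\Omega))$, in analogy with the Robin case. But the Zeidler-type embedding $L^2((0,T);V)\cap H^1((0,T);V^\star)\hookrightarrow C([0,T];H)$ requires the target $H$ to be the \emph{pivot} of the Gelfand triple $V\hookrightarrow H\hookrightarrow V^\star$; in the Dirichlet setting the relevant triple is $D(\mathbb A)\hookrightarrow H^{-s}(\Omega)\hookrightarrow (D(\mathbb A))^\star$, so landing in $C([0,T];H^{-s}(\Omega))$ would require $\widetilde w,\widetilde\varphi\in L^2((0,T);D(\mathbb A))$, which is false here: the optimal state is only a very weak (mild) solution, with spatial regularity $L^2(\Omega\times(0,T))$. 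From $L^2((0,T);H^{-s})\cap H^1((0,T);(D(\mathbb A))^\star)$ one only gets continuity into the interpolation midpoint between $H^{-s}(\Omega)$ and $(D(\mathbb A))^\star$, which is strictly weaker than $H^{-s}(\Omega)$. The paper circumvents this precisely by \emph{not} using an embedding: it proves directly, via the auxiliary dual problems \eqref{Dir-ii}--\eqref{Dir-ii-2}, the admissibility estimate \eqref{admissible-obs} and exponential stability, that $\Lambda^{-1}$ is bounded into $(C([0,T];H^{-s}(\Omega)))^2$ with a constant independent of $T$ (estimate \eqref{SS-W}), alongside the $L^2$-in-time bound \eqref{SS1}, and then reads \eqref{turnpike3} off from the $C([0,T])$ operator norm after the Neumann-series step. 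You do mention this $C([0,T];H^{-s}(\Omega))^2$ boundedness in your overview, so the repair is internal to your outline: drop the interpolation argument and carry the fixed-point estimate in the $C([0,T])$ topology as well.

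A second, smaller gap concerns the control term in \eqref{turnpike2}: it does not follow ``directly'' from $g^T-\overline g=-\mathbb B^{*}(\lambda^T-\overline\lambda)$ together with \eqref{admissible-obs}, because the admissibility inequality controls $\mathbb B^{*}$ only along semigroup orbits $\mathbb T(\cdot)u$, whereas $\widetilde\varphi$ solves the perturbed adjoint equation with source $\widetilde w-\gamma F\widetilde\varphi$; an $L^2((0,T);H^{-s}(\Omega))$ bound on $\widetilde\varphi$ does not by itself bound $\|\mathbb B^{*}\widetilde\varphi\|_{L^2((0,T);L^2(\Omc))}$, since $\mathbb B^{*}$ is an unbounded (merely admissible) observation operator. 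The paper's Step 5 closes this by one more application of \eqref{integration-by-parts} to the pair $(\widetilde w,\widetilde\varphi)$, which produces the identity bounding $\int_0^T\bigl(\|\mathbb B^{*}\widetilde\varphi(\cdot,t)\|_{L^2(\Omc)}^2+\|\widetilde w(\cdot,t)\|_{H^{-s}(\Omega)}^2\bigr)dt$ by boundary terms plus $2\gamma\|\widetilde w\|_{L^2((0,T);H^{-s}(\Omega))}\|\widetilde\varphi\|_{L^2((0,T);H^{-s}(\Omega))}$, after which the already-established estimates conclude. Without such an extremal-system identity, your estimate for $g^T-\overline g$ is unjustified.
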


\begin{proof}
We use the previous notations and take advantage of the proofs given in the previous sections.
We proceed in several steps.\\

{\bf Step 1:} Let us consider the functions $w:=u^T-\overline{u}$ and $\varphi:=\lambda^T-\overline{\lambda}$. Since the time derivatives of $\overline{u}$ and $\overline{\lambda}$ are zero, we can rewrite the system \eqref{Dir-extremal-sta} as follows:
\begin{align}\label{new-system}
\left(\begin{array}{cc}
-I & -\frac{d}{dt}+ \mathbb A\vspace*{0.1cm}\vspace*{0.1cm}\\
0 & E_T\vspace*{0.1cm}\\
\frac{d}{dt}+\mathbb A & \mathbb{B}\mathbb{B}^{*}\vspace*{0.1cm}\\
E_0 & 0
\end{array}\right)\left(\begin{array}{c}
\overline{u}\vspace*{0.1cm}\\
\overline{\lambda}
\end{array}\right)=\left(\begin{array}{c}
-u^d\vspace*{0.1cm}\\
\overline{\lambda}\vspace*{0.1cm}\\
0\vspace*{0.1cm}\\
\overline{u}
\end{array}\right).
\end{align}
Then, subtracting \eqref{new-system} from \eqref{Dir-extremal-evol}, we obtain that  $w$ and $\varphi$ satisfy
\begin{align}
\begin{cases}
w_t(t)+\mathbb{A}w(t)= -\mathbb{B}\mathbb{B}^{*}\varphi(t), & t>0\vspace*{0.1cm}\\
w(0)=w_0,&\vspace*{0.1cm}\\
-\varphi_t(t)+\mathbb{A}\varphi(t)=w(t), & t>0\vspace*{0.1cm}\\
\varphi(T)=\varphi_T,&
\end{cases}
\end{align}
where $w_0:=-\overline{u}$ and $\varphi_T:=-\overline{\lambda}$.

We  consider the following auxiliary problems:
\begin{align}\label{Dir-ii}
\begin{cases}
-\psi_t+\mathbb{A}\psi= 0, & \text{ in }[0,t]\vspace*{0.1cm}\\
\psi(t)=w(t),&
\end{cases}
\end{align}
and
\begin{align}\label{Dir-ii-2}
\begin{cases}
\xi_t+\mathbb{A}\xi=0, & \text{ in }[t,T]\vspace*{0.1cm}\\
\xi(t)=\varphi(t).&
\end{cases}
\end{align}
We observe that using the exponential stability of the semigroup, we can deduce that there  exist two constants $M,k>0$ such that 
\begin{align}
\|\psi(\cdot,\tau)\|_{H^{-s}(\Omega)}\leq Me^{-k(t-\tau)}\|w(\cdot,t)\|_{H^{-s}(\Omega)}, \quad 0\leq  \tau \leq t.
\end{align} 

Using the integration by parts formula \eqref{integration-by-parts}, multiplying the state equation for $w$ by $\psi$ (solution of \eqref{Dir-ii}), integrating over $(0,t)$, and the state equation for $\varphi$ by $\xi$ (solution of \eqref{Dir-ii-2}) and integrating over $(t,T)$, we obtain the following identities:
\begin{align}\label{Dir-i}
\|w(\cdot,t)\|_{H^{-s}(\Omega)}^2 = -\int_0^t ( \mathbb{B}^{*}\varphi(\cdot,\tau),\mathbb{B}^{*}\psi(\cdot,\tau))_{L^2(\Omc)}\;d\tau + ( w_0,\psi(\cdot,0))_{H^{-s}(\Omega)},
\end{align}
and 
\begin{align}\label{MJ}
\|\varphi(\cdot,t)\|_{H^{-s}(\Omega)}^2 = \int_t^T ( \xi(\cdot,\tau),w(\cdot,\tau))_{H^{-s}(\Omega)}\;d\tau + ( \xi(\cdot,T),\varphi_T)_{H^{-s}(\Omega)},
\end{align}
respectively.\\

{\bf Step 2:} Since $\mathbb{B}^{*}$ is an admissible observation operator for the semigroup $\mathbb T$, using \eqref{admissible-obs}, we can estimate the right  hand side of \eqref{Dir-i} with a constant independent of $t$.  Indeed, 
\begin{align*}
&\int_0^t\Big| ( \mathbb{B}^{*}\varphi(\cdot,\tau), \mathbb{B}^{*}\psi(\cdot,\tau))_{L^2(\Omc)} \Big|\;d\tau \\
&\leq \int_0^t  e^{-\frac{k}{2}(t-\tau)}\|\mathbb{B}^{*}\varphi(\cdot,\tau)\|_{L^2(\Omc)} \|\mathbb{B}^{*}e^{\frac{k}{2}(t-\tau)}\psi(\cdot,\tau)\|_{L^2(\Omc)}\;d\tau\\
&\leq \left(\int_0^t e^{-k(t-\tau)}\|\mathbb{B}^{*}\varphi(\cdot,\tau)\|_{L^2(\Omc)}^2\;d\tau\right)^{\frac 12} \left(\int_0^t\|\mathbb{B}^{*}e^{\frac{k}{2}(t-\tau)}\psi(\cdot,\tau)\|_{L^2(\Omc)}^2\;d\tau\right)^{\frac 12}.
\end{align*} 

It follows from \eqref{Dir-ii} that $\psi(\tau)=\mathbb{T}(t-\tau)w(\tau)$, for every $0\leq \tau\leq t$. Notice that the constant $k>0$ can be chosen such that the semigroup $(e^{\frac{k}{2}(t-\tau)}\mathbb{T}(t))_{t\geq 0}$ is still exponential stable. Thus, from \eqref{admissible-obs} we can deduce that
\begin{align*}
\left(\int_0^t\|\mathbb{B}^{*}e^{\frac{k}{2}(t-\tau)}\psi(\cdot,\tau)\|_{L^2(\Omc)}^2\;d\tau\right)^{\frac 12}&=\left(\int_0^t\|\mathbb{B}^{*}e^{\frac{k}{2}(t-\tau)}\mathbb{T}(t-\tau) w(\cdot,\tau)\|_{L^2(\Omc)}^2\;d\tau\right)^{\frac 12}\\
&\leq K\|w(\cdot,t)\|_{H^{-s}(\Omega)},
\end{align*}
where $K>0$ is independent of $t$. 

Using that 
\begin{align*}
( w_0,\psi(\cdot,0))_{H^{-s}(\Omega)}\leq \|w_0\|_{H^{-s}(\Omega)}\|w(\cdot,t)\|_{H^{-s}(\Omega)}M\sqrt{e^{-kt}}
\end{align*}
 and Young's inequality,  we can estimate \eqref{Dir-i} as follows:
\begin{align}\label{Dir-iii}
\|w(\cdot,t)\|_{H^{-s}(\Omega)}^2\leq C\left(\int_0^t e^{-k(t-\tau)}\|\mathbb{B}^{*}\varphi(\cdot,\tau)\|_{L^2(\Omc)}^2\;d\tau+\|w_0\|_{H^{-s}(\Omega)}^2 M^2e^{-kt} \right), \quad\forall t\in[0,T],
\end{align}
with a constant $C>0$ independent of $T$.

Using the fact that all the involved exponential functions are bounded by $1$, we obtain the following estimate for $w$ with respect to the norm of $C([0,T]; H^{-s}(\Omega))$:
\begin{align}\label{Dir-iv}
\|w\|_{C([0,T]; H^{-s}(\Omega))}^2\leq C\left(\|\mathbb{B}^{*}\varphi\|_{L^2((0,T);L^2(\Omc))}^2 +\|w_0\|_{H^{-s}(\Omega)}^2\right).
\end{align}

Similarly, we have the following estimate for $\varphi$:
\begin{align}\label{Dir-v}
\|\varphi\|_{C([0,T]; H^{-s}(\Omega))}^2\leq C\left( \|w\|_{L^2((0,T);H^{-s}(\Omega))}^2 +\|\varphi_T\|_{H^{-s}(\Omega)}^2\right).
\end{align}

Now, using \eqref{integration-by-parts} again, we have that
\begin{align}\label{Dir-vi}
&\int_0^T \left( \|\mathbb{B}^{*}\varphi(\cdot,\tau)\|_{L^2(\Omc)}^2 +\| w(\cdot,\tau)\|_{H^{-s}(\Omega)}^2 \right)\;d\tau\notag\\
&=( w_0, \varphi(\cdot,0))_{H^{-s}(\Omega)}- ( w(\cdot,T),\varphi_T)_{H^{-s}(\Omega)}\notag\\
&\leq \|w_0\|_{H^{-s}(\Omega)}\|\varphi(\cdot,0)\|_{H^{-s}(\Omega)} + \|w(\cdot,T)\|_{H^{-s}(\Omega)}\|\varphi_T\|_{H^{-s}(\Omega)}.
\end{align}

Applying Young's inequality to the right hand side in \eqref{Dir-vi}, we get that for every $\varepsilon_1,\varepsilon_2>0$,
\begin{multline*}
\|w_0\|_{H^{-s}(\Omega)}\|\varphi(\cdot,0)\|_{H^{-s}(\Omega)} + \|w(\cdot,T)\|_{H^{-s}(\Omega)}\|\varphi_T\|_{H^{-s}(\Omega)}\leq \frac{1}{2\varepsilon_1}\|w_0\|_{H^{-s}(\Omega)}^2 + \frac{\varepsilon_1}{2}\|\varphi(\cdot,0)\|_{H^{-s}(\Omega)}^2   \\ + \frac{1}{2\varepsilon_2}\|\varphi_T\|_{H^{-s}(\Omega)}^2 + \frac{\varepsilon_2}{2}\|w(\cdot,T)\|_{H^{-s}(\Omega)}^2.
\end{multline*}
Using \eqref{Dir-iv} and \eqref{Dir-v}, we can estimate the previous inequality as follows: 
\begin{multline*}
\|w_0\|_{H^{-s}(\Omega)}\|\varphi(\cdot,0)\|_{H^{-s}(\Omega)} + \|w(\cdot,T)\|_{H^{-s}(\Omega)}\|\varphi_T\|_{H^{-s}(\Omega)}\leq \frac{1}{2\varepsilon_1}\|w_0\|_{H^{-s}(\Omega)}^2 + \frac{1}{2\varepsilon_2}\|\varphi_T\|_{H^{-s}(\Omega)}^2 \\ + \frac{C\varepsilon_1}{2}\left( \|w\|_{L^2((0,T);H^{-s}(\Omega))}^2 +\|\varphi_T\|_{H^{-s}(\Omega)}^2\right) + \frac{C\varepsilon_2}{2}\left(\|\mathbb{B}^{*}\varphi\|_{L^2((0,T);L^2(\Omc))}^2 +\|w_0\|_{H^{-s}(\Omega)}^2\right).
\end{multline*}
Thus, \eqref{Dir-vi} can be estimated as follows:
\begin{align}\label{SS}
&\int_0^T \left( \|\mathbb{B}^{*}\varphi(\cdot,\tau)\|_{L^2(\Omc)}^2 +\| w(\cdot,\tau)\|_{H^{-s}(\Omega)}^2 \right)\;d\tau\notag\\
\leq& \frac{1}{2\varepsilon_1}\|w_0\|_{H^{-s}(\Omega)}^2 + \frac{1}{2\varepsilon_2}\|\varphi_T\|_{H^{-s}(\Omega)}^2+ \frac{C\varepsilon_1}{2}\left( \|w\|_{L^2((0,T);H^{-s}(\Omega))}^2 +\|\varphi_T\|_{H^{-s}(\Omega)}^2\right)\notag\\
& + \frac{C\varepsilon_2}{2}\left(\|\mathbb{B}^{*}\varphi\|_{L^2((0,T);L^2(\Omc))}^2 +\|w_0\|_{H^{-s}(\Omega)}^2\right).
\end{align}
Taking $\varepsilon_1,\varepsilon_2>0$ such that $1-\frac{C\varepsilon_1}{2}>0$ and $1-\frac{C\varepsilon_2}{2}>0$, we obtain from \eqref{SS} that there is a constant $C>0$ (independent of  $T$) such that
\begin{align}\label{Dir-vii}
\int_0^T \left( \|\mathbb{B}^{*}\varphi(\cdot,\tau)\|_{L^2(\Omc)}^2 +\| w(\cdot,\tau)\|_{H^{-s}(\Omega)}^2 \right)\;d\tau\leq C\Big(\|w_0\|_{H^{-s}(\Omega)}^2 + \|\varphi_T\|_{H^{-s}(\Omega)}^2\Big).
\end{align}

Combining \eqref{Dir-iv}-\eqref{Dir-v} and \eqref{Dir-vii} we can deduce that there is a constant  $C>0$ (independent of $T$) such that
\begin{align}
\|w\|_{C([0,T];H^{-s}(\Omega))}^2 + \|\varphi\|_{C([0,T];H^{-s}(\Omega))}^2 \leq C\Big(\|w_0\|_{H^{-s}(\Omega)}^2+\|\varphi_T\|_{H^{-s}(\Omega)}^2 \Big).
\end{align}

We have shown that the norm of the solution map 
$$\Lambda^{-1}:(H^{-s}(\Omega))^2 \to (C([0,T];H^{-s}(\Omega)))^2$$
 is bounded by a constant which is independent of $T$, where we recall that 
\begin{align}\label{Dir-Lambda}
\underbrace{\left(\begin{array}{cc}
-I & -\frac{d}{dt}+\mathbb A\vspace*{0.1cm}\\
0 & E_T\vspace*{0.1cm}\\
\frac{d}{dt}+\mathbb A & \mathbb{B}\mathbb{B}^{*}\vspace*{0.1cm}\\
E_0 & 0
\end{array}\right)}_{=:\Lambda}\left(\begin{array}{c}
w\\
\varphi
\end{array}\right)=\left(\begin{array}{c}
0\vspace*{0.1cm}\\
\varphi_T\vspace*{0.1cm}\\
0\vspace*{0.1cm}\\
w_0
\end{array}\right).
\end{align}
That is, there is a constant $C>0$ (independent of $T$ such that
\begin{align}\label{SS-W}
\|\Lambda^{-1}\|_{\mathcal L(H^{-s}(\Omega))^2,(C([0,T];H^{-s}(\Omega)))^2)}\le C.
\end{align}

{\bf Step 3:} We observe that the estimate \eqref{SS-W} for the map $\Lambda^{-1}$ is for solutions in $C([0,T]; H^{-s}(\Omega))$. Using Proposition \ref{prop-grune}, we can also derive an estimate on $L^2((0,T);H^{-s}(\Omega))$. Indeed, applying Proposition \ref{prop-grune} to the first term in the right hand side of \eqref{Dir-iii} we obtain that there is a constant $C>0$ (independent of $T$) such that 
\begin{align}\label{mjw}
\int_0^T\int_0^t e^{-k(t-\tau)}\|\mathbb{B}^{*}\varphi(\cdot,\tau)\|_{L^2(\Omc)}d\tau dt=&\left\|\left(\int_0^t e^{-k(t-\tau)}\|\mathbb{B}^{*}\varphi(\cdot,\tau)\|_{L^2(\Omc)}d\tau\right)\right\|_{L^1(0,T)}\notag\\
\leq &C\|\mathbb{B}^{*}\varphi\|_{L^2((0,T);L^2(\Omc))}^2.
\end{align}
Combing \eqref{Dir-iii}-\eqref{mjw}, we can deduce that there is a constant $C>0$ (independent of $T$) such that 
\begin{align}\label{Dir-viii}
\|w\|_{L^2((0,T);H^{-s}(\Omega))}^2 \leq C \left(\|\mathbb{B}^{*}\varphi\|_{L^2((0,T);L^2(\Omc))}^2 + \|w_0\|_{H^{-s}(\Omega)}^2\right).
\end{align}
In a similar way, we have that there is a constant $C>0$ (independent of $T$) such that 
\begin{align}\label{Dir-ix}
\|\varphi\|_{L^2((0,T);H^{-s}(\Omega))}^2 \leq C \Big(\|w\|_{L^2((0,T);H^{-s}(\Omega))}^2+ \|\varphi_T\|_{H^{-s}(\Omega)}^2\Big).
\end{align}

Combining \eqref{Dir-vii}-\eqref{Dir-viii} and \eqref{Dir-ix} we get the desired $L^2$--estimate for the solution map. That is, the norm of the operator
$$\Lambda^{-1}: (H^{-s}(\Omega))^2\to (L^2(0,T;H^{-s}(\Omega)))^2.$$
is bounded with a constant which is independent of $T$.
 Namely, there is a  constant $C>0$ (independent of $T$) such that 
\begin{align}\label{SS1}
\|\Lambda^{-1}\|_{\mathcal L((H^{-s}(\Omega))^2,(L^2(0,T;H^{-s}(\Omega)))^2)}\leq C.
\end{align}

{\bf Step 4:}
Proceeding as in Steps 3 and 4 in the proof of Theorem \ref{mainresult2}, we can deduce the turnpike property. Indeed, we choose $\gamma>0$ such that 
\begin{align}\label{theta}
\theta:=\gamma\|\Lambda^{-1}\|_{\mathcal{L}((H^{-s}(\Omega))^2, (L^2(0,T;H^{-s}(\Omega)))^2)}< 1.
\end{align}
 Define the functions 
\begin{align*}
\widetilde{w}:=\displaystyle\frac{1}{e^{-\gamma t}+ e^{-\gamma(T-t)}}w\;\mbox{ and }\;\widetilde{\varphi}:=\displaystyle\frac{1}{e^{-\gamma t}+ e^{-\gamma(T-t)}}\varphi.
\end{align*}
 Then,  $\widetilde{w}$ and $\widetilde{\varphi}$ satisfy
\begin{align}\label{final}
\left[\left(\begin{array}{cc}
-I & -\frac{d}{dt}+\mathbb A\vspace*{0.1cm}\\
0 & E_T\vspace*{0.1cm}\\
\frac{d}{dt}+\mathbb A & \mathbb{B}\mathbb{B}^{*}\vspace*{0.1cm}\\
E_0 & 0
\end{array}\right)-\gamma\underbrace{\left(\begin{array}{cc}
0 & -F\\
0 & 0\\
F & 0\\
0 & 0
\end{array}\right)}_{:=P}\right]
\left(\begin{array}{c}
\widetilde{w}\\
\widetilde{\varphi}
\end{array}\right)=\frac{1}{1+e^{-\gamma T}}\left(\begin{array}{c}
0\vspace*{0.1cm}\\
\varphi_T\vspace*{0.1cm}\\
0\vspace*{0.1cm}\\
w_0
\end{array}\right),
\end{align}
where 
$$F:=\frac{e^{-\gamma t}- e^{-\gamma(T-t)}}{e^{-\gamma t}+ e^{-\gamma(T-t)}}.$$ 
We observe that $\|P\|_{\mathcal L((L^2((0,T); H^{-s}(\Omega)))^2)}\leq 1$. Once again, letting 
$$\widetilde{\Phi}:=(\widetilde{w},\widetilde{\varphi})^T\; \mbox{ and }\; \mathcal{J}:=\frac{1}{1+e^{-\gamma T}}(w_0,\varphi_T)^T,$$ 
the system \eqref{final} is equivalent to
\begin{align*}
(I-\gamma \Lambda^{-1} P)\widetilde{\Phi}=\Lambda^{-1}\mathcal{J}.
\end{align*}
Since $\gamma\Lambda^{-1}P$ is a contraction, it follows from \cite[Theorem 2.14]{kress1989linear} that $I-\gamma\Lambda^{-1}P$ has a bounded inverse given by the Neumann series. That is,
\begin{align*}
(I-\gamma\Lambda^{-1}P)^{-1}=\sum_{k=0}^{\infty}(\gamma\Lambda^{-1}P)^{k}.
\end{align*}
In addition, we have that
\begin{align*}
\|(I-\gamma\Lambda^{-1}P)^{-1}\|_{\mathcal L((L^2((0,T);H^{-s}(\Omega)))^2)}\leq \frac{1}{1-\theta},
\end{align*}
where $0<\theta<1$ has been defined in \eqref{theta}.

From the previous computations, we can deduce that
\begin{align*}
\|\widetilde{w}\|_{L^2((0,T);H^{-s}(\Omega))}+\|\widetilde{\varphi}\|_{L^2((0,T);H^{-s}(\Omega))}\leq \frac{\|\Lambda^{-1}\|}{(1-\theta)(1+e^{-\gamma T})}\Big(\|w_0\|_{H^{-s}(\Omega)}+\|\varphi_T\|_{H^{-s}(\Omega)}\Big).
\end{align*}
Since $\frac{1}{1+e^{-\gamma T}}\leq 1$, and the norm of the solution map $\Lambda^{-1}$ was obtained in $L^2(0,T)$ and in $C([0,T])$, we can deduce the following estimates:
\begin{align}\label{L^2}
&\left\|\frac{1}{e^{-\gamma t}+e^{-\gamma(T-t)}}(u^T-\overline{u})\right\|_{L^2((0,T);H^{-s}(\Omega))} +\left \|\frac{1}{e^{-\gamma t}+e^{-\gamma(T-t)}}(\lambda^T-\overline{\lambda})\right\|_{L^2((0,T);H^{-s}(\Omega))}\notag\\
\leq &C\|\Lambda^{-1}\|_{\mathcal L((H^{-s}(\Omega))^2, (L^2(0,T;H^{-s}(\Omega)))^2)}(\|\overline{u}\|_{H^{-s}(\Omega)}+\|\overline{\lambda}\|_{H^{-s}(\Omega)})\notag\\
\le &C (\|\overline{u}\|_{H^{-s}(\Omega)}+\|\overline{\lambda}\|_{H^{-s}(\Omega)}),
\end{align}
where we have used \eqref{SS1}, and 
\begin{align}\label{C}
&\left\|\frac{1}{e^{-\gamma t}+e^{-\gamma(T-t)}}(u^T-\overline{u})\right\|_{C([0,T];H^{-s}(\Omega))} + \left\|\frac{1}{e^{-\gamma t}+e^{-\gamma(T-t)}}(\lambda^T-\overline{\lambda})\right\|_{C([0,T];H^{-s}(\Omega))}\notag\\
 \leq &C \|\Lambda^{-1}\|_{\mathcal L((H^{-s}(\Omega))^2,(C([0,T];H^{-s}(\Omega)))^2)}(\|\overline{u}\|_{H^{-s}(\Omega)}+\|\overline{\lambda}\|_{H^{-s}(\Omega)})\notag\\
\leq& C (\|\overline{u}\|_{H^{-s}(\Omega)}+\|\overline{\lambda}\|_{H^{-s}(\Omega)}),
\end{align}
where we have used \eqref{SS-W}.

From \eqref{C} we obtain the desired estimate \eqref{turnpike3} for the optimal state and adjoint vectors as follows:
\begin{align*}
\|u^T(\cdot,t)-\overline{u}\|_{H^{-s}(\Omega)}+\|\lambda^T(\cdot,t)-\overline{\lambda}\|_{H^{-s}(\Omega)}\leq C(\gamma)\Big(e^{-\gamma t}+e^{-\gamma(T-t)}\Big)\Big(\|\overline{u}\|_{H^{-s}(\Omega)}+\|\overline{\lambda}\|_{H^{-s}(\Omega)}\Big).
\end{align*}

{\bf Step 5:} To obtain an estimate for the control, we observe the following. Let $h:=g^T-\overline{g}$. Then 
\begin{align*}
\widetilde{h}:=\frac{1}{e^{-\gamma t}+e^{-\gamma(T-t)}}h=-\mathbb{B}^{*}\widetilde{\varphi}.
\end{align*} 
Therefore, there is a constant $C>0$ (independent of $T$) such that 
\begin{align}
\|\widetilde{h}\|_{L^2((0,T);L^2(\Omc))}^2 =&\int_0^T\|\mathbb{B}^{*}\widetilde{\varphi}(\cdot,t)\|_{L^2(\Omc)}^2 dt\notag\\
\leq& C\int_0^T\Big(\|\mathbb{B}^{*}\widetilde{\varphi}(\cdot,t)\|_{L^2(\Omc)}^2+ \|\widetilde{w}(\cdot,t)\|_{H^{-s}(\Omega)}^2\Big)dt.
\end{align}
Next, applying the integration by parts formula \eqref{integration-by-parts} to the solutions $\widetilde{w}$ and $\widetilde{\varphi}$, we can deduce that
\begin{align*}
\int_0^T\Big(\|\mathbb{B}^{*}\widetilde{\varphi}(\cdot,t)\|_{L^2(\Omc)}^2+ \|\widetilde{w}(\cdot,t)\|_{H^{-s}(\Omega)}^2\Big)dt=&( \widetilde{w}(\cdot,0),\widetilde{\varphi}(\cdot,0))_{H^{-s}(\Omega)} - (\widetilde{w}(\cdot,T),\widetilde{\varphi}(\cdot,T))_{H^{-s}(\Omega)} \\
&+\gamma\int_0^T \Big( (\widetilde{\varphi},F\widetilde{w})_{H^{-s}(\Omega)} + ( \widetilde{w}, F\widetilde{\varphi})_{H^{-s}(\Omega)} \Big)dt.
\end{align*}
Since $\|P\|_{L^2((0,T); H^{-s}(\Omega))^2}\leq 1$, it follows that 
\begin{align}\label{Dir-xi}
\int_0^T\Big(\|\mathbb{B}^{*}\widetilde{\varphi}(\cdot,t)\|_{L^2(\Omc)}^2&+ \|\widetilde{w}(\cdot,t)\|_{H^{-s}(\Omega)}^2\Big)dt\notag \\ 
\leq& \|\widetilde{w}(\cdot,0)\|_{H^{-s}(\Omega)}\|\widetilde{\varphi}(\cdot,0)\|_{H^{-s}(\Omega)} + \|\widetilde{w}(\cdot,T)\|_{H^{-s}(\Omega)}\|\widetilde{\varphi}(T)\|_{H^{-s}(\Omega)} \notag\\ 
&+ 2\gamma \|\widetilde{w}\|_{L^2((0,T);H^{-s}(\Omega))}\|\widetilde{\varphi}\|_{L^2((0,T);H^{-s}(\Omega))}\notag\\
\leq& \Big(\|\widetilde{w}(T)\|_{H^{-s}(\Omega)}+ \|\widetilde{\varphi}(0)\|_{H^{-s}(\Omega)}\Big)\Big(\|\widetilde{w}(0)\|_{H^{-s}(\Omega)} + \|\widetilde{\varphi}(T)\|_{H^{-s}(\Omega)}\Big) \notag\\ 
& +2\gamma\Big(\|\widetilde{w}\|_{L^2((0,T);H^{-s}(\Omega))}^2 + \|\widetilde{\varphi}\|_{L^2((0,T);H^{-s}(\Omega))}^2\Big).
\end{align}
From the definition of  $\widetilde{w}$, $\widetilde{\varphi}$, and the estimate of the norm of   $\Lambda^{-1}$ given in \eqref{SS-W},  we can estimate the first term in the right hand side of \eqref{Dir-xi} as follows: 
\begin{align*}
\|\widetilde{w}(\cdot,T)\|_{H^{-s}(\Omega)}+ \|\widetilde{\varphi}(\cdot,0)\|_{H^{-s}(\Omega)}&=\frac{1}{1+e^{-\gamma T}}\Big(\|w(\cdot,T)\|_{H^{-s}(\Omega)}+ \|\varphi(\cdot,0)\|_{H^{-s}(\Omega)}\Big)\\
&\leq \|\Lambda^{-1}\|_{\mathcal L((H^{-s}(\Omega))^2,C([(0,T]; H^{-s}(\Omega)))^2)} \Big(\|w_0\|_{H^{-s}(\Omega)}+ \|\varphi_T\|_{H^{-s}(\Omega)}\Big)\\
&\le  C\Big(\|w_0\|_{H^{-s}(\Omega)}+ \|\varphi_T\|_{H^{-s}(\Omega)}\Big),
\end{align*}
where the constant $C>0$ is independent of $T$.
Using  \eqref{L^2} we obtain the desired estimate for $g^T-\overline{g}$. That is, there is a constant $C=C(\gamma)>0$ (independent of $T$) such that
\begin{align}\label{U-L^2}
\left\|\frac{1}{e^{-\gamma t}+e^{-\gamma(T-t)}}(g^T-\overline{g})\right\|_{L^2((0,T);L^2(\Omc))}^2\leq C\Big(\|w_0\|_{H^{-s}(\Omega)}^2+\|\varphi_T\|_{H^{-s}(\Omega)}^2 \Big).
\end{align}
Combining \eqref{L^2}-\eqref{U-L^2}  we get \eqref{turnpike2} and the proof is finished.
\end{proof}

We conclude the paper with the following remark.

\begin{remark}
{\em We observe the following facts.
\begin{enumerate}
 \item Even if the solutions $u^T$, $\lambda^T\in L^2(\Omega\times(0,T))$ and $\overline{u}$, $\overline{\lambda}\in L^2(\Omega)$, we do not know if the estimates \eqref{turnpike3} and \eqref{turnpike2} can be replaced with the following estimates:
\begin{align}\label{feq}
\|u^T(\cdot,t)-\overline{u}\|_{L^2(\Omega)}+ \|\lambda^T(\cdot,t)-\overline{\lambda}\|_{L^2(\Omega)}\leq C\Big(e^{-\gamma t} + e^{-\gamma(T-t)}\Big)\Big(\|\overline{u}\|_{L^2(\Omega)}+ \|\overline{\lambda}\|_{L^2(\Omega)}\Big),
\end{align}
and
\begin{multline}\label{feq1}
\left\|\frac{1}{e^{-\gamma t}+e^{-\gamma(T-t)}}(u^T-\overline{u})\right\|_{L^2(\Omega\times (0,T))}+\left\|\frac{1}{e^{-\gamma t}+e^{-\gamma(T-t)}}(g^T-\overline{g})\right\|_{L^2((\Omc)\times(0,T))}\\
+ \left\|\frac{1}{e^{-\gamma t}+e^{-\gamma(T-t)}}(\lambda^T-\overline{\lambda})\right\|_{L^2(\Omega\times (0,T))}\leq C\Big(\|\overline{u}\|_{L^2(\Omega)}+\|\overline{\lambda}\|_{L^2(\Omega)}\Big),
\end{multline} 
respectively.  Of course in \eqref{turnpike3} and \eqref{turnpike2}, the term $\Big(\|\overline{u}\|_{H^{-s}(\Omega)}+ \|\overline{\lambda}\|_{H^{-s}(\Omega)}\Big),$ can be estimated as follows: There is a constant $C>0$ (depending only on $\Omega$, $N$ and $s$) such that  
\begin{align*}
\Big(\|\overline{u}\|_{H^{-s}(\Omega)}+ \|\overline{\lambda}\|_{H^{-s}(\Omega)}\Big)\leq C\Big(\|\overline{u}\|_{L^2(\Omega)}+ \|\overline{\lambda}\|_{L^2(\Omega)}\Big).
\end{align*}
The main difficulty occurs in the terms containing $(u^T-\overline{u})$.
Let us notice that even if we do not know how to prove \eqref{feq1}, it may be true, since, as we have already observed, $u^T$ and $\lambda^T$ belong to $L^2(\Omega\times(0,T))$. However, if \eqref{feq} holds, then this would imply  that  $u^T$ and $\lambda^T$ belong to $C([0,T];L^2(\Omega))$. We know that $\lambda^T$ enjoys this regularity but we do not know if $u^T$ has this regularity. We just know that $u^T\in C([0,T];H^{-s}(\Omega))$.

\item With the same reasons as in Remark \ref{rem-310}, we do not know if the estimate \eqref{turnpike3} can be improved as follows:
\begin{align*}
\|u^T(\cdot,t)-\overline{u}\|_{H^{-s}(\Omega)}+ \|\lambda^T(\cdot,t)-\overline{\lambda}\|_{{H}^{-s}(\Omega)}\leq C\Big(e^{-\gamma t}  \|\overline{u}\|_{H^{-s}(\Omega)}+ e^{-\gamma(T-t)} \|\overline{\lambda}\|_{H^{-s}(\Omega)}\Big).
\end{align*}
\end{enumerate}
}
\end{remark}

\noindent
{\bf Acknowledgement:} We would like to thank Umberto Biccari and Dario Pighin for their useful comments that helped to improve the quality of the manuscript.

\bibliographystyle{abbrv}
\bibliography{biblio}

\begin{thebibliography}{10}

\bibitem{antil2019external}
H.~Antil, R.~Khatri, and M.~Warma.
\newblock External optimal control of nonlocal {PDE}s.
\newblock {\em Inverse Problems}, 35(8):084003, 35, 2019.

\bibitem{HAntil_RHNochetto_PVenegas_2018a}
H.~Antil, R.~Nochetto, and P.~Venegas.
\newblock Controlling the {K}elvin force: basic strategies and applications to
  magnetic drug targeting.
\newblock {\em Optim. Eng.}, 19(3):559--589, 2018.

\bibitem{HAntil_RHNochetto_PVenegas_2018b}
H.~Antil, R.~Nochetto, and P.~Venegas.
\newblock Optimizing the {K}elvin force in a moving target subdomain.
\newblock {\em Math. Models Methods Appl. Sci.}, 28(1):95--130, 2018.

\bibitem{antil2020external}
H.~Antil, D.~Verma, and M.~Warma.
\newblock External optimal control of fractional parabolic {PDE}s.
\newblock {\em ESAIM Control Optim. Calc. Var.}, 26, 2020.

\bibitem{BWZ-P}
U.~Biccari, M.~Warma, and E.~Zuazua.
\newblock Local regularity for fractional heat equations.
\newblock In {\em Recent advances in {PDE}s: analysis, numerics and control},
  volume~17 of {\em SEMA SIMAI Springer Ser.}, pages 233--249. Springer, Cham,
  2018.

\bibitem{breiten2020onthe}
T.~Breiten and L.~Pfeiffer.
\newblock On the turnpike property and the receding-horizon method for
  linear-quadratic optimal control problems.
\newblock {\em SIAM J. Control Optim.}, 58(2):1077--1102, 2020.

\bibitem{Caf3}
L.~A. Caffarelli and L.~Silvestre.
\newblock An extension problem related to the fractional {L}aplacian.
\newblock {\em Comm. Partial Differential Equations}, 32(7-9):1245--1260, 2007.

\bibitem{C-Wa}
B.~Claus and M.~Warma.
\newblock Realization of the fractional {L}aplacian with nonlocal exterior
  conditions via forms method.
\newblock {\em J. Evol. Equ.}, 2020, DOI:
  https://doi.org/10.1007/s00028-020-00567-0.

\bibitem{NPV}
E.~Di~Nezza, G.~Palatucci, and E.~Valdinoci.
\newblock Hitchhiker's guide to the fractional {S}obolev spaces.
\newblock {\em Bull. Sci. Math.}, 136(5):521--573, 2012.

\bibitem{DRV}
S.~Dipierro, X.~Ros-Oton, and E.~Valdinoci.
\newblock Nonlocal problems with {N}eumann boundary conditions.
\newblock {\em Rev. Mat. Iberoam.}, 33(2):377--416, 2017.

\bibitem{dorfman1958linear}
R.~Dorfman, P.~A. Samuelson, and R.~M. Solow.
\newblock {\em Linear programming and economic analysis}.
\newblock A Rand Corporation Research Study. McGraw-Hill Book Co., Inc., New
  York-Toronto-London, 1958.

\bibitem{QDu_MGunzburger_RBLehoucq_KZhou_2013a}
Q.~Du, M.~Gunzburger, R.~Lehoucq, and K.~Zhou.
\newblock A nonlocal vector calculus, nonlocal volume-constrained problems, and
  nonlocal balance laws.
\newblock {\em Math. Models Methods Appl. Sci.}, 23(3):493--540, 2013.

\bibitem{esteve2020turnpike}
C.~Esteve, H.~Kouhkouh, D.~Pighin, and E.~Zuazua.
\newblock The turnpike property and the long time-behavior of the
  {Hamilton-Jacobi} equation.
\newblock {\em arXiv preprint arXiv:2006.10430}, 2020.

\bibitem{faulwasser2015design}
T.~Faulwasser and D.~Bonvin.
\newblock On the design of economic {NMPC} based on approximate turnpike
  properties.
\newblock In {\em 2015 54th IEEE Conference on Decision and Control (CDC)},
  pages 4964--4970. IEEE, 2015.

\bibitem{GW-CPDE}
C.~G. Gal and M.~Warma.
\newblock Nonlocal transmission problems with fractional diffusion and boundary
  conditions on non-smooth interfaces.
\newblock {\em Comm. Partial Differential Equations}, 42(4):579--625, 2017.

\bibitem{GSU}
T.~Ghosh, M.~Salo, and G.~Uhlmann.
\newblock The {C}alder\'{o}n problem for the fractional {S}chr\"{o}dinger
  equation.
\newblock {\em Anal. PDE}, 13(2):455--475, 2020.

\bibitem{Gris}
P.~Grisvard.
\newblock {\em Elliptic problems in nonsmooth domains}, volume~69 of {\em
  Classics in Applied Mathematics}.
\newblock Society for Industrial and Applied Mathematics (SIAM), Philadelphia,
  PA, 2011.
\newblock Reprint of the 1985 original. With a foreword by Susanne C. Brenner.

\bibitem{grune2013economic}
L.~Gr\"{u}ne.
\newblock Economic receding horizon control without terminal constraints.
\newblock {\em Automatica J. IFAC}, 49(3):725--734, 2013.

\bibitem{grune2019sensitivity}
L.~Gr\"{u}ne, M.~Schaller, and A.~Schiela.
\newblock Sensitivity analysis of optimal control for a class of parabolic
  {PDE}s motivated by model predictive control.
\newblock {\em SIAM J. Control Optim.}, 57(4):2753--2774, 2019.

\bibitem{grune2020exponential}
L.~Gr\"{u}ne, M.~Schaller, and A.~Schiela.
\newblock Exponential sensitivity and turnpike analysis for linear quadratic
  optimal control of general evolution equations.
\newblock {\em J. Differential Equations}, 268(12):7311--7341, 2020.

\bibitem{gugat2016optimal}
M.~Gugat, E.~Tr\'{e}lat, and E.~Zuazua.
\newblock Optimal {N}eumann control for the 1{D} wave equation: finite horizon,
  infinite horizon, boundary tracking terms and the turnpike property.
\newblock {\em Systems Control Lett.}, 90:61--70, 2016.

\bibitem{hernandez2019greedy}
V.~Hern\'{a}ndez-Santamar\'{\i}a, M.~Lazar, and E.~Zuazua.
\newblock Greedy optimal control for elliptic problems and its application to
  turnpike problems.
\newblock {\em Numer. Math.}, 141(2):455--493, 2019.

\bibitem{kress1989linear}
R.~Kress, V.~Maz'ya, and V.~Kozlov.
\newblock {\em Linear integral equations}, volume~82 of {\em Applied
  Mathematical Sciences}.
\newblock Springer, New York, third edition, 1989.

\bibitem{LANCE2019496}
G.~Lance, E.~Tr\'{e}lat, and E.~Zuazua.
\newblock Turnpike in optimal shape design.
\newblock {\em IFAC-PapersOnLine}, 52(16):496 -- 501, 2019.
\newblock 11th IFAC Symposium on Nonlinear Control Systems NOLCOS 2019.

\bibitem{gontran2020shape}
G.~Lance, E.~Tr\'{e}lat, and E.~Zuazua.
\newblock Shape turnpike for linear parabolic {PDE} models.
\newblock {\em Systems Control Lett.}, 142:104733, 2020.

\bibitem{larkin1999direct}
P.~A. Larkin and M.~Whalen.
\newblock Direct, near field acoustic testing.
\newblock Technical report, SAE technical paper, 1999.

\bibitem{li1995optimal}
X.~J. Li and J.~M. Yong.
\newblock {\em Optimal control theory for infinite-dimensional systems}.
\newblock Systems \& Control: Foundations \& Applications. Birkh\"{a}user
  Boston, Inc., Boston, MA, 1995.

\bibitem{lions1971optimal}
J.-L. Lions.
\newblock {\em Optimal control of systems governed by partial differential
  equations}.
\newblock Translated from the French by S. K. Mitter. Die Grundlehren der
  mathematischen Wissenschaften, Band 170. Springer-Verlag, New York-Berlin,
  1971.

\bibitem{lubbe1996clinical}
A.~L{\"u}bbe, C.~Bergemann, H.~Riess, F.~Schriever, P.~Reichardt, K.~Possinger,
  M.~Matthias, B.~D{\"o}rken, F.~Herrmann, R.~G{\"u}rtler, et~al.
\newblock Clinical experiences with magnetic drug targeting: a phase i study
  with 4'-epidoxorubicin in 14 patients with advanced solid tumors.
\newblock {\em Cancer research}, 56(20):4686--4693, 1996.

\bibitem{mckenzie1963turnpike}
L.~W. McKenzie.
\newblock Turnpike theorems for a generalized {L}eontief model.
\newblock {\em Econometrica}, pages 165--180, 1963.

\bibitem{niedermeyer2005electroencephalography}
E.~Niedermeyer and F.~da~Silva.
\newblock {\em Electroencephalography: basic principles, clinical applications,
  and related fields}.
\newblock Lippincott Williams \& Wilkins, 2005.

\bibitem{porretta2013long}
A.~Porretta and E.~Zuazua.
\newblock Long time versus steady state optimal control.
\newblock {\em SIAM J. Control Optim.}, 51(6):4242--4273, 2013.

\bibitem{porretta2016remarks}
A.~Porretta and E.~Zuazua.
\newblock Remarks on long time versus steady state optimal control.
\newblock In {\em Mathematical paradigms of climate science}, volume~15 of {\em
  Springer INdAM Ser.}, pages 67--89. Springer, [Cham], 2016.

\bibitem{RS-DP}
X.~Ros-Oton and J.~Serra.
\newblock The extremal solution for the fractional {L}aplacian.
\newblock {\em Calc. Var. Partial Differential Equations}, 50(3-4):723--750,
  2014.

\bibitem{sakamoto2019turnpike}
N.~{Sakamoto}, D.~{Pighin}, and E.~{Zuazua}.
\newblock The turnpike property in nonlinear optimal control — a geometric
  approach.
\newblock In {\em 2019 IEEE 58th Conference on Decision and Control (CDC)},
  pages 2422--2427, 2019.

\bibitem{trelat2018integral}
E.~Tr\'{e}lat and C.~Zhang.
\newblock Integral and measure-turnpike properties for infinite-dimensional
  optimal control systems.
\newblock {\em Math. Control Signals Systems}, 30(1):Art. 3, 34, 2018.

\bibitem{trelat2018steady}
E.~Tr\'{e}lat, C.~Zhang, and E.~Zuazua.
\newblock Steady-state and periodic exponential turnpike property for optimal
  control problems in {H}ilbert spaces.
\newblock {\em SIAM J. Control Optim.}, 56(2):1222--1252, 2018.

\bibitem{trelat2015turnpike}
E.~Tr\'{e}lat and E.~Zuazua.
\newblock The turnpike property in finite-dimensional nonlinear optimal
  control.
\newblock {\em J. Differential Equations}, 258(1):81--114, 2015.

\bibitem{tucsnak2009observation}
M.~Tucsnak and G.~Weiss.
\newblock {\em Observation and control for operator semigroups}.
\newblock Birkh\"{a}user Advanced Texts: Basler Lehrb\"{u}cher. [Birkh\"{a}user
  Advanced Texts: Basel Textbooks]. Birkh\"{a}user Verlag, Basel, 2009.

\bibitem{tucsnak2014well}
M.~Tucsnak and G.~Weiss.
\newblock Well-posed systems---the {LTI} case and beyond.
\newblock {\em Automatica J. IFAC}, 50(7):1757--1779, 2014.

\bibitem{unsworth2005new}
M.~Unsworth.
\newblock New developments in conventional hydrocarbon exploration with
  electromagnetic methods.
\newblock {\em CSEG Recorder}, 30(4):34--38, 2005.

\bibitem{War}
M.~Warma.
\newblock The fractional relative capacity and the fractional {L}aplacian with
  {N}eumann and {R}obin boundary conditions on open sets.
\newblock {\em Potential Anal.}, 42(2):499--547, 2015.

\bibitem{War-ACE}
M.~Warma.
\newblock Approximate controllability from the exterior of space-time
  fractional diffusive equations.
\newblock {\em SIAM J. Control Optim.}, 57(3):2037--2063, 2019.

\bibitem{CWeiss_BvBWaanders_HAntil_2018a}
C.~Weiss, B.~Waanders, and H.~Antil.
\newblock Fractional operators applied to geophysical electromagnetics.
\newblock {\em Geophysical Journal International}, 220(2):1242--1259, 2020.

\bibitem{williams1974electroencephalography}
R.~Williams, I.~Karacan, and C.~Hursch.
\newblock {\em Electroencephalography (EEG) of human sleep: clinical
  applications}.
\newblock John Wiley \& Sons, 1974.

\bibitem{zamorano2018turnpike}
S.~Zamorano.
\newblock Turnpike property for two-dimensional {N}avier-{S}tokes equations.
\newblock {\em J. Math. Fluid Mech.}, 20(3):869--888, 2018.

\bibitem{zaslavski2006turnpike}
A.~J. Zaslavski.
\newblock {\em Turnpike properties in the calculus of variations and optimal
  control}, volume~80 of {\em Nonconvex Optimization and its Applications}.
\newblock Springer, New York, 2006.

\bibitem{zaslavski2019turnpike}
A.~J. Zaslavski.
\newblock {\em Turnpike conditions in infinite dimensional optimal control},
  volume 148 of {\em Springer Optimization and Its Applications}.
\newblock Springer, Cham, 2019.

\bibitem{zeidler1990nonlinear}
E.~Zeidler.
\newblock {\em Nonlinear functional analysis and its applications. {II}/{A}}.
\newblock Springer-Verlag, New York, 1990.
\newblock Linear monotone operators, Translated from the German by the author
  and Leo F. Boron.

\bibitem{ZUAZUA2017199}
E.~Zuazua.
\newblock Large time control and turnpike properties for wave equations.
\newblock {\em Annual Reviews in Control}, 44:199 -- 210, 2017.

\end{thebibliography}

\end{document}